\documentclass[11pt, a4paper]{article}

\usepackage{amsmath, amssymb, amsthm, mathrsfs, latexsym}

\usepackage{graphicx}
\usepackage{tikz}
\usetikzlibrary{calc}
\usepackage{booktabs, caption}
\usepackage{enumerate}
\usepackage{authblk} 
\usepackage{cite}    
\usepackage{CJK}    

\usepackage{xcolor}        
\usepackage[normalem]{ulem}

\usepackage[colorlinks, linkcolor=blue, anchorcolor=green, citecolor=magenta]{hyperref}
\usepackage{cleveref}

\definecolor{Blue}{rgb}{0,0,1}
\definecolor{Red}{rgb}{1,0,0}
\definecolor{DarkGreen}{rgb}{0,0.6,0}
\definecolor{DarkYellow}{rgb}{1,1,0.2}
\definecolor{DarkPurple}{rgb}{.6,0,1}

\long\def\delete#1{}

\crefformat{section}{\S#2#1#3}
\crefformat{subsection}{\S#2#1#3}
\crefformat{subsubsection}{\S#2#1#3}
\crefrangeformat{section}{\S\S#3#1#4 to~#5#2#6}
\crefmultiformat{section}{\S\S#2#1#3}{ and~#2#1#3}{, #2#1#3}{ and~#2#1#3}
\def\ma{\mathcal{A}}
\def\mb{\mathcal{B}}
\def\mc{\mathcal{C}}

\def\mf{\mathcal{F}}
\def\mg{\mathcal{G}}
\def\mh{\mathcal{H}}
\def\mi{\mathcal{I}}

\def\mm{\mathcal{M}}

\def\mt{\mathcal{T}}

\def\mw{\mathcal{W}}

\def\bs{\setminus}

\numberwithin{equation}{section}

\newtheoremstyle{mystyle}
{3pt}
{3pt}
{\itshape}
{}
{\bfseries}
{.}
{0.5em}
{\thmname{#1}\thmnumber{ #2}\thmnote{ {\normalfont(#3)}}}

\theoremstyle{mystyle}
\newtheorem{thm}{Theorem}[section]

\newtheorem{lem}[thm]{Lemma}

\newtheorem{cor}[thm]{Corollary}
\newtheorem{pr1}[thm]{Proposition}

\theoremstyle{remark}

\newtheorem{con}{\bf Construction}
\newtheorem{as}{\bf Assumption}

\begin{document}
	
		\setcounter{page}{1}
	\renewcommand{\thefootnote}{}
	\newcommand{\remark}{\vspace{2ex}\noindent{\bf Remark.\quad}}
	\renewcommand{\abovewithdelims}[2]{%
		\genfrac{[}{]}{0pt}{}{#1}{#2}}

	
	\def\qed{\hfill$\Box$\vspace{11pt}}
	
	\title {\bf   Non-trivial cross-$t$-intersecting families for vector spaces with the maximum sum of sizes}

	\author{Dehai Liu\thanks{E-mail: \texttt{liudehai@mail.bnu.edu.cn}}\   \textsuperscript{a}}
	\author{Kaishun Wang\thanks{ E-mail: \texttt{wangks@bnu.edu.cn}}\ \textsuperscript{a}}
	\author{Tian Yao\thanks{Corresponding author. E-mail: \texttt{tyao@hist.edu.cn}}\ \textsuperscript{b}}
	\affil{ \textsuperscript{a} Laboratory of Mathematics and Complex Systems (Ministry of Education), School of
		Mathematical Sciences, Beijing Normal University, Beijing 100875, China}
	
	\affil{ \textsuperscript{b} School of Mathematical Sciences, Henan Institute of Science and Technology, Xinxiang 453003, China}
	\date{}
	
	\openup 0.5\jot
	\maketitle

\begin{abstract}
	
	 Let $V$ be an $n$-dimensional vector space over a finite field. Suppose that  $\mathcal{F}$ and $\mathcal{G}$ are non-empty families of $k$-subspaces and
	 $\ell$-subspaces of $V$, respectively.
	 They are said to be cross-$t$-intersecting if $\dim(F\cap G)\geq t$ for any $F\in\mathcal{F}$ and $G\in \mathcal{G}$, and  are  further called non-trivial if $\dim(\cap_{F\in\mathcal{F}}F)<t$ and $\dim(\cap_{G\in\mathcal{G}}G)<t$.  In this paper, we characterize the  non-trivial cross-$t$-intersecting families with the maximum sum of sizes.  When $t=1$, our result serves as the $q$-analog of the theorems in \cite{F-2024-15,FT-1998-247}.

	\vspace{2mm}
	\noindent{\bf Key words:}\ cross-$t$-intersecting family;\ vector space;\ $t$-covering number
	
	\
	
	\noindent{\bf AMS classification:} \   05D05,\ 05A30
	
\end{abstract}

\section{Introduction}

Intersection problems have long been a central  topic in extremal combinatorics. A family of $k$-subsets of an $n$-set is called \textit{$t$-intersecting} if any two members share at least $t$ elements. The Erd\H{o}s--Ko--Rado theorem \cite{EKR-1961-313, F-1978-365, W-1984-247} states that if $n > (t+1)(k-t+1)$, then every  largest $t$-intersecting family consists of all $k$-subsets containing a fixed $t$-subset. Furthermore, the maximum-sized $t$-intersecting families whose  members have no common $t$-subset were characterized by the Hilton--Milner--Frankl theorem \cite{AK-1996-121, F-1978-146, HM-1967-369}. These classical results for finite sets have natural extensions to other mathematical structures, such as vector spaces. We refer the reader to \cite{BBCFMPT-2010-71, CLWZ-2022-1823, FW-1986-228, H-1975-1, T-2006-903, WXY-2024-220, WXZ-2023-105766} for more details.

The notion of a $t$-intersecting family can be generalized.
Two non-empty families   $\mathcal{F}$ and $\mathcal{G}$, consisting    respectively of $k$-subsets and
$\ell$-subsets of the same $n$-set,   are said to be  \textit{cross-$t$-intersecting} if $\left|F\cap G\right|\geq t$ for any $F\in\mathcal{F}$ and $G\in\mathcal{G}$. Clearly, a family $\mathcal{F}$ is $t$-intersecting if and only if $\mathcal{F}$  is cross-$t$-intersecting with itself. 
 In \cite{FT-1992-87, HM-1967-369,  WZ-2013-129}, the cross-$t$-intersecting families with the maximum sum of  sizes were characterized. Apart from a few  parameters, the extremal families $\mf$ and $\mg$  satisfy $\left|\cap_{F\in\mf}F\right|\geq t$ or $\left|\cap_{G\in\mg} G\right|\geq t$. The problem of determining  the cross-$t$-intersecting families $\mf$ and $\mg$ maximizing $|\mf| + |\mg|$, subject to $\left|\cap_{F\in\mf} F\right| < t$ and $\left|\cap_{G\in\mg} G\right| < t$, was resolved  for $t=1$ in \cite{F-2024-15, FT-1998-247}. To the best of our knowledge, this problem remains open for $t \geq 2$.

From now on, let $q$ be a prime power, and  $V$  an $n$-dimensional vector space over the finite field $\mathbb{F}_{q}$. 
Denote the family of all $k$-dimensional subspaces of $V$ by ${V\brack k}$.
Two non-empty families $\mathcal{F}\subseteq {V\brack k}$ and $\mathcal{G}\subseteq{V\brack \ell}$ are called \textit{cross-$t$-intersecting} if $\dim(F\cap G)\geq t$ for any $F\in\mathcal{F}$ and $G\in\mathcal{G}$. Wang and Zhang \cite{WZ-2013-129} completely determined the extremal cross-$t$-intersecting families with the maximum sum of  sizes. We say that   cross-$t$-intersecting families $\mathcal{F}$ and $\mathcal{G}$ are \textit{non-trivial} if $\dim(\cap_{F\in\mf} F)<t$ and $\dim(\cap_{G\in\mg} G)<t$.  Notably, an alternative notion of non-triviality   was  also   considered in \cite{CLLW-2023-105688, WL-2026-106127}.

  In this paper, we characterize  the non-trivial cross-$t$-intersecting families attaining the maximum sum of sizes.  
   Before presenting our  result, we introduce some  families. 
For subspaces $A$,  $B$ and $C$ of $V$, write 
$$\mh(A, B;\ell, t)=\left\{H\in{V\brack \ell}: \dim(H\cap A)\geq t,\ \dim(H\cap B)\geq t\right\},$$
$$\mm(C;\ell,t)=\left\{M\in{V\brack \ell}: \dim(M\cap C)\geq t+1\right\}.$$	
\begin{con}\label{2604171}
	Let  $n$, $k$, $\ell$ and $t$ be positive integers with $\ell \geq k\geq t+1$ and $n\geq k+\ell-t+1$. Suppose $A, B\in{V\brack k}$ with $\dim(A\cap B)=t-1$. 
	Then
	$$\mf_{1}=\left\{A, B\right\} \ \ \textnormal{and}  \ \  \mg_{1}=\mh(A, B;\ell, t)$$
	are non-trivial cross-$t$-intersecting families.
\end{con}	

\begin{con}\label{2604172}
	Let  $n$, $k$, $\ell$ and $t$ be positive integers with  $\min\{k,\ell\}\geq t+1$ and $n\geq k+\ell-t+1$. For $C\in{V\brack k+1}$, families
   	$$\mf_{2}={C\brack k}\ \ \textnormal{and}  \ \  \mg_{2}=\mm(C; \ell,t)$$
  are non-trivial cross-$t$-intersecting families. For $D\in{V\brack \ell+1}$, families 
  	$$\mf_{3}=\mm(D;k,t)\ \ \textnormal{and}  \ \  \mg_{3}={D\brack \ell}$$
  	are also non-trivial cross-$t$-intersecting families.
\end{con}

By symmetry, we may assume that $\ell \geq k$. Note that there are no non-trivial cross-$t$-intersecting families if $k \leq t$, and the families ${V\brack k}$ and ${V\brack\ell}$ are cross-$t$-intersecting  if $n\leq k+\ell-t$. Therefore, we restrict  attention to the case where $k \geq t+1$ and $n\geq k+\ell-t+1$.

\begin{thm}\label{1} Let $n$,  $k$,  $\ell$ and $t$ be positive integers.
	Suppose that \(\mathcal F\subseteq {V\brack k}\) and
	\(\mathcal G\subseteq {V\brack \ell}\) are non-trivial
	cross-$t$-intersecting families with the maximum $\left|\mf\right|+\left|\mg\right|$. Then 
	 $(\mathcal F,\mathcal G)$ is
	isomorphic to $(\mf_{1},\mg_{1})$, $(\mg_{1},\mf_{1})$, $(\mf_{2},\mg_{2})$ or $(\mf_{3},\mg_{3})$ under some mild lower bounds of $n$,
	as detailed in the five cases of Table~\ref{tab:classification}.
\end{thm}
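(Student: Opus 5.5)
The plan is to run the standard $t$-covering-number argument in its $q$-analog form. For a family $\mf\subseteq{V\brack k}$, let $\tau_t(\mf)$ denote the least $\dim T$ over subspaces $T$ with $\dim(T\cap F)\ge t$ for all $F\in\mf$. If $\mf,\mg$ are non-trivial cross-$t$-intersecting, then every $G\in\mg$ is a $t$-cover of $\mf$ of dimension $\ell$, so $t+1\le\tau_t(\mf)\le \ell$. Likewise $t+1\le \tau_t(\mg)\le k$. The first step is the usual counting bound: if $\tau_t(\mf)=s$, then $|\mg|\le c_s\,{n-t-1\brack \ell-t-1}$-type bounds. Concretely, each $G\in\mg$ must $t$-intersect a fixed $F_0\in\mf$. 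Pick $T\subseteq F_0\cap G$ of dimension $t$. Since $T$ is not a $t$-cover of $\mf$, some $F_1$ has $\dim(F_1\cap T)<t$, and then $G$ must meet $F_1$ in a further direction. Iterating this gives $|\mg|\le {k\brack t}\big(\prod\cdots\big){n-s\brack \ell-s}$. The estimate becomes small once $s$ is large.

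The second step splits on the pair $(\tau_t(\mf),\tau_t(\mg))$. If both covering numbers are at least $t+2$, the bounds above give $|\mf|+|\mg|$ of order $q^{(k-t-2)(n-k)}+q^{(\ell-t-2)(n-\ell)}$ times constants. This is beaten by the constructions once $n$ exceeds the thresholds in Table~\ref{tab:classification}. So we may assume one family, say $\mf$ (or $\mg$ by symmetry, which is where $(\mg_1,\mf_1)$ and $(\mf_3,\mg_3)$ enter), has $\tau_t=t+1$. Let $\mathcal{T}$ be the set of $(t+1)$-dimensional $t$-covers of $\mf$. I then analyse the structure of $\mathcal{T}$, in analogy with the Hilton--Milner/Frankl analysis:
\begin{itemize}
\item if $\mathcal{T}$ spans a $(t+2)$-space or otherwise has large span, $\mf$ is forced to be tiny;
\item if $\mf$ is large, all members of $\mathcal{T}$ lie inside a common $(k+1)$-space $C$, which leads toward $\mf\subseteq{C\brack k}$.
\end{itemize}

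In each configuration, maximality forces $\mg$ to be the full family of $\ell$-spaces that $t$-intersect every member of $\mf$. So $|\mf|+|\mg|$ is a function $f(\mf)$, and we compare candidates. If $\mf=\{A,B\}$ with $\dim(A\cap B)=t-1$, we get $\mf_1,\mg_1$. Adding members to $\mf$ shrinks $\mg$, and the optimal end of that trade-off is either two members or all of ${C\brack k}$, giving $(\mf_2,\mg_2)$. Pairs with $\dim(A\cap B)<t-1$ give a strictly smaller $\mh$. Comparing $2+|\mh(A,B;\ell,t)|$ against ${k+1\brack 1}+|\mm(C;\ell,t)|$ and the swapped versions via $q$-binomial estimates produces the five regimes of the table.

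The main obstacle is the intermediate regime, where $\mf$ has more than two members but is not contained in a single $(k+1)$-space. There I need a sharp inequality showing $|\mf|+|\mg|$ is strictly below both extremal values, uniformly enough to hold under only mild bounds on $n$. My first attempt would be to fix two members $A,B\in\mf$ with $\dim(A\cap B)$ minimal. The goal is then to show that each additional $F\in\mf$ not inside $A+B$ cuts $\mh(A,B;\ell,t)$ by more than one. For the case $A+B$ of dimension $k+1$, the aim is to show that the gain from adding $F$ is outweighed by the loss, with convexity in the number of members pushing the optimum to an endpoint.
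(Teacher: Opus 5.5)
\textbf{There is a genuine gap.} Your framework matches the paper's: covering-number bounds, a split on $(\tau_t(\mf),\tau_t(\mg))$, then structural analysis once $\tau_t(\mf)=t+1$. But the step that carries the theorem is left open, namely excluding the intermediate configurations, and the heuristic you offer for it would not work.

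Write $\mg^*(\mf)$ for the family of all $\ell$-spaces $t$-intersecting every member of $\mf$. The loss $|\mh(A,B;\ell,t)|-|\mg^*(\mf)|$ is not additive over the members of $\mf$, because the $\ell$-spaces excluded by different $F$'s overlap. So a statement like ``each additional $F$ cuts $\mh(A,B;\ell,t)$ by more than one'' cannot be iterated. Marginal costs can even vanish: ${C\brack k}\setminus\{F\}$ already has no common $t$-space, so $\mg^*({C\brack k}\setminus\{F\})=\mm(C;\ell,t)$, and adding the last member of ${C\brack k}$ costs nothing. Nothing in your sketch justifies the ``convexity'' that would push the optimum to an endpoint. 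It also does not treat further members lying inside $A+B$ when $\dim(A+B)\ge k+2$.

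Your span-based dichotomy does not separate the cases either. For $k=t+1$, $\mt_t({C\brack t+1})={C\brack t+1}$ spans the $(t+2)$-space $C$, yet ${C\brack t+1}$ is the extremal family in case (iv).

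The paper closes this gap with three concrete ingredients, which are what your trade-off argument is missing.
\begin{enumerate}
\item \emph{A dichotomy on whether $\mf$ is $(k-1)$-intersecting.} If it is, the classification of maximal cliques of the Grassmann graph, together with $\dim(\cap_{F\in\mf}F)<t$, gives $\mf\subseteq{C\brack k}$ and $\mg\subseteq\mm(C;\ell,t)$ (Proposition~\ref{2605201}). So the common $(k+1)$-space comes from the intersection pattern of $\mf$, not from the span of $\mt$.
\item \emph{Counting the minimum covers when $\mf$ is not $(k-1)$-intersecting.} Fix $A,B\in\mf$ with $\dim(A\cap B)\le k-2$. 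Every minimum $t$-cover is a $(t+1)$-space $t$-intersecting both $A$ and $B$, which confines $\mt_t(\mf)$ to an explicitly countable family. For $t=1$, $A\cap B=0$ and $|\mf|\ge 3$, a third member is a $k$-dimensional $1$-cover of $\mt_1(\mf)$ different from $A,B$. By Lemma~\ref{2604212} this forces $|\mt_1(\mf)|\le{k-1\brack 1}({k\brack 1}+q)$.
\item \emph{The refined bound of Proposition~\ref{2604156}.} Members of $\mg$ containing no minimum $t$-cover of $\mf$ contribute only terms of order ${n-t-2\brack \ell-t-2}$. Hence $|\mg|$ is essentially at most $|\mt_t(\mf)|{n-t-1\brack\ell-t-1}$. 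It then suffices that $|\mt_t(\mf)|$ be a fixed fraction below ${k\brack 1}^2$ (for $t=1$) or ${k+1\brack t+1}$ (for $t\ge 2$).
\end{enumerate}

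Finally, case (iii) cannot come from covering-number estimates at all. There $\tau_t(\mf)=\tau_t(\mg)=t+1$ is forced, and the resulting bound $2(q+1){t+1\brack 1}$ exceeds $m_1(n,t+1,t+1,t)=2{t+2\brack 1}$. The paper needs the separate combinatorial argument of Lemma~\ref{2604131} to show that one of the two families is $t$-intersecting.
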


\begin{center}
	\captionof{table}{Extremal non-trivial cross-$t$-intersecting families}
	\label{tab:classification}
	\renewcommand{\arraystretch}{1.15}
	\begin{tabular}{|c|c|c|c|c|}
		\hline
		Case & $t$ & $k,\ell$ & Lower bound of $n$ & $(\mathcal F,\mathcal G)$ \\
		\hline
		(i)
		& $1$
		& $\ell=k\geq 3$
		& $2k+3\delta_{2,q}+\delta_{3,q}$
		& $(\mf_{1},\mg_{1})$ or $(\mg_{1},\mf_{1})$\\
		\hline
		(ii)
		& $1$
		& $\ell>k\geq 2$
		& $ k+\ell+1+2\delta_{2,q}-\delta_{2,q}\delta_{2,k}$
		& $(\mf_{1},\mg_{1})$ \\
		\hline
		(iii)
		& $\geq 1$
		& $\ell=k=t+1$
		& $ t+2$
		& $(\mf_{2},\mg_{2})$ \\
		\hline
		(iv)
		& $\geq 2$
		& $\ell>k=t+1$
		& $\ell+t+2$
		& $(\mf_{2},\mg_{2})$ \\
		\hline
		(v)
		& $\geq 2$
		& $\ell\geq k\geq t+2$
		& $ k+\ell-t+1+3\delta_{2,q}+\delta_{3,q}+\delta_{2,q}\delta_{2,t}$
		& $(\mf_{2},\mg_{2})$ or $(\mf_{3},\mg_{3})$ \\
		\hline
	\end{tabular}
\end{center}

Both $(\mf_{2},\mg_{2})$ and $(\mf_{3},\mg_{3})$
in  Theorem \ref{1} (v) can be extremal structures.  Suppose  that $t=2$, $k=4$ and $\ell=5$. Then $(\mf_{2},\mg_{2})$ and $(\mf_{3},\mg_{3})$  attain the larger sum of  sizes for $n=9$ and $n=8$, respectively.

The rest of this paper is organized as follows. In Section \ref{2606015}, we establish some auxiliary results.   Theorem \ref{1} is  proved in Section \ref{2606016}.  We discuss cross-$t$-intersecting families under a  condition weaker than non-triviality  in Section \ref{2606019}.   For readability, the proofs of technical inequalities are deferred to Section \ref{26060110}.

\section{Preliminaries}\label{2606015}

This section collects several auxiliary results that will be used in the proofs of the main theorem.  Recall that for integers $a \geq b \geq 0$, the Gaussian binomial coefficient is defined as
$$
{a\brack b}_q
=
\prod_{i=0}^{b-1}\frac{q^{a-i}-1}{q^{b-i}-1}.
$$
 By convention, set ${a\brack b}_{q}=0$ if $b<0$ or $a<b$. In what follows,  write ${a\brack b}$ for short.

For  a family $\mf$ and a subspace $H$,  denote the family  of all members in $\mf$ containing $H$ by $\mf_{H}$. From now on, lowercase letters   denote positive integers, unless otherwise stated.

\begin{lem}[{\normalfont[\citen{WL-2026-106127}, Lemma 2.2]}]\label{2604155}
	Let  $\min\{k,  \ell\}\geq t$ and $n\geq k+\ell-t+1$. Suppose that $\mf\subseteq {V\brack k}$ and $G\in {V\brack \ell}$ satisfy $\dim(F\cap G)\geq t$ for any $F\in\mf$. If $H$ is a subspace of $V$ with $\dim(H\cap G)<t$, then there exists a subspace $U$ of $V$ with $H\subseteq U$ and $\dim(U)=\dim(H)-\dim(H\cap G)+t$ such that  
	$\left|\mf_{H}\right|\leq {\ell-\dim(H\cap G)\brack t-\dim(H\cap G)}\left|\mf_{U}\right|$.
\end{lem}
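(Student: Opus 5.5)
The plan is a direct covering (pigeonhole) argument. Put $s=\dim(H\cap G)$, so $s<t$. I would show that $\mf_{H}$ is covered by the families $\mf_{U}$ for at most ${\ell-s\brack t-s}$ subspaces $U\supseteq H$, each of dimension $\dim(H)-s+t$. Choosing the $U$ with the largest $\left|\mf_{U}\right|$ then gives the bound.

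\emph{Step 1: every member of $\mf_{H}$ contains a suitable $t$-subspace of $G$.} Take $F\in\mf_{H}$. Since $H\subseteq F$, we have $H\cap G\subseteq F\cap G$. By hypothesis $\dim(F\cap G)\geq t>s$. So we can extend $H\cap G$ inside $F\cap G$ to a $t$-dimensional subspace $W$ with
$$H\cap G\subseteq W\subseteq F\cap G.$$
Let $\mw$ denote the family of all $t$-subspaces $W$ of $G$ with $H\cap G\subseteq W$. Passing to the quotient $G/(H\cap G)$, which has dimension $\ell-s$, shows that $|\mw|={\ell-s\brack t-s}$.

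\emph{Step 2: the dimension of $U_W$.} For $W\in\mw$, set $U_W=H+W$. Since $W\subseteq G$, we have $H\cap W=H\cap G\cap W$. Since $H\cap G\subseteq W$, this equals $H\cap G$. Therefore
$$\dim(U_W)=\dim(H)+t-s,$$
as required. Moreover, every $F\in\mf_{H}$ satisfies $F\supseteq H+W=U_W$ for the $W$ chosen in Step 1. Hence
$$\mf_{H}\subseteq\bigcup_{W\in\mw}\mf_{U_W}.$$

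\emph{Step 3: conclude.} Taking sizes in the inclusion of Step 2 gives
$$\left|\mf_{H}\right|\leq\sum_{W\in\mw}\left|\mf_{U_W}\right|\leq{\ell-s\brack t-s}\max_{W\in\mw}\left|\mf_{U_W}\right|.$$
Let $U=U_W$ for a $W$ attaining this maximum. Then $U$ contains $H$, has dimension $\dim(H)-\dim(H\cap G)+t$, and satisfies the claimed inequality.

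No step is a real obstacle. The points that need care are the dimension identity $H\cap W=H\cap G$ and the count of $\mw$. The hypothesis $n\geq k+\ell-t+1$ only guarantees that the ambient setting is non-degenerate and is not used in the argument. If $\mf_{H}$ is empty the inequality holds trivially, for instance with $U=H+W$ for any $W\in\mw$.
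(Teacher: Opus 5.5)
Your proof is correct. Covering $\mf_{H}$ by the families $\mf_{H+W}$, with $W$ running over the ${\ell-s\brack t-s}$ $t$-subspaces of $G$ that contain $H\cap G$, and then applying pigeonhole is the standard argument for this lemma. The paper gives no proof of its own and simply imports the result from \cite{WL-2026-106127}. Your two checks are both accurate: the identity $H\cap W=H\cap G$ gives the right dimension for $U$, and the hypothesis $n\geq k+\ell-t+1$ plays no role.
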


It is worth noting an immediate implication of Lemma \ref{2604155}. By
$$\frac{q^{\ell-\dim(H\cap G)}-1}{q^{t-\dim(H\cap G)}-1}\leq\frac{q^{\ell-\dim(H\cap G)-1}-1}{q^{t-\dim(H\cap G)-1}-1} \leq \cdots\leq \frac{q^{\ell-t+1}-1}{q-1},$$
we have ${\ell-\dim(H\cap G)\brack t-\dim(H\cap G)}\leq {\ell-t+1\brack 1}^{t-\dim(H\cap G)}={\ell-t+1\brack 1}^{\dim(U)-\dim(H)}$.
It follows that  $$\left|\mf_{H}\right|\leq {\ell-t+1\brack 1}^{\dim(U)-\dim(H)}\left|\mf_{U}\right|.$$
Indeed, we shall often use the above inequality in place of $\left|\mf_{H}\right|\leq {\ell-\dim(H\cap G)\brack t-\dim(H\cap G)}\left|\mf_{U}\right|$.

 For each family $\mathcal F$ of subspaces of dimension at least $t$, a \textit{$t$-cover} of $\mathcal F$ is a subspace $T$ satisfying $\dim(T\cap F)\geq t$ for any $F\in\mathcal{F}$, and the \textit{$t$-covering number} $\tau_t(\mathcal F)$ of $\mf$ is the minimum  dimension of a $t$-cover.

\begin{pr1}[{\normalfont[\citen{WL-2026-106127}, Proposition 2.3]}]\label{2604154}
	Let  $\min\{k,  \ell\}\geq t$ and $n\geq k+\ell-t+1$. If $\mf\subseteq {V\brack k}$ and $\mg\subseteq {V\brack \ell}$ are cross-$t$-intersecting, then
	$$\left|\mf\right|\leq {\tau_{t}(\mf)\brack t}{\ell-t+1\brack 1}^{\tau_{t}(\mg)-t}{n-\tau_{t}(\mg)\brack k-\tau_{t}(\mg)}.$$ 
\end{pr1}

The next proposition refines the estimate for the sizes of cross-$t$-intersecting families.

\begin{pr1}\label{2604156}
	Let  $\min\{k,  \ell\}\geq t$ and $n\geq k+\ell-t+1$. Suppose that $\mf\subseteq {V\brack k}$ and $\mg\subseteq {V\brack \ell}$ are cross-$t$-intersecting. Set 
	$\mathcal{M}=\left\{ M\in \mg: T\nsubseteq M \ \textnormal{for any}\ T\in \mt_{t}(\mf)\right\}$.
	Then 
	$$\left|\mm\right|\leq {\tau_{t}(\mg)\brack t}{k-t+1\brack 1}^{\tau_{t}(\mf)-t+1}{n-\tau_{t}(\mf)-1\brack \ell-\tau_{t}(\mf)-1}.$$
\end{pr1}
\begin{proof}
	If $\mm$ is empty, then the desired result is clear. Next, we assume that $\mm$ is non-empty.  Fix $T\in\mt_{t}(\mg)$. Then $\mm=\bigcup_{H\in{T\brack t}}\mm_{H}$. There exists $H_{1}\in {T\brack t}$ such that $\left|\mm_{H}\right|\leq \left|\mm_{H_{1}}\right|$ for any $H\in{T\brack t}$. It is sufficient to show $\left|\mm_{H_{1}}\right|\leq {k-t+1\brack 1}^{\tau_{t}(\mf)-t+1}{n-\tau_{t}(\mf)-1\brack \ell-\tau_{t}(\mf)-1}$.

	We claim that there exists a subspace $H_{u}$ of $V$ with $\dim(H_{u})\geq \tau_{t}(\mf)$ such that
	\begin{equation}\label{2604158}
		\left|\mm_{H_{1}}\right|\leq {k-t+1\brack 1}^{\dim(H_{u})-t}\left|\mm_{H_{u}}\right|. 
	\end{equation}
	If $\dim(H_{1})=\tau_{t}(\mf)$, then $H_{1}$ is our required subspace. If $\dim(H_{1})<\tau_{t}(\mf)$, then   $\dim(H_{1}\cap F_{1})<t$ for some  $F_{1}\in\mf$. By Lemma \ref{2604155}, there exists $H_{2}$ with $\dim(H_{2})> \dim(H_{1})$ such that
	$\left|\mm_{H_{1}}\right|\leq {k-t+1\brack 1}^{\dim(H_{2})-\dim(H_{1})}\left|\mm_{H_{2}}\right|$.
	Repeat the process above, and we finally get a chain of subspaces $H_{1}, H_{2}, \ldots, H_{u}$ with $$\dim(H_{1})<\dim(H_{2})<\cdots<\dim(H_{u-1})< \tau_{t}(\mf)\leq \dim(H_{u})$$  such that
	$$\left|\mm_{H_{i}}\right|\leq {k-t+1\brack 1}^{\dim(H_{i+1})-\dim(H_{i})}\left|\mm_{H_{i+1}}\right|,\ 1\leq i\leq u-1. $$
	It follows that this claim holds.

	Suppose  $\dim(H_{u})\geq \tau_{t}(\mf)+1$. Since  $\mm$ is non-empty, we know $\dim(H_{u})\leq \ell$.  This combining with (\ref{2604158}) and Lemma \ref{2604157} implies 
	$$\left|\mm_{H_{1}}\right|\leq {k-t+1\brack 1}^{\dim(H_{u})-t}{n-\dim(H_{u})\brack \ell-\dim(H_{u})}\leq {k-t+1\brack 1}^{\tau_{t}(\mf)-t+1}{n-\tau_{t}(\mf)-1\brack \ell-\tau_{t}(\mf)-1}.$$
	as asserted.

	Suppose $\dim(H_{u})= \tau_{t}(\mf)$. Since  $\mm$ is non-empty, we know $\mm_{H_{u}}$ is also non-empty.  Recall the definition of $\mm$. We conclude that $H_{u}$ is not a $t$-cover of $\mf$. This implies $\dim(H_{u}\cap F_{u})<t$ for some $F_{u}\in \mf$.  By Lemma \ref{2604155}, there exits subspace $H_{u+1}$ of $V$ with $\dim(H_{u+1})\geq \dim(H_{u})+1=\tau_{t}(\mf)+1$ such that $\left|\mm_{H_{u}}\right|\leq {k-t+1\brack 1}^{\dim(H_{u+1})-\dim(H_{u})}\left|\mm_{H_{u+1}}\right|$. It follows from (\ref{2604158}) that $$\left|\mm_{H_{1}}\right|\leq {k-t+1\brack 1}^{\dim(H_{u+1})-t}\left|\mm_{H_{u+1}}\right|\leq {k-t+1\brack 1}^{\dim(H_{u+1})-t}{n-\dim(H_{u+1})\brack \ell-\dim(H_{u+1})}. $$ 
	Observe that $\dim(H_{u+1})\leq \ell$ due to $\mm\neq \emptyset$. This together with Lemma \ref{2604157} yields $\left|\mm_{H_{1}}\right|\leq {k-t+1\brack 1}^{\tau_{t}(\mf)-t+1}{n-\tau_{t}(\mf)-1\brack \ell-\tau_{t}(\mf)-1}$. This finishes the proof.
\end{proof}

We proceed by stating two corollaries that collect several upper bounds used repeatedly throughout the paper. From now on, write 
\begin{align}
	f_{1}(n,k,\ell, t, x, y)
	&={x\brack t}{\ell-t+1\brack 1}^{y-t}{n-y\brack k-y}
	+{y\brack t}{k-t+1\brack 1}^{x-t}{n-x\brack \ell-x}. \label{2604193}\\
	f_{2}(n,k,\ell, t, x, y)
	&={\ell\brack t}{\ell-t+1\brack 1}^{y-t}{n-y\brack k-y}
	+{k\brack t}{k-t+1\brack 1}^{x-t}{n-x\brack \ell-x}. \label{2605193}\\
	f_{3}(n,k,\ell, t, x,y,z)
	&= {x\brack t}{\ell-t+1\brack 1}^{y-t}{n-y\brack k-y}
	+{k\brack t}{k-t+1\brack 1}^{x-t+1}{n-x-1\brack \ell-x-1}\label{2605057}\\
	&\quad\ +z{n-x\brack \ell-x}. \notag
\end{align}

\begin{cor}\label{2605191}
	Let  $\min\{k,  \ell\}\geq t$ and $n\geq k+\ell-t+1$. If $\mf\subseteq {V\brack k}$ and $\mg\subseteq {V\brack \ell}$ are cross-$t$-intersecting, then
	$\left|\mf\right|+\left|\mg\right|\leq f_{1}(n,k,\ell, t, \tau_{t}(\mf), \tau_{t}(\mg))$.  
	Moreover, if  $\tau_{t}(\mf)\geq x\geq t$ and $\tau_{t}(\mg)\geq y\geq t$, then  $\left|\mf\right|+\left|\mg\right|\leq f_{2}(n,k,\ell, t, x, y)$.  
\end{cor}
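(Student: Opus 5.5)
The first inequality follows by applying Proposition \ref{2604154} twice. Applied to $(\mf,\mg)$ with the roles as stated, it gives
$$|\mf|\leq {\tau_t(\mf)\brack t}{\ell-t+1\brack 1}^{\tau_t(\mg)-t}{n-\tau_t(\mg)\brack k-\tau_t(\mg)}.$$
Cross-$t$-intersection is symmetric, and the hypotheses $\min\{k,\ell\}\geq t$ and $n\geq k+\ell-t+1$ are symmetric in $k$ and $\ell$. So we may apply the same proposition to $(\mg,\mf)$, interchanging $k\leftrightarrow \ell$ and $\tau_t(\mf)\leftrightarrow\tau_t(\mg)$:
$$|\mg|\leq {\tau_t(\mg)\brack t}{k-t+1\brack 1}^{\tau_t(\mf)-t}{n-\tau_t(\mf)\brack \ell-\tau_t(\mf)}.$$
Adding the two bounds gives $f_1(n,k,\ell,t,\tau_t(\mf),\tau_t(\mg))$ exactly, by (\ref{2604193}).

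For the second inequality, I would first replace ${\tau_t(\mf)\brack t}$ by ${\ell\brack t}$ and ${\tau_t(\mg)\brack t}$ by ${k\brack t}$. Every $G\in\mg$ is a $t$-cover of $\mf$ (it meets each $F$ in dimension $\geq t$), so $\tau_t(\mf)\leq \ell$. Likewise $\tau_t(\mg)\leq k$. Monotonicity of ${a\brack t}$ in $a$ then gives $|\mf|+|\mg|\leq f_2(n,k,\ell,t,\tau_t(\mf),\tau_t(\mg))$.

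It remains to show that each summand of $f_2$ is non-increasing in its variable on the relevant range. For the second summand, set $h(x)={k-t+1\brack 1}^{x-t}{n-x\brack \ell-x}$ for $t\leq x\leq \ell$. Then
$$\frac{h(x+1)}{h(x)}={k-t+1\brack 1}\frac{q^{\ell-x}-1}{q^{n-x}-1}.$$
This is at most $1$ because
$$\frac{q^{\ell-x}-1}{q^{n-x}-1}\leq q^{-(n-\ell)}\leq q^{-(k-t+1)}<\frac{q-1}{q^{k-t+1}-1},$$
using $n-\ell\geq k-t+1$. When $x=\ell$, the term ${n-x-1\brack \ell-x-1}$ vanishes, so $h(\ell+1)=0$. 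Hence $h$ is non-increasing on $[t,\tau_t(\mf)]$, and $h(\tau_t(\mf))\leq h(x)$ for $t\leq x\leq\tau_t(\mf)$. The first summand is handled the same way, using $n-k\geq \ell-t+1$.

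The only point needing care is this monotonicity estimate. It is presumably the content of the paper's (unseen) Lemma \ref{2604157}, which is already invoked in the proof of Proposition \ref{2604156}. I would cite that lemma if it states this; otherwise the one-line ratio computation above proves it.
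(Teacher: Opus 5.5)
Your proof is correct and follows the paper's own argument. Proposition~\ref{2604154} applied to $(\mf,\mg)$ and to $(\mg,\mf)$ gives the $f_{1}$ bound. Lemma~\ref{2604157}, whose content is exactly your ratio computation, together with $\tau_{t}(\mf)\leq \ell$ and $\tau_{t}(\mg)\leq k$, gives the $f_{2}$ bound. The paper leaves the replacement of ${\tau_{t}(\mf)\brack t}$ by ${\ell\brack t}$ (and ${\tau_{t}(\mg)\brack t}$ by ${k\brack t}$) implicit, so spelling it out as you did is appropriate.
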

\begin{proof}
	From Proposition \ref{2604154}, we obtain $|\mf|+|\mg|\leq f_{1}(n,k,\ell,t,\tau_{t}(\mf), \tau_{t}(\mg))$. Moreover, if  $\tau_{t}(\mf)\geq x\geq t$ and $\tau_{t}(\mg)\geq y\geq t$, then by Lemma \ref{2604157}, we have $f_{1}(n,k,\ell,t,\tau_{t}(\mf), \tau_{t}(\mg))\leq f_{2}(n,k,\ell,t,x,y)$. This implies the desired result.
\end{proof}

\begin{cor}\label{2605192}
	Let  $\min\{k,  \ell \}\geq t$ and $n\geq k+\ell-t+1$. If $\mf\subseteq {V\brack k}$ and $\mg\subseteq {V\brack \ell}$ are cross-$t$-intersecting families with  $\tau_{t}(\mg)\geq y\geq t$  and $\left|\mt_{t}(\mf)\right|\leq z$, then
	$$\left|\mf\right|+\left|\mg\right|\leq f_{3}(n,k,\ell,t,\tau_{t}(\mf), y,z).$$ 
\end{cor}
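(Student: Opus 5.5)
The plan is to split $\mg$ according to whether its members contain a minimum $t$-cover of $\mf$, and to bound the two parts with Proposition \ref{2604156} and with a direct count.

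Write $\mt_{t}(\mf)$ for the family of $t$-covers of $\mf$ of dimension $\tau_{t}(\mf)$. Set $\mm=\{M\in\mg: T\nsubseteq M \text{ for any } T\in\mt_{t}(\mf)\}$, so that
$$\mg=\mm\cup\bigcup_{T\in\mt_{t}(\mf)}\mg_{T}.$$

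\textbf{Bounding $|\mf|$.} We take the first summand of $f_{3}$ directly from Proposition \ref{2604154}:
$$|\mf|\leq {\tau_{t}(\mf)\brack t}{\ell-t+1\brack 1}^{\tau_{t}(\mg)-t}{n-\tau_{t}(\mg)\brack k-\tau_{t}(\mg)}.$$
Since $\tau_{t}(\mg)\geq y\geq t$, the monotonicity statement (Lemma \ref{2604157}, already used in Corollary \ref{2605191}) lets us replace $\tau_{t}(\mg)$ by $y$. The dependence on $\tau_{t}(\mg)$ sits only in the factor ${\ell-t+1\brack 1}^{s-t}{n-s\brack k-s}$, which is nonincreasing in $s$ when $n\geq k+\ell-t+1$. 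This gives the first term of $f_{3}(n,k,\ell,t,\tau_{t}(\mf),y,z)$.

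\textbf{Bounding $|\mm|$.} Proposition \ref{2604156} gives
$$|\mm|\leq {\tau_{t}(\mg)\brack t}{k-t+1\brack 1}^{\tau_{t}(\mf)-t+1}{n-\tau_{t}(\mf)-1\brack \ell-\tau_{t}(\mf)-1}.$$
We need ${\tau_{t}(\mg)\brack t}\leq{k\brack t}$, i.e.\ $\tau_{t}(\mg)\leq k$. This holds because any $F\in\mf$ (and $\mf$ is non-empty) is a $t$-cover of $\mg$ by cross-$t$-intersection. This produces the second summand.

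\textbf{Bounding the rest.} Each $\mg_{T}$ with $T\in\mt_{t}(\mf)$ has size at most the number of $\ell$-subspaces containing a fixed $\tau_{t}(\mf)$-space, namely ${n-\tau_{t}(\mf)\brack \ell-\tau_{t}(\mf)}$. There are $|\mt_{t}(\mf)|\leq z$ such $T$, so this part is at most $z{n-\tau_{t}(\mf)\brack \ell-\tau_{t}(\mf)}$. Summing the three bounds yields $|\mf|+|\mg|\leq f_{3}(n,k,\ell,t,\tau_{t}(\mf),y,z)$.

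The only delicate point is the monotonicity replacement of $\tau_{t}(\mg)$ by $y$ in the first term. I would cite Lemma \ref{2604157} exactly as in Corollary \ref{2605191}, after checking that the ratio of consecutive terms is
$${\ell-t+1\brack 1}\cdot\frac{q^{k-s}-1}{q^{n-s}-1}\leq 1,$$
which holds when $n-s\geq \ell-t+1+k-s$, i.e.\ under the standing hypothesis $n\geq k+\ell-t+1$.
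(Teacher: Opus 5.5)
Your proposal is correct and follows the paper's proof essentially step for step. As in the paper, you bound $|\mf|$ via Proposition~\ref{2604154} and Lemma~\ref{2604157}, using $y\leq\tau_{t}(\mg)\leq k$. You then split $\mg$ into two parts: the members containing no $T\in\mt_{t}(\mf)$, handled by Proposition~\ref{2604156} together with ${\tau_{t}(\mg)\brack t}\leq{k\brack t}$, and the members containing some $T\in\mt_{t}(\mf)$, which contribute at most $|\mt_{t}(\mf)|{n-\tau_{t}(\mf)\brack \ell-\tau_{t}(\mf)}\leq z{n-\tau_{t}(\mf)\brack \ell-\tau_{t}(\mf)}$.
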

\begin{proof}
By Proposition \ref{2604154} and Lemma \ref{2604157}, we get $\left|\mf\right|\leq {\tau_{t}(\mf)\brack t}{\ell-t+1\brack 1}^{y-t}{n-y\brack k-y}$. Note that 
$$\mg=\left\{ G\in \mg: T\nsubseteq G \ \textnormal{for any}\ T\in \mt_{t}(\mf)\right\}\cup\left\{G\in\mg: T\subseteq G \ \textnormal{for some}\ T\in \mt_{t}(\mf)\right\}.$$
It follows from Proposition \ref{2604156} and $\tau_{t}(\mg)\leq k$ that 
$$\left|\mg\right|\leq {k\brack t}{k-t+1\brack 1}^{\tau_{t}(\mf)-t+1}{n-\tau_{t}(\mf)-1\brack \ell-\tau_{t}(\mf)-1}+\left|\mt_{t}(\mf)\right|{n-\tau_{t}(\mf)\brack \ell-\tau_{t}(\mf)}.$$
which implies  $$\left|\mf\right|+\left|\mg\right|\leq f_{3}(n,k,\ell,t,\tau_{t}(\mf), y,\left|\mt_{t}(\mf)\right|)\leq f_{3}(n,k,\ell,t,\tau_{t}(\mf), y,z).$$ 
The required result follows.
\end{proof}

The sum $|\mf_1|+|\mg_1|$ in Construction \ref{2604171} is independent of $A$ and $B$, and is denoted by $h_{1}(n,k,\ell,t)$. Likewise, the sums $|\mf_2|+|\mg_2|$ and $|\mf_3|+|\mg_3|$ in Construction \ref{2604172} are independent of $C$ and $D$, respectively. We denote the former by $m_{1}(n,k,\ell,t)$, and then the latter is equal to $m_{1}(n,\ell,k,t)$ by symmetry. For convenience, write 
\begin{align}
	h_{2}(n,k,\ell,t)=&{k-t+1\brack 1}^{2}{n-t-1\brack \ell-t-1}-2q{k-t+1\brack 2}{k-t+1\brack 1}{n-t-2\brack \ell-t-2},\label{2604174}\\
	m_{2}(n,k,\ell,t)=&{k+1\brack t+1}{n-t-1\brack \ell-t-1}-q{t+1\brack 1}{k+1\brack t+2}{n-t-2\brack \ell-t-2}. \label{2604175}
\end{align}

\begin{lem}\label{2604173}
	Let   $\ell \geq k\geq t+1$ and $n\geq k+\ell-t+1$. Then 
	$h_{1}(n,k,\ell,t)> h_{2}(n,k,\ell,t)$.
	In particular, we have
	$$\frac{h_{1}(n,k,\ell,t)}{{k-t+1\brack 1}^{2}{n-t-1\brack \ell-t-1}}> 1-\frac{2}{(q^{2}-1)q^{n-k-\ell+t-1}}. $$
\end{lem}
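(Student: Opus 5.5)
We need $h_1=|\mf_1|+|\mg_1| = 2+|\mh(A,B;\ell,t)|$, with $\dim(A\cap B)=t-1$, to exceed $h_2$. The idea is a Bonferroni-type lower bound on $|\mh(A,B;\ell,t)|$ obtained by counting $\ell$-spaces through suitable $(t+1)$-dimensional subspaces of $A+B$. Set $W=A\cap B$ with $\dim W=t-1$, and write $A=W\oplus A'$, $B=W\oplus B'$ with $\dim A'=\dim B'=k-t+1$.

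\textbf{Step 1 (the sets).} For $a\in{A'\brack 1}$ and $b\in{B'\brack 1}$, let $S_{a,b}=W+a+b$, a $(t+1)$-space. Since $A\cap B=W$, we have $S_{a,b}\cap A\supseteq W+a$ and $S_{a,b}\cap B\supseteq W+b$, both of dimension $t$. Hence every $\ell$-space containing some $S_{a,b}$ lies in $\mh(A,B;\ell,t)$. These spaces are distinct, because $S_{a,b}\cap A=W+a$ (dimension count in $A+B$) and similarly for $B$. There are ${k-t+1\brack 1}^2$ of them, and each lies in ${n-t-1\brack \ell-t-1}$ members of ${V\brack \ell}$. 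This gives the main term of $h_2$.

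\textbf{Step 2 (inclusion–exclusion).} By Bonferroni, $|\mh|\geq \sum_{(a,b)} N(S_{a,b})-\sum_{\{(a,b),(a',b')\}} N(S_{a,b}+S_{a',b'})$, where $N(X)={n-\dim X\brack \ell-\dim X}$. If $a\neq a'$ and $b\neq b'$, the sum $S_{a,b}+S_{a',b'}$ has dimension $t+3$, so the resulting term is of lower order. Since only $h_1>h_2$ is required, the real work is to bound the pair sum by the subtracted term $2q{k-t+1\brack 2}{k-t+1\brack 1}{n-t-2\brack\ell-t-2}$.

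The cleaner route, and the one to try first, is to organize the count so that only dimension-$(t+2)$ intersections appear. For each $H\in\mh$, let $X_H=H\cap A$ and $Y_H=H\cap B$. Count the pairs $(a,b)$ with $S_{a,b}\subseteq H$ and subtract overcounts via the pairs sharing $a$ (so that $b,b'$ span a line of $B'$) or sharing $b$. Two members $S_{a,b}\neq S_{a,b'}$ span $W+a+\langle b,b'\rangle$, of dimension $t+2$. The number of such pairs is $\tfrac12{k-t+1\brack 1}\cdot{k-t+1\brack1}({k-t+1\brack1}-1)$ for each of the two sides. Since ${m\brack1}({m\brack1}-1)=(q+1)q{m\brack 2}$, this equals $\tfrac{q(q+1)}{2}{k-t+1\brack 2}{k-t+1\brack1}$ per side. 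Because $(q+1)/2$ may exceed $1$, I would rather use the grouping inequality $|\bigcup_{b}E_b|\geq \sum_b|E_b|-\sum_{\text{consecutive }b}|E_b\cap E_{b'}|$, taken along a suitable ordering or spanning tree. Alternatively, count via lines: $N(W+a+\langle b,b'\rangle)$ is attached to each line of $B'$ through $a$, and the total weight over lines is $q{k-t+1\brack2}$ per $a$ after adjusting. Together with the analogous count for a fixed $b$, and the bound $|\bigcup E_i|\ge\sum|E_i|-\sum_{\text{tree edges}}$ with tree edges chosen inside each line so that each line contributes ${2\brack1}-1=q$ edges, this yields exactly the subtracted term $2q{k-t+1\brack2}{k-t+1\brack1}{n-t-2\brack\ell-t-2}$. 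Pairs differing in both coordinates are handled by a further ordering so that they add no new edges. The subtraction is strictly smaller than the truth, and adding the $+2$ from $\mf_1$ gives a strict inequality. I expect this step, getting exactly the factor $q$ rather than $\binom{q+1}{2}$, to be the main obstacle. If it resists, I would instead first split $\mh$ by the pair $(H\cap A,H\cap B)$ of $t$-spaces containing $W$ and count directly.

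\textbf{Step 3 (the ratio).} Divide $h_2$ by ${k-t+1\brack1}^2{n-t-1\brack\ell-t-1}$. Using ${k-t+1\brack2}/{k-t+1\brack1}=(q^{k-t}-1)/(q^2-1)$ and ${n-t-2\brack\ell-t-2}/{n-t-1\brack\ell-t-1}=(q^{\ell-t-1}-1)/(q^{n-t-1}-1)$, the subtracted ratio is $2q(q^{k-t}-1)(q^{\ell-t-1}-1)/\big((q^2-1)(q^{n-t-1}-1)\big)$. Bounding $(q^{k-t}-1)(q^{\ell-t-1}-1)<q^{k+\ell-2t-1}$, and using $q\,q^{k+\ell-2t-1}/(q^{n-t-1}-1)\le 1/q^{n-k-\ell+t-1}$ (valid since $q^{n-t-1}-1\ge q^{n-t-2}$ when $n-t-1\geq1$), yields the stated bound.
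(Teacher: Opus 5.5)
\textbf{The gap is in Step 2.} Your target subtraction is right: a spanning tree on the $q+1$ points of each line of $A'$ (paired with each point $b$ of $B'$), and symmetrically for $B'$, gives exactly $2q{k-t+1\brack 2}{k-t+1\brack 1}{n-t-2\brack \ell-t-2}$. But the principle you invoke to justify subtracting only these pairs is false as stated. For an arbitrary tree or ordering, $|\bigcup E_i|\ge\sum|E_i|-\sum_{\text{tree edges}}|E_i\cap E_j|$ fails; that is the Hunter--Worsley \emph{upper} bound. For a counterexample, take $E_1=E_3\neq\emptyset$ and $E_2=\emptyset$ along the path $1,2,3$. For a chosen edge set $\Gamma$, the lower bound holds provided every $H$ satisfies $e_\Gamma(S_H)\ge |S_H|-1$, where $S_H=\{(a,b):S_{a,b}\subseteq H\}$. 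Verifying this condition is the whole content of $|\mg_1|\ge h_2$, and your proposal never does it. The remark that pairs differing in both coordinates are ``handled by a further ordering'' is not an argument; such pairs are simply not needed once the condition holds.

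\textbf{The missing verification is short.} If $W\subseteq H$, then $S_H$ is the product of the point sets of the subspaces $H\cap A'$ and $H\cap B'$. Any two points of $H\cap A'$ span a line inside $H\cap A'$, and all tree edges of that line (for fixed $b$) lie in $S_H$. So every row and column of $S_H$ is connected, $S_H$ induces a connected graph, and $e_\Gamma(S_H)\ge|S_H|-1$.

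The paper does exactly this numerically. It double counts $\mh(t,t)$, $\mh(t,t+1)$, $\mh(t+1,t)$ by $G$. Each $G$ with $\dim(G\cap A)=i$ and $\dim(G\cap B)=j$ contributes ${i-t+1\brack 1}{j-t+1\brack 1}-q{i-t+1\brack 1}{j-t+1\brack 2}-q{i-t+1\brack 2}{j-t+1\brack 1}$, which is precisely $|S_G|-e_\Gamma(S_G)$ for your $\Gamma$. The paper then checks that this equals $1$ for $(i,j)\in\{(t,t),(t,t+1),(t+1,t)\}$ and is $\le 0$ otherwise. Once repaired, your route coincides with the paper's proof.

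\textbf{Step 3 also needs a fix.} The inequality $q\cdot q^{k+\ell-2t-1}/(q^{n-t-1}-1)\le q^{-(n-k-\ell+t-1)}$ is false: the left side is strictly larger. Using $q^{n-t-1}-1\ge q^{n-t-2}$ only gives it up to a factor $q$. Instead use $q^{k-t}-1<q^{k-t}$ together with $(q^{\ell-t-1}-1)/(q^{n-t-1}-1)<q^{\ell-n}$ from Lemma~\ref{2604282}.
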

\begin{proof}
	From Lemma \ref{2604282}, we obtain 
	\begin{equation*}
		\begin{aligned}
			\frac{h_{2}(n,k,\ell,t)}{{k-t+1\brack 1}^{2}{n-t-1\brack \ell-t-1}}=1-\frac{2q(q^{k-t}-1)(q^{\ell-t-1}-1)}{(q^{2}-1)(q^{n-t-1}-1)}\geq 1-\frac{2}{(q^{2}-1)q^{n-k-\ell+t-1}}.
		\end{aligned}
	\end{equation*}
	We just need to show  $h_{1}(n,k,\ell,t)> h_{2}(n,k,\ell,t)$.

	Let $A$, $B$, $\mf_{1}$ and $\mg_{1}$ be as in Construction \ref{2604171}. To prove $h_{1}(n,k,\ell,t)> h_{2}(n,k,\ell,t)$, it is sufficient to show 
	$\left|\mg_{1}\right|\geq h_{2}(n,k,\ell,t)$. 
	If $\ell=t+1$, then $k=t+1$ and 
	$$\mg_{1}=\left\{A_{1}+B_{1}: A_{1}\in{A\brack t},\ A\cap B\subseteq A_{1},\ B_{1}\in{B\brack t},\ A\cap B\subseteq B_{1}\right\}.$$
	This implies $\left|\mg_{1}\right|=(q+1)^{2}=h_{2}(n,t+1,t+1,t)$, as desired. Next, we assume $\ell\geq t+2$.
	
	In the remaining of this proof, for  $i\geq t$ and $j\geq t$, write 
	\begin{equation*}
		\begin{aligned}
			\mg_{1}(i,j)=&\left\{G\in {V\brack \ell}: A\cap B\subseteq G,\ \dim(G\cap A)=i,\ \dim(G\cap B)=j \right\},\\
			\mh(i,j)=&\left\{(A_{1},B_{1}, G)\in {A\brack i}\times{B\brack j}\times {V\brack \ell}: A\cap B\subseteq A_{1},\ A\cap B\subseteq B_{1},\ A_{1}+B_{1}\subseteq G \right\}.
		\end{aligned}
	\end{equation*}
	Adopting notations above, we have $$\mh(i,j)=\bigcup_{G\in\mg_{1}}\left\{(A_{1},B_{1}, G): A_{1}\in {A\cap G\brack i},\ A\cap B\subseteq A_{1},\ B_{1}\in {B\cap G\brack j},\ A\cap B\subseteq B_{1} \right\}.$$ 
	Indeed, the union in this equality is disjoint.  Set $$\mi=\left\{(\dim(G\cap A),\dim(G\cap B)): G\in\mg_{1},\ A\cap B\subseteq G \right\}.$$
	It is routine to check that $\{(t,t), (t,t+1), (t+1,t)\}\subseteq \mi$. We further conclude that
	$$\left|\mh(t,t)\right|=\sum_{(i,j)\in \mi}{i-t+1\brack 1}{j-t+1\brack 1}\left| \mg_{1}(i,j)\right|,$$
	$$\left|\mh(t,t+1)\right|=\sum_{(i,j)\in \mi}{i-t+1\brack 1}{j-t+1\brack 2}\left| \mg_{1}(i,j)\right|, $$
	$$\left|\mh(t+1,t)\right|=\sum_{(i,j)\in \mi}{i-t+1\brack 2}{j-t+1\brack 1}\left| \mg_{1}(i,j)\right|.$$
Note that $i\geq t$ and $j\geq t$ for any $(i,j)\in\mi$. Therefore, we have
	\begin{equation*}
		\begin{aligned}
			&\left|\mh(t,t)\right|-q\left|\mh(t,t+1)\right|-q\left|\mh(t+1,t)\right|\\
			=&\left|\mg_{1}(t,t)\right|+\left|\mg_{1}(t,t+1)\right|+\left|\mg_{1}(t+1,t)\right|			
			+\sum_{(i,j)\in\mi,\ i+j\geq 2t+2} \left({i-t+1\brack 1}{j-t+1\brack 1}\right.\\
			&\left.-q{i-t+1\brack 1}{j-t+1\brack 2}-q{i-t+1\brack 2}{j-t+1\brack 1}\right)\left|\mg_{1}(i,j)\right|. 
		\end{aligned}
	\end{equation*}
 For any  $i\geq t+2$, inequality 
	${i-t+1\brack 1}\leq q{i-t+1\brack 2}$ holds. Moreover, if $\min\{i,j\}\geq t+1$, then
	$${i-t+1\brack 1}{j-t+1\brack 1}\leq
	q{i-t+1\brack 1}{j-t+1\brack 2}+q{i-t+1\brack 2}{j-t+1\brack 1}. $$
Divide $(i,j)\in\mi$ with  $i+j\geq 2t+2$ into three cases: $i=t$, $j=t$ and $\min{i,j}\geq t+1$. Then
	$$\left|\mh(t,t)\right|-q\left|\mh(t,t+1)\right|-q\left|\mh(t+1,t)\right|\leq \left|\mg_{1}(t,t)\right|+\left|\mg_{1}(t,t+1)\right|+\left|\mg_{1}(t+1,t)\right|\leq \left|\mg_{1}\right|.$$
	
	Counting the number of pairs $(A_{1}, B_{1})$ first, we obtain
	$\left|\mh(t,t)\right|={k-t+1\brack 1}^{2}{n-t-1\brack \ell-t-1}$ and
	$$ \left|\mh(t,t+1)\right|=\left|\mh(t+1,t)\right|={k-t+1\brack 1}{k-t+1\brack 2}{n-t-2\brack \ell-t-2},$$
	implying that $$\left|\mh(t,t)\right|-q\left|\mh(t,t+1)\right|-q\left|\mh(t+1,t)\right|=h_{2}(n,k,\ell,t).$$ Hence $\left|\mg_{1}\right|\geq h_{2}(n,k,\ell,t)$, as desired.   
\end{proof}

\begin{lem}\label{2604191}
	Let  $\min\{k,\ell\}\geq t+1$ and $n\geq k+\ell-t+1$. Then $m_{1}(n,k,\ell, t)> m_{2}(n,k,\ell, t)$. In particular, we have
	$$\frac{m_{1}(n,k,\ell,t)}{{k+1\brack t+1}{n-t-1\brack \ell-t-1}}>1-\frac{1}{(q-1)q^{n-k-\ell+t}}.$$
\end{lem}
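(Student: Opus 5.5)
Lemma \ref{2604191} follows the same template as the proof of Lemma \ref{2604173}. We first dispose of the ``in particular'' part by a direct computation. Expanding the ratio gives
$$\frac{m_{2}(n,k,\ell,t)}{{k+1\brack t+1}{n-t-1\brack \ell-t-1}}=1-q{t+1\brack 1}\frac{{k+1\brack t+2}}{{k+1\brack t+1}}\cdot\frac{{n-t-2\brack \ell-t-2}}{{n-t-1\brack \ell-t-1}}=1-\frac{q(q^{t+1}-1)(q^{k-t}-1)(q^{\ell-t-1}-1)}{(q^{t+2}-1)(q-1)(q^{n-t-1}-1)}.$$
We then bound $(q^{t+1}-1)/(q^{t+2}-1)<q^{-1}$ and $(q^{k-t}-1)(q^{\ell-t-1}-1)/(q^{n-t-1}-1)<q^{k+\ell-n-t}$. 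These should give a quantity at least $1-\frac{1}{(q-1)q^{n-k-\ell+t}}$. This part is routine, presumably via Lemma \ref{2604282}. So everything reduces to proving $m_{1}>m_{2}$.

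For the main inequality, fix $C\in{V\brack k+1}$. Then $m_1=\binom{k+1}{1}_q$-type term $+\left|\mm(C;\ell,t)\right|$, namely $m_{1}={k+1\brack k}+|\mg_{2}|$ with $\mg_{2}=\mm(C;\ell,t)$. Since ${k+1\brack 1}>0$, it suffices to show $|\mg_{2}|\geq m_{2}(n,k,\ell,t)$. We do this with a Bonferroni-type double count, exactly as in Lemma \ref{2604173}. Define
$$\mh_{s}=\left\{(S,G)\in {C\brack s}\times{V\brack \ell}: S\subseteq G\right\},\qquad s=t+1,\,t+2.$$
Counting $S$ first gives $|\mh_{t+1}|={k+1\brack t+1}{n-t-1\brack \ell-t-1}$ and $|\mh_{t+2}|={k+1\brack t+2}{n-t-2\brack \ell-t-2}$. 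Counting $G$ first, and grouping $G$ by $i=\dim(G\cap C)$, gives $|\mh_{s}|=\sum_{i}{i\brack s}N_i$, where $N_i$ is the number of $\ell$-subspaces $G$ with $\dim(G\cap C)=i$. Hence
$$|\mh_{t+1}|-q{t+1\brack 1}|\mh_{t+2}|=\sum_{i\geq t+1}\left({i\brack t+1}-q{t+1\brack 1}{i\brack t+2}\right)N_i .$$

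The key step is to show that each coefficient satisfies ${i\brack t+1}-q{t+1\brack 1}{i\brack t+2}\le 1$ for every $i\geq t+1$. Then the right-hand side is at most $\sum_{i\ge t+1}N_i=|\mg_{2}|$, which is what we need. For $i=t+1$ the coefficient equals $1$. For $i\geq t+2$ we use
$${i\brack t+2}={i\brack t+1}\frac{q^{i-t-1}-1}{q^{t+2}-1},$$
so it suffices to check $q(q^{t+1}-1)(q^{i-t-1}-1)\geq (q-1)(q^{t+2}-1)$. At $i=t+2$ this reads $q^{t+2}-q\ge q^{t+2}-q^{t+1}-q+1$, i.e.\ $q^{t+1}\geq 1$, which holds. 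The left side is increasing in $i$, so the inequality holds for all larger $i$, and there the coefficient is in fact $\le 0$.

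The main obstacle is only this coefficient inequality together with confirming the normalization constant $q{t+1\brack 1}$ in $m_2$. The identity works out to match the definition of $m_{2}$ in (\ref{2604175}) exactly, which gives $|\mg_2|\ge m_2$ and hence $m_1>m_2$. The ``in particular'' part then follows from $m_1>m_2$ and the ratio computation above.
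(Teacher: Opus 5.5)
Your approach is the paper's approach: the same double count of pairs $(S,G)$ with $S\in{C\brack s}$ and $s\in\{t+1,t+2\}$, the same identity $m_{2}=|\mh_{t+1}|-q{t+1\brack 1}|\mh_{t+2}|$, and the same ratio computation for the ``in particular'' part, which is fine. However, your check of the coefficient bound at $i=t+2$ is wrong. There, your sufficient condition $q(q^{t+1}-1)(q^{i-t-1}-1)\geq (q-1)(q^{t+2}-1)$ becomes $q^{t+2}-q\geq q^{t+2}-1$ after dividing by $q-1$. That is, $q\le 1$, which is false. You wrote the right-hand side as $q^{t+2}-q^{t+1}-q+1=(q-1)(q^{t+1}-1)$ instead of $q^{t+2}-1$.

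As a result, your claim that the coefficient is $\le 0$ for every $i\geq t+2$ is false. At $i=t+2$ the coefficient is exactly ${t+2\brack 1}-q{t+1\brack 1}=1$. Your induction on $i$ via ``the left side is increasing'' also loses its base case.

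The repair is short and gives exactly the paper's argument:
\begin{itemize}
\item Treat $i=t+1$ and $i=t+2$ directly; both coefficients equal $1$, so they are still $\le 1$.
\item Start the monotonicity argument at $i=t+3$. There the condition reads $(q^{t+2}-q)(q+1)\geq q^{t+2}-1$, i.e.\ $q^{t+3}-q^{2}-q+1\geq 0$, which holds. So the coefficients are $\le 0$ for all $i\geq t+3$.
\item This gives $m_{2}\le N_{t+1}+N_{t+2}\le|\mg_{2}|$, as in the paper.
\end{itemize}
Your uniform treatment of $\ell=t+1$, using the convention ${a\brack b}=0$ for $b<0$, is fine; the paper handles that case separately.
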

\begin{proof}
	By Lemma \ref{2604282}, we get
	$$\frac{m_{2}(n,k,\ell, t)}{{k+1\brack t+1}{n-t-1\brack \ell-t-1}}=1-\frac{q(q^{t+1}-1)(q^{k-t}-1)(q^{\ell-t-1}-1)}{(q^{t+2}-1)(q-1)(q^{n-t-1}-1)}\geq 1-\frac{1}{(q-1)q^{n-k-\ell+t}}.$$
	It is sufficient to prove $m_{1}(n,k,\ell, t)> m_{2}(n,k,\ell, t)$.
	
	Let $C$, $\mf_{2}$ and $\mg_{2}$ be as in Construction \ref{2604172}. Now, we show $\left|\mg_{2}\right|\geq m_{2}(n,k,\ell, t)$.  If $\ell=t+1$, then $\mg_{2}={C\brack t+1}$ and $\left|\mg_{2}\right|=m_{2}(n,k,t+1,t)$, as asserted. Next, we assume $\ell\geq t+2$. 
	
	To simplify the rest of this proof, we set $m=\min\{k+1, \ell\}$. For $i\geq t+1$, write
	$$\mm(i)=\left\{(C_1, G)\in {C\brack i}\times {V\brack \ell}:C_{1}\subseteq G\right\},\ \mg_{2}(i)=\left\{G\in {V\brack \ell}: \dim(G\cap C)=i\right\}.$$
	Counting  $\left|\mm(t+1)\right|$ and $\left|\mm(t+2)\right|$ in two ways yields
	\begin{equation*}
		\begin{aligned}
			\left|\mm(t+1)\right|=&{k+1\brack t+1}{n-t-1\brack \ell-t-1}=\sum_{i=t+1}^{m}{i\brack t+1}\left|\mg_{2}(i)\right|, \\
			\left|\mm(t+2)\right|=&{k+1\brack t+2}{n-t-2\brack \ell-t-2}=\sum_{i=t+2}^{m}{i\brack t+2}\left|\mg_{2}(i)\right|. 
		\end{aligned}		
	\end{equation*} 
	It follows that 
	\begin{equation*}
		\begin{aligned}
			m_{2}(n,k,\ell,t)=&\left|\mm(t+1)\right|-q{t+1\brack 1}\left|\mm(t+2)\right|\\
			=&  \left|\mg_{2}(t+1)\right|+\left|\mg_{2}(t+2)\right|+\sum_{i=t+3}^{m}\left({i\brack t+1}-q{t+1\brack 1} {i\brack t+2}\right)\left|\mg_{2}(i)\right|.
		\end{aligned}
	\end{equation*}
	Note that ${i\brack t+1}\leq q{t+1\brack 1}{i\brack t+2}$ for any $i\geq t+3$.  Therefore, we get
	$m_{2}(n,k,\ell,t)\leq \left|\mg_{2}(t+1)\right|+\left|\mg_{2}(t+2)\right|\leq \left|\mg_{2}\right|$. The desired result follows. 
\end{proof}

Recall that a family $\mf\subseteq {V\brack k} $ is said to be \textit{$t$-intersecting} if $\dim(F_{1}\cap F_{2})\geq t$ for any $F_{1},F_{2}\in\mf$. Let us close this section with a structural result for cross-$t$-intersecting families, obtained by imposing the intersection property on one  family.

\begin{pr1}\label{2605201}
	Let $\min\{k,\ell\}\geq t+1$ and $n\geq k+\ell-t+1$. Suppose that $\mf\subseteq {V\brack k}$ and $\mg\subseteq {V\brack \ell}$ are cross-$t$-intersecting. If   $\mf$ is $(k-1)$-intersecting and $\dim(\cap_{F\in\mf} F)<t$, then   $\mf\subseteq {C\brack k}$ and $\mg\subseteq \mm(C;\ell,t)$ for some $C\in{V\brack k+1}$.
\end{pr1}
\begin{proof}
	
	Since $\mf$ is $(k-1)$-intersecting, it must be contained in a maximal clique of the Grassmann graph on the $k$-dimensional subspaces of $V$. As characterized in \cite[Remark (ii), Section 9.3]{BCN-1989}, the maximal cliques in this graph fall into two classes: the collection of all $k$-dimensional subspaces containing a fixed member of ${V\brack k-1}$, and the collection of all $k$-dimensional subspaces contained in a fixed member of ${V\brack k+1}$.

	It follows from $\dim(\cap_{F\in\mf}F)<t$ that there exists $C\in{V\brack k+1}$ such that $\mf\subseteq {C\brack k}$.  As $\mf$, $\mg$ are cross-$t$-intersecting, we know $\dim(G\cap C)\geq t$ for any $G\in \mg$.   Suppose for contradiction $\dim(G\cap C)=t$ for some  $G\in\mg$. By $\dim(\cap_{F\in\mf}F)<t$, there exists $F\in\mf$ such that $G\cap C\nsubseteq F$, which implies $\dim(G\cap F)=\dim(G\cap C\cap F)<t$, a contradiction. Hence $\mg\subseteq \mm(C;\ell,t)$. The required result holds.
\end{proof}

\section{Proof of the main result}\label{2606016}

This section is devoted to the proof of Theorem \ref{1}, which is organized as follows: cases (i) and (ii) are established in Section \ref{2606075},  (iii) in Section \ref{2606076}, and the other cases in Section \ref{2606077}.

Let $\min\{k,\ell\}\geq t+1$ and $n\geq k+\ell-t+1$. Suppose that $\mf\subseteq {V\brack k}$ and $\mg\subseteq {V\brack \ell}$ are  non-trivial cross-$t$-intersecting with the maximum sum. Construction \ref{2604172} yields 
\begin{equation}\label{2606079}
	\left|\mf\right|+\left|\mg\right|\geq\max\{ m_{1}(n,k,\ell,t), m_{1}(n,\ell,k,t)\}.
\end{equation}
Furthermore, if $\ell\geq k$, then Construction \ref{2604171} guarantees that 
\begin{equation}\label{2606061}
	\left|\mf\right|+\left|\mg\right|\geq h_{1}(n,k,\ell,t).
\end{equation}

\subsection{Proof of Theorem \ref{1} (i) and (ii)}\label{2606075}

The  two  lemmas below are needed  to analyze the number of minimum $1$-covers.  For subspaces $A$, $B$ of $V$, write 
$$\mw_{1}(A, B)=\left\{A_{1}+B_{1}: A_{1}\in{A\brack 1},\ B_{1}\in {B\brack 1}\right\}, $$
$$\mw_{2}(A,B)=\left\{W\in\mw_{1}(A,B):  \dim(W\cap A \cap B)=0\right\}.$$

\begin{lem}\label{2604212}
	Let  $n\geq 2k\geq 4$. Suppose $A, B\in {V\brack k}$ with $\dim(A\cap B)=0$, and $\mt\subseteq\mw_{1}(A,B)$. If $\left|\mt\right|> {k-1\brack 1}\left({k\brack 1}+q\right)$, then the only $k$-dimensional $1$-covers of $\mt$  are  $A$ and $B$. 
\end{lem}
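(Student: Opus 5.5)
Let $X\in{V\brack k}$ be a $1$-cover of $\mt$ with $X\ne A$ and $X\ne B$. We will show that $|\mt|\le{k-1\brack 1}\left({k\brack 1}+q\right)$, which contradicts the hypothesis. First note that $A$ and $B$ really are $1$-covers: each $W=A_1+B_1\in\mt$ contains $A_1\subseteq A$ and $B_1\subseteq B$. Since $A\cap B=0$, each $W\in\mw_1(A,B)$ is $2$-dimensional and determines the pair $(A_1,B_1)$ uniquely, namely $A_1=W\cap A$ and $B_1=W\cap B$. So we identify $\mt$ with a set of pairs $(P,Q)\in{A\brack 1}\times{B\brack 1}$. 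The condition $\dim(X\cap(P+Q))\ge 1$ says that $X$ contains $P$, or $X$ contains $Q$, or $X$ contains a point of $P+Q$ lying on neither $P$ nor $Q$. Write $a=\dim(X\cap A)$ and $b=\dim(X\cap B)$. Since $X\ne A$ and $\dim X=k$, we have $a\le k-1$, and likewise $b\le k-1$.

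We count the pairs of each kind. Pairs with $P\subseteq X$ number at most ${a\brack 1}{k\brack 1}$, and pairs with $Q\subseteq X$ number at most ${b\brack 1}{k\brack 1}$. For the remaining pairs, consider the projection $\pi$ of $A\oplus B$ onto $A$ along $B$. A point of $X\cap(A+B)$ lying neither in $A$ nor in $B$ determines exactly one pair $(P,Q)$. Hence the number of remaining pairs is at most the number of points of $X\cap(A+B)$ outside $A\cup B$. Put $d=\dim(X\cap(A+B))\le k$. Then there are at most ${d\brack 1}-{a\brack 1}-{b\brack 1}$ such points.

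Two regimes now need a sharper count. If $a=k-1$, the first term ${a\brack 1}{k\brack 1}$ already equals ${k-1\brack 1}{k\brack 1}$. In that case the extra allowance $q{k-1\brack 1}$ must absorb everything else, so we need $b$ small and a count of the mixed points that uses $d\le k$. In general, the mixed points of $X\cap(A+B)$ form $Y\setminus((Y\cap A)\cup(Y\cap B))$ with $Y=X\cap(A+B)$. The projections $\pi_A(Y)$ and $\pi_B(Y)$ have dimensions $d-b$ and $d-a$. We also use that $a+b\le d\le k$.

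Combining the three counts, the task is to maximise
\[
{a\brack 1}{k\brack 1}+{b\brack 1}{k\brack 1}+{d\brack 1}-{a\brack 1}-{b\brack 1}
\]
subject to $a+b\le d\le k$ and $a,b\le k-1$. Since the Gaussian coefficient ${x\brack 1}$ is convex in $x$, the maximum occurs at an extreme point. The extreme cases are $(a,b)=(k-1,0)$ with $d\in\{k-1,k\}$, and $(a,b)=(k-1,1)$, which forces $d=k$ and $X=(X\cap A)\oplus(X\cap B)$. In the first case the bound is ${k-1\brack 1}{k\brack 1}+q^{k-1}$. Since $q^{k-1}\le q{k-1\brack 1}$ for $k\ge 2$, this is at most ${k-1\brack 1}\left({k\brack 1}+q\right)$.

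The main obstacle is the boundary case $(a,b)=(k-1,1)$. There the crude count gives ${k-1\brack 1}{k\brack 1}+{k\brack 1}$. This exceeds the target unless we notice that every pair with $Q=X\cap B$ is counted in both the first and the second term. After removing this overlap the bound drops to ${k-1\brack 1}{k\brack 1}+{k\brack 1}-{k-1\brack 1}={k-1\brack 1}{k\brack 1}+q^{k-1}$, which is at most the target as before. So the key care point is to count by inclusion–exclusion, not by adding the three classes. I would carry out this overlap-corrected count uniformly, giving
\[
{a\brack 1}{k\brack 1}+{b\brack 1}{k\brack 1}-{a\brack 1}{b\brack 1}+\left({d\brack 1}-{a\brack 1}-{b\brack 1}\right),
\]
and then check that its maximum over the admissible $(a,b,d)$ is at most ${k-1\brack 1}\left({k\brack 1}+q\right)$.
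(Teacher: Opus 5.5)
Your three-way split (pairs with $P\subseteq X$, pairs with $Q\subseteq X$, ``mixed'' pairs) matches the paper's, and the injection from mixed pairs into points of $X\cap(A+B)$ is correct. The gap is that your bound for the mixed pairs is too weak, so the final inequality you propose to check is false.

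You bound the mixed pairs by ${d\brack 1}-{a\brack 1}-{b\brack 1}$. At $(a,b,d)=(k-1,1,k)$ this equals $q^{k-1}-1$, and your overlap-corrected expression becomes
\[
{k-1\brack 1}{k\brack 1}+{k\brack 1}-{k-1\brack 1}+\left(q^{k-1}-1\right)={k-1\brack 1}{k\brack 1}+2q^{k-1}-1 .
\]
This exceeds ${k-1\brack 1}\left({k\brack 1}+q\right)$ by $q^{k-1}-{k-1\brack 1}=\frac{q^{k-1}(q-2)+1}{q-1}\geq 1$; for example, $k=q=2$ gives $6>5$. Your treatment of this boundary case reaches ${k-1\brack 1}{k\brack 1}+q^{k-1}$ only because your ``crude count'' ${k-1\brack 1}{k\brack 1}+{k\brack 1}$ silently omits the mixed term. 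Nothing in your argument justifies dropping it, and the uniform formula you intend to maximise contains it.

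The missing observation, which the paper uses, is this. For a mixed pair, the point $R=W\cap X$ cannot lie in $(X\cap A)+(X\cap B)$. Otherwise $P=\pi_A(R)\subseteq X\cap A$, and $W$ would not be mixed. So mixed pairs inject into points of $X$ outside $(X\cap A)\oplus(X\cap B)$, which gives at most ${k\brack 1}-{a+b\brack 1}$ of them. In the critical case $(k-1,1)$ this is $0$.

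Even with this sharper count you still need a genuine inequality, not the convexity/extreme-point heuristic; your overlap term $-{a\brack 1}{b\brack 1}$ destroys convexity anyway. The paper shows that
\[
{a\brack 1}{k\brack 1}+{b\brack 1}{k\brack 1}+{k\brack 1}-{a+b\brack 1}\le {k-1\brack 1}\left({k\brack 1}+q\right)+1 .
\]
It uses $q^a+q^b\le q^{a+b}+1$ when $a+b\le k-1$, and $q^a+q^{k-a}\le q^{k-1}+q$ when $a+b=k$. Equality holds only for $(a,b)\in\{(1,k-1),(k-1,1)\}$, and there the overlap ${a\brack 1}{b\brack 1}\geq 1$ removes the extra $1$.
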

\begin{proof}
	It is routine to check that $A$ and $B$ are $k$-dimensional covers of $\mt$.  Next, we prove that no member in ${V\brack k}\bs\left\{A, B\right\}$ is a $1$-cover of $\mt$.   We just need to show  $$\left|\left\{W\in\mw_{1}(A,B): \dim(W\cap G)>0\right\}\right|\leq {k-1\brack 1}\left({k\brack 1}+q\right)\ \textnormal{for any} \  G\in {V\brack k}\bs\left\{A, B\right\}.$$
	
	Pick $G\in {V\brack k}\bs\left\{A, B\right\}$. In the remaining of this proof, set $\dim(G\cap A)=x$, $\dim(G\cap B)=y$, 
	$$\ma=\left\{W\in \mw_{1}(A,B): W\cap A\subseteq G\right\},$$
	$$\mb=\left\{W\in \mw_{1}(A,B): W\cap B\subseteq G\right\},$$
	$$\mc=\left\{W\in \mw_{1}(A,B): W\cap A\nsubseteq G,\ W\cap B\nsubseteq G,\ \dim(W\cap G)>0\right\}.$$
	Note that $(A_{1}+B_{1})\cap A= A_{1}$ and $(A_{1}+B_{1})\cap B= B_{1}$ for any $A_{1}+B_{1}\in\mw_{1}(A, B)$. Then 	 
	$\ma\subseteq \mw_{1}(G\cap A, B)$ and $\mb \subseteq \mw_{1}(A, G\cap B)$, which implies
	$\left|\ma\right|\leq {x\brack 1}{k\brack 1}$ and $ \left|\mb\right|\leq {y\brack 1}{k\brack 1}$. 
	
	We claim that $\left|\mc\right|\leq {k\brack1 }-{x+y\brack 1}$. If $\mc$ is empty, then there is nothing to prove. Suppose that $\mc$ is non-empty. Let $P_A, P_B: A+B \to A+B$ be the projections onto $A$ along $B$, and onto $B$ along $A$, respectively.  
	Observe that $P_{A}(U)\subseteq C$ and $P_{B}(U)\subseteq D$ for any subspaces $C$, $D$ and $U$ with $C\subseteq A$, $D\subseteq B$ and $U\subseteq C+D$. 	 
	Choose $W\in\mc$. By $W\cap A\nsubseteq G$ and $W\cap B\nsubseteq G$, we have $$\dim(W\cap G)=1, $$
	and $\dim(P_{A}(W\cap G))=\dim(P_{B}(W\cap G))=1$. Note that $W=(W\cap A)+(W\cap B)$. We further conclude $P_{A}(W\cap G)=W\cap A$ and $P_{B}(W\cap G)=W\cap B$.
	Moreover, if $W_{1}, W_{2}\in \mc$ with $W_{1}\cap G=W_{2}\cap G$, then 
	$$W_{1}=P_{A}(W_{1}\cap G)+P_{B}(W_{1}\cap G)= P_{A}(W_{2}\cap G)+P_{B}(W_{2}\cap G)=W_{2}.$$ Observe that  $$W\cap G\nsubseteq (G\cap A)+(G\cap B).$$ Otherwise  $W\cap A=P_{A}(W\cap G)\subseteq G\cap A$, a contradiction.
	Therefore, we know $\left|\mc\right|\leq\left| {G\brack 1}\bs {(G\cap A)+(G\cap B)\brack 1}\right|={k\brack 1}-{x+y\brack 1}$. This claim holds.

	From the above discussion, we deduce the following inequality.
	\begin{equation*}
		\left|\ma\right|+\left|\mb\right|+\left|\mc\right|\leq{x\brack 1}{k\brack 1}+{y\brack 1}{k\brack 1}+{k\brack 1}-{x+y\brack 1}.
	\end{equation*} 
	Since $0\leq x,y\leq k-1$,  $x+y\leq k$, by Lemma \ref{2604221}, we get $\left|\ma\right|+\left|\mb\right|+\left|\mc\right|\leq {k-1\brack 1}\left({k\brack 1}+q\right)+1$. Moreover, equality holds if and only if $(x,y)=(1,k-1)$ or $(x,y)=(k-1,1)$, and we get $\ma\cap \mb\neq \emptyset$  in  either case.  Note that $$\ma\cup\mb\cup \mc=\left\{W\in\mw_{1}(A,B): \dim(W\cap G)>0\right\}.$$ 	If $(x,y)\neq (1,k-1)$ and $(x,y)\neq (k-1,1)$, then $$\left|\ma\cup\mb\cup \mc\right|\leq \left|\ma\right|+\left|\mb\right|+\left|\mc\right|< {k-1\brack 1}\left({k\brack 1}+q\right)+1.$$ If $(x,y)=(1,k-1)$ or $(x,y)= (k-1,1)$, then by $\ma\cap \mb\neq \emptyset$, we know $$\left|\ma\cup\mb\cup \mc\right|\leq \left|\ma\cup\mb\right|+\left|\mc\right|\leq\left|\ma\right|+\left|\mb\right|+\left|\mc\right|-1<{k-1\brack 1}\left({k\brack 1}+q\right)+1.$$
   This completes the proof.
\end{proof}

\begin{lem}\label{2604241}
	Let   $n\geq 2k\geq 2s+4$. Suppose $A, B\in {V\brack k}$ with $\dim(A\cap B)=s$, and $\mt\subseteq\mw_{2}(A,B)$.  If $\left|\mt\right|> q^{2s-1}{k-s\brack 1}\left({k-s\brack 1}+q\right)$,
	then each  $k$-dimensional $1$-cover of $\mt$ contains $A\cap B$. 
\end{lem}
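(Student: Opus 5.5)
Suppose, for contradiction, that $G\in{V\brack k}$ is a $1$-cover of $\mt$ with $S:=A\cap B\nsubseteq G$. Then every member of $\mt$ meets $G$ nontrivially. So it suffices to show that for every $G\in{V\brack k}$ with $S\nsubseteq G$,
$$\left|\left\{W\in\mw_{2}(A,B): \dim(W\cap G)>0\right\}\right|\leq q^{2s-1}{k-s\brack 1}\left({k-s\brack 1}+q\right).$$
I would model the argument on the proof of Lemma \ref{2604212}, working modulo $S$ and tracking the lifts.

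\textbf{Setup.} Put $\bar V=V/S$, $\bar A=A/S$ and $\bar B=B/S$. These are $(k-s)$-dimensional subspaces of $\bar V$ with $\bar A\cap\bar B=0$. The map $W\mapsto \bar W=(W+S)/S$ sends $\mw_{2}(A,B)$ into $\mw_{1}(\bar A,\bar B)$, because $W\cap S=0$ and $W\cap A$, $W\cap B$ are points outside $S$. Each fibre has at most $q^{2s}$ elements: there are $q^{s}$ points of $A_{1}+S$ outside $S$, and likewise $q^{s}$ for $B_{1}+S$. This already explains the factor $q^{2s}$; the task is to gain one extra factor $q^{-1}$ from the hypothesis $S\nsubseteq G$.

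\textbf{Splitting.} As in Lemma \ref{2604212}, I would split the $W$ meeting $G$ into three classes:
\begin{itemize}
\item $\ma$: those with $W\cap A\subseteq G$;
\item $\mb$: those with $W\cap B\subseteq G$;
\item $\mc$: those with $W\cap A\nsubseteq G$, $W\cap B\nsubseteq G$ and $\dim(W\cap G)>0$.
\end{itemize}

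\textbf{Bounding $\ma$ and $\mb$.} Let $x=\dim(G\cap A)$, $y=\dim(G\cap B)$ and $r=\dim(G\cap S)\leq s-1$. A member of $\ma$ is determined by a point of $G\cap A$ outside $S$, of which there are ${x\brack 1}-{r\brack 1}$, together with a point $B_{1}$ of $B\setminus S$, of which there are $q^{s}{k-s\brack 1}$. Hence $|\ma|\leq\big({x\brack 1}-{r\brack 1}\big)q^{s}{k-s\brack 1}$, and similarly for $|\mb|$. Writing $x=r+x'$, we have ${x\brack 1}-{r\brack 1}=q^{r}{x'\brack 1}\leq q^{s-1}{x'\brack 1}$. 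This is exactly where $S\nsubseteq G$ produces the saving of one power of $q$.

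\textbf{Bounding $\mc$.} For $W\in\mc$ we get $\dim(W\cap G)=1$. As in Lemma \ref{2604212}, the point $P=W\cap G$ determines $W$: its images in $\bar A$ and $\bar B$ under the projections along the direct decomposition $\bar A\oplus\bar B$ determine $\bar W$. Inside $W+S$, the lines $W\cap A$ and $W\cap B$ are then recovered as the intersections of $P+B'$ with $A$ and of $P+A'$ with $B$, after choosing suitable complements $A'$, $B'$. Moreover $P\nsubseteq (G\cap A)+(G\cap B)$. So $|\mc|$ is at most the number of points of $G$ outside $(G\cap A)+(G\cap B)$, which gives
$$|\mc|\leq {k\brack 1}-{x+y-r\brack 1}.$$

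\textbf{Main obstacle.} Summing the three bounds should reduce everything to a numerical optimisation in $(x',y')=(x-r,\,y-r)$, subject to $0\leq x',y'\leq k-s-1$ and $x'+y'\leq k-r$. The target is the analogue of Lemma \ref{2604221} with $k$ replaced by $k-s$ and an overall factor $q^{2s-1}$. I expect this to be the hardest step. The $\mc$-term, roughly $q^{k-1}$, is not naturally multiplied by $q^{s-1}$, so it has to be absorbed using the slack $q^{2s-1}\cdot q$ in the bound and the hypothesis $k\geq s+2$. The equality cases, where $(x',y')$ is $(1,k-s-1)$ or its reverse, again need the overlap $\ma\cap\mb\neq\emptyset$ to obtain strict inequality. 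I would first try the crude bound ${k\brack 1}\leq q^{s}{k-s\brack 1}+{s\brack 1}$ and then compare term by term, deferring the remaining calculation to Section \ref{26060110}.
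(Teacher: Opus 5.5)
You reduce correctly to counting $W\in\mw_{2}(A,B)$ meeting a fixed $G$ with $A\cap B\nsubseteq G$, and your bounds on $\ma$ and $\mb$ agree with the paper's. The saving indeed comes from $r\le s-1$. The gap is the bound on $\mc$, which is false once $s\ge 1$. The point $P=W\cap G$ determines only $\bar W$, not $W$. If $P=\langle a+b\rangle\nsubseteq A\cup B$ with $a\in A$, $b\in B$, then each $\langle a+c,\,b-c\rangle$ with $c\in A\cap B$ lies in $\mw_{2}(A,B)$ and contains $P$. These are $q^{s}$ distinct subspaces, and your ``choice of suitable complements'' is exactly this ambiguity. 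Nor is $P\nsubseteq (G\cap A)+(G\cap B)$ forced.

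Here is a counterexample. Let $s=1$, $k=3$, $A=\langle e_0,e_1,e_2\rangle$, $B=\langle e_0,e_3,e_4\rangle$ and $G=\langle e_1,e_3,e_2+e_4\rangle$. Then $x=y=1$, $r=0$, and your bound reads $|\mc|\le q^{2}$. Consider the $q^{2}$ points $\langle \alpha e_1+\beta e_3+e_2+e_4\rangle$ of $G$. For each of them, all $q$ subspaces $\langle \alpha e_1+e_2+ce_0,\ \beta e_3+e_4-ce_0\rangle$, $c\in\mathbb{F}_q$, belong to $\mc$. Hence $|\mc|\ge q^{3}$. Moreover, the point $\langle e_1+e_3\rangle\subseteq (G\cap A)+(G\cap B)$ lies on the $q-1$ members $\langle e_1+ce_0,\,e_3-ce_0\rangle$, $c\neq 0$, of $\mc$.

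What you need is the local count the paper uses. Let $C$ be a point of $G$ outside $(G\cap A)\cup(G\cap B)$, and let $W=A_1+B_1\in\mc_C$. Then $W=A_1+C=B_1+C$, so $A_1\subseteq A\cap(B+C)$ and $B_1\subseteq B\cap(A+C)$. These two subspaces are $(s+1)$-dimensional and together span an $(s+2)$-space. In that space exactly $q^{s}$ two-dimensional subspaces contain $C$ and meet $A\cap B$ trivially. Hence $|\mc|\le q^{s}({k\brack 1}-{x\brack 1}-{y\brack 1}+{r\brack 1})$.

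This term can be of order $q^{k+s-1}$ rather than your $q^{k-1}$, as the example shows. For $k=s+2$ that is the same order as the target, so it cannot be absorbed into slack, and the last step is a genuine optimisation. The paper first replaces $G$ by a $k$-subspace of $A+B$ that contains $G\cap(A+B)$ but not $A\cap B$; this gives $x,y\ge s$. It then writes the three-term sum as $q^{r+s}({k-r\brack 1}+({k-s\brack 1}-1)({x-r\brack 1}+{y-r\brack 1}))$ and uses $q^{x-r}+q^{y-r}\le q^{k-s}+q^{s-r}$. Maximising at $r=s-1$ gives exactly $q^{2s-1}{k-s\brack 1}({k-s\brack 1}+q)$ (Lemma \ref{2605292}). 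Only $\le$ is needed here, so your concern about equality cases and the overlap $\ma\cap\mb\neq\emptyset$ does not arise.
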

\begin{proof}

	We just need to show for each $G\in {V\brack k}$ with $A\cap B\nsubseteq G$, 
	\begin{equation}\label{2604252}
		\left|\left\{W\in\mw_{2}(A,B):  \dim(W\cap G)> 0\right\}\right|\leq q^{2s-1}{k-s\brack 1}\left({k-s\brack 1}+q\right).
	\end{equation}
	Indeed, for every such $G$, the family
	$$\left\{L\in{A+B\brack k}: G\cap(A+B)\subseteq L,\ A\cap B \nsubseteq L\right\}$$
	is non-empty. Therefore, it is enough to prove (\ref{2604252}) for all $G\in {A+B\brack k}$ with $A\cap B\nsubseteq G$. 
	We remark here  $\dim(W)=2$, $W\cap A=A_1$ and $W\cap B=B_1$ for each  $W=A_{1}+B_{1}\in\mw_{2}(A,B)$.

	Pick $G\in {A+B\brack k}$ with $A\cap B\nsubseteq G$. In the remaining of this proof, set $\dim(G\cap A\cap B)=w$, $\dim(G\cap A)=x$, $\dim(G\cap B)=y$,  
	$$\ma=\left\{W\in \mw_{2}(A,B): W\cap A\subseteq G\right\},$$
	$$\mb=\left\{W\in \mw_{2}(A,B): W\cap B\subseteq G\right\},$$
	$$\mc=\left\{W\in \mw_{2}(A,B):  W\cap A\nsubseteq G,\ W\cap B\nsubseteq G,\ \dim(W\cap G)>0\right\}.$$
	Note that  $w$ can be $0$.  Moreover, by $G\subseteq A+B$, we know $x\geq s$ and $y\geq s$.

	For each  $W=A_{1}+B_{1}\in\mw_{2}(A,B)$, if $W\cap A=A_{1}\subseteq G$, then $A_1\in {G\cap A\brack 1}$. Moreover, we have  $A_{1}\nsubseteq G\cap A\cap B$ and $B_{1}\nsubseteq A\cap B$. Hence
	$$\ma\subseteq \left\{A_{1}+B_{1}: A_{1}\in {G\cap A\brack 1}, \ B_{1}\in {B\brack 1},\ A_{1}\nsubseteq G\cap A\cap B,\ B_{1}\nsubseteq A\cap B\right\}.$$
	This implies $\left|\ma\right|\leq q^{w+s}{x-w\brack 1}{k-s\brack 1}$. Similarly, we have $\left|\mb\right|\leq q^{w+s}{y-w\brack 1}{k-s\brack 1}$.

	We claim that $\left|\mc\right|\leq q^{s}\left( {k\brack 1}-{x\brack 1}-{y\brack 1}+{w\brack 1}\right)$. If $\mc$ is empty, then the desired result is clear. Suppose that $\mc$ is non-empty.  Pick  $W\in \mc$. Then $\dim(W\cap G)=1$. We  further conclude  $W\cap G\nsubseteq A\cap G$. Otherwise $W\cap A=W\cap G\subseteq G$, a contradiction. Similarly, we know $W\cap G\nsubseteq B\cap G$. Hence
	$$\mc=\bigcup_{C\in {G\brack 1}\bs \left({G\cap A\brack 1}\cup {G\cap B\brack 1}\right)}\mc_{C}.$$
	Choose $C$ from the above family  such that $\left|\mc_{C}\right|$ is maximum. Since $\mc$ is non-empty, we know $\mc_{C}$ is non-empty.  For each $A_1+B_1\in \mc_{C}$, by $C\nsubseteq A$ and $C\nsubseteq B$, we know   $A_1+B_1=A_1+C=B_1+C$, which implies 
	$$A_1+B+C=B_1+B+C=B+C,\ \ 
	 B_1+A+C=A_1+A+C=A+C.$$
	Hence $A_1\subseteq B+C$ and $B_1\subseteq A+C$. Set $A_C=A\cap (B+C)$ and $B_C=B\cap (A+C)$. Then  $$\mc_{C}\subseteq \left\{ W\in {A_C+B_C\brack 2}: C\subseteq W,\ \dim(W\cap A\cap B)=0\right\}. $$
	Note that $\dim(A+C)=\dim(B+C)=k+1$. Then $\dim(A_C)=\dim(B_C)=s+1$. It follows that $\dim(A_C+B_C)=s+2$. This together with $\dim(C\cap A\cap B)=0$ yields $\left|\mc_{C}\right|\leq q^{s}$.  Therefore, we know $\left|\mc\right|\leq q^{s}\left( {k\brack 1}-{x\brack 1}-{y\brack 1}+{w\brack 1}\right)$,
	as required.

	According to the above  discussion, we have the following inequality.
	\begin{equation*}
		\left|\ma\right|+\left|\mb\right|+\left|\mc\right|\leq\ q^{w+s}{k-s\brack 1}\left({x-w\brack 1}+{y-w\brack 1}\right)+q^{s}\left( {k\brack 1}-{x\brack 1}-{y\brack 1}+{w\brack 1}\right).
	\end{equation*}
	Recall that $0\leq w\leq s-1$ and $s\leq x,y\leq k-1$.  Since $(G\cap A)+(G\cap B)$ is a subspace of  $G$, we get  $x+y-w\leq k$. From Lemma \ref{2605292}, we obtain $\left|\ma\right|+\left|\mb\right|+\left|\mc\right|\leq q^{2s-1}{k-s\brack 1}\left({k-s\brack 1}+q\right)$.  This together with $$\left\{W\in\mw_{2}(A,B): \dim(W\cap G)>0\right\}=\ma\cup\mb\cup \mc$$ 
	yields (\ref{2604252}). The proof is completed.
\end{proof}

We proceed by estimating the sum of sizes of non-trivial  cross-$1$-intersecting families in term of their $1$-covering numbers.

\begin{as}\label{2606072}
	Let $\ell\geq k\geq 2$ and $(k,\ell)\neq (2,2)$. Assume  that $n\geq 2k+3\delta_{2,q}+\delta_{3,q}$ when $\ell=k$, and $n\geq k+\ell+1+2\delta_{2,q}-\delta_{2,q}\delta_{2,k}$ when $\ell> k$. Suppose that $\mf\subseteq {V\brack k}$ and $\mg\subseteq {V\brack \ell}$ are non-trivial cross-$1$-intersecting families. 
\end{as}

\begin{lem}\label{26204271}
	Let notations be as in Assumption \ref{2606072}.   Then $\left|\mf\right|+\left|\mg\right|<h_{1}(n,k,\ell,1)$ if one of the following holds: 
	\begin{itemize}
		\item[\normalfont{(i)}]  $k=\ell$, and  $\tau_{1}(\mf)=\tau_{1}(\mg)=2$  or $\min\{\tau_{1}(\mf), \tau_{1}(\mg)\}\geq 3$;
		\item[\normalfont{(ii)}] $k<\ell$ and  $\tau_{1}(\mf)\geq 3$.
	\end{itemize}
\end{lem}
\begin{proof} 

(i) By Corollary \ref{2605191}, we have $\left|\mf\right|+\left|\mg\right|\leq f_{1}(n,k,k,1,\tau_{1}(\mf), \tau_{1}(\mg))$. It is sufficient to show $f_{1}(n,k,k,1,\tau_{1}(\mf), \tau_{t}(\mg))<h_{1}(n,k,k,1)$.

Suppose  $\tau_{1}(\mf)=\tau_{1}(\mg)=2$. This together with Lemma \ref{2605291} (i) yields $f_{1}(n,k,k,1,2, 2)<h_{1}(n,k,k,1)$.  Suppose  $\min\{\tau_{1}(\mf), \tau_{1}(\mg)\}\geq 3$. By Lemma \ref{2604281}, we know $$f_{1}(n,k,k,1,\tau_{1}(\mf),\tau_{1}(\mg))\leq f_{1}(n,k,k,1,3,k).$$ It follows from Lemma \ref{2605291} (i) that    $f_{1}(n,k,k,1,\tau_{1}(\mf),\tau_{1}(\mg))<h_{1}(n,k,k,1)$, as required.

(ii)  By $\tau_{1}(\mf)\geq 3$ and  $ \tau_{1}(\mg)\geq 2$,  Corollary \ref{2605191} implies $\left|\mf\right|+\left|\mg\right|\leq f_{2}(n,k,\ell,1,3,2)$. From Lemma \ref{2605291} (ii), we obtain  the desired result.
\end{proof}

\begin{lem}\label{2606064}
	Let notations be as in Assumption \ref{2606072}. 
	If $\tau_{1}(\mf)=2$ and $\mf$ is not $1$-intersecting,   then $\left|\mf\right|+\left|\mg\right|\leq h_{1}(n,k,\ell,1)$. 
	Moreover, equality holds if and only if $(\mf, \mg)$ is isomorphic to $(\mf_{1}, \mg_{1})$.
\end{lem}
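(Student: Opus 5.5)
Since $\mf$ is not $1$-intersecting, I will pick $A,B\in\mf$ with $\dim(A\cap B)=0$. Every $G\in\mg$ meets both $A$ and $B$ nontrivially, so $\mg\subseteq\mh(A,B;\ell,1)=\mg_{1}$. If $\mf=\{A,B\}$ we are done immediately: $|\mf|+|\mg|\leq 2+|\mg_{1}|=h_{1}(n,k,\ell,1)$, with equality exactly when $\mg=\mg_{1}$, which is $(\mf_{1},\mg_{1})$ up to isomorphism. So the real work is the case $|\mf|\geq 3$, where I aim for strict inequality. The idea is that any additional $F\in\mf\setminus\{A,B\}$ forces $\mg$ to avoid a large portion of $\mg_{1}$, and this loss should exceed the bounded gain in $|\mf|$.

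To bound $|\mf|$, I use $\tau_{1}(\mf)=2$. Fix a $2$-dimensional $1$-cover $T$ of $\mf$. Since $T$ meets $A$ and $B$ and $A\cap B=0$, we get $T=A_{1}+B_{1}\in\mw_{1}(A,B)$. Every $F\in\mf$ meets $T$. Combined with Proposition \ref{2604154} (with $\tau_{1}(\mg)\geq 2$ by non-triviality), this gives $|\mf|\leq{2\brack 1}{\ell\brack 1}{n-2\brack k-2}$ only in the crude sense. The refined route is to bound $|\mg|$ in terms of $F$.

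To bound $|\mg|$, take $F\in\mf\setminus\{A,B\}$. Each $G\in\mg$ meets $A$, $B$ and $F$. I split $\mg_{1}$ according to the $2$-space $A_{1}+B_{1}$ it contains, that is, via $\mw_{1}(A,B)$. The members of $\mg_{1}$ containing some $W\in\mw_{1}(A,B)$ with $W\cap F\neq 0$ number at most about $|\{W\in\mw_{1}(A,B):\dim(W\cap F)>0\}|\cdot{n-2\brack \ell-2}$. By Lemma \ref{2604212}, applied with $F\neq A,B$, this family of $W$ has size at most ${k-1\brack 1}({k\brack 1}+q)$. The remaining members of $\mg$ contain some $W$ disjoint from $F$ but must still meet $F$, so they contain a $3$-space and contribute roughly ${k\brack 1}^{3}{n-3\brack \ell-3}$. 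Together this gives
\[
|\mf|+|\mg|\leq |\mf|+{k-1\brack 1}\Bigl({k\brack 1}+q\Bigr){n-2\brack \ell-2}+{k\brack 1}^{3}{n-3\brack \ell-3}.
\]
The leading term ${k\brack 1}^{2}{n-2\brack \ell-2}$ of $h_{1}$, given by Lemma \ref{2604173}, exceeds ${k-1\brack 1}({k\brack 1}+q){n-2\brack \ell-2}$ by roughly $q^{k-1}{k\brack 1}{n-2\brack \ell-2}$. For $\ell\geq k$ and $n\geq k+\ell+1$ this should dominate $|\mf|\leq{2\brack 1}{\ell\brack 1}{n-2\brack k-2}$ plus the $3$-space term. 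For $\ell>k$ one can also use $|\mf|\leq {n-2\brack k-2}$-type bounds, since $\mf$ is covered by $T$.

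The main obstacle is the final numeric comparison. It has to hold down to the stated thresholds, including $q=2,3$ and the boundary case $k=2$. In the tight cases I would sharpen the estimate by using Lemma \ref{2604241}-type counting, or by choosing $F$ cleverly, for instance one meeting $A$ or $B$ nontrivially. The comparison itself would be deferred to a technical inequality in the style of Section~\ref{26060110}.
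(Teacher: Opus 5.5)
Your treatment of $\mf=\{A,B\}$ is the paper's. For $|\mf|\ge 3$ your estimate of $|\mg|$ is sound: you split $G\in\mg$ according to whether it contains some $W\in\mw_{1}(A,B)$ meeting a fixed $F\in\mf\setminus\{A,B\}$, and bound the number of such $W$ via (the proof of) Lemma \ref{2604212}. This gives the same quantity the paper obtains from Corollary \ref{2605192}. Combined with $|\mf|\le{2\brack 1}{\ell\brack 1}{n-2\brack k-2}$, your total is exactly $f_{3}(n,k,\ell,1,2,2,z)$ with $z={k-1\brack 1}({k\brack 1}+q)$. This is precisely the bound the paper uses when $\ell>k$, and there Lemmas \ref{260501} and \ref{2605295} supply the numerics you deferred, including $k=2$.

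The gap is the case $\ell=k$. There your $\mf$-term ${2\brack 1}{k\brack 1}{n-2\brack k-2}$ has the same order as the leading term ${k\brack 1}^{2}{n-2\brack k-2}$ of $h_{1}$. Your claim that the surplus of roughly $q^{k-1}{k\brack 1}{n-2\brack k-2}$ absorbs it, together with the $3$-space term, is false at the boundary.

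For $k=\ell=3$ and $q=2$ your bound is $48(2^{n-2}-1)+343$, while a direct count gives $h_{1}(n,3,3,1)=49(2^{n-2}-1)-194$. So your inequality fails for $n=9,10,11$, all of which Assumption \ref{2606072} allows.

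Your suggested remedies (sharper counting of the $W$'s, a clever choice of $F$) attack the wrong term. Take $F=A_{2}\oplus\langle a+b\rangle$ with $A_{2}\in{A\brack 2}$, $a\in A\setminus A_{2}$, $b\in B\setminus\{0\}$, and $\mf=\{A,B,F\}$. Then exactly $25$ of the $49$ spaces $W$ meet $F$, and with $25$ in place of $27$ your bound is still above $h_{1}$ at $n=9$. The slack lies in the $|\mf|$ estimate.

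The missing idea is the paper's split on $\tau_{1}(\mg)$ when $\ell=k$:
\begin{itemize}
\item If $\tau_{1}(\mg)=2$, Corollary \ref{2605191} gives $|\mf|+|\mg|\le f_{1}(n,k,k,1,2,2)<h_{1}(n,k,k,1)$, which is Lemma \ref{26204271}(i).
\item If $\tau_{1}(\mg)\ge 3$, Proposition \ref{2604154} with Lemma \ref{2604157} improves the $\mf$-term to the lower-order ${2\brack 1}{k\brack 1}^{2}{n-3\brack k-3}$. Lemma \ref{260501}, applied with $y=3$, then finishes the argument.
\end{itemize}
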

\begin{proof}
	Pick $A, B\in\mf$ with $\dim(A\cap B)=0$. Then $\mt_{1}(\mf)\subseteq \mw_{1}(A,B)$. Suppose $\left|\mf\right|=2$. Then $\mf=\{A, B\}$ and $\mg\subseteq \mh(A,B;\ell ,t)$, which implies $\left|\mf\right|+\left|\mg\right|\leq h_{1}(n,k,\ell,1)$. Moreover, equality holds if and only if $\mg= \mh(A,B;\ell ,t)$, i.e., $(\mf, \mg)$ is isomorphic to $(\mf_{1}, \mg_{1})$.
	
	Suppose $\left|\mf\right|\geq 3$. It is sufficient to show $\left|\mf\right|+\left|\mg\right|<h_{1}(n,k,\ell,1)$. By Lemma \ref{2604212}, we get 
	\begin{equation*}
		\left|\mt_{t}(\mf)\right|\leq {k-1\brack 1}\left({k\brack 1}+q\right)=:z.
	\end{equation*} We divide our proof into the following cases.  
	
	\medskip
	\noindent \textbf{Case 1.} $\ell=k$.
	\medskip
	
	If $\tau_{1}(\mg)=2$, then by Lemma \ref{26204271} (i), we know the desired result holds. Next, we assume $\tau_{1}(\mg)\geq 3$.  Corollary \ref{2605192} thereby implies	$\left|\mf\right|+\left|\mg\right|\leq f_{3}\left(n,k,k,1,2,3, z\right)$. 
	This together with  Lemma \ref{260501} yields  $\left|\mf\right|+\left|\mg\right|<h_{1}(n,k,k,1)$.
	
	\medskip
	\noindent \textbf{Case 2.} $\ell>k$.
	\medskip

	Note that $\tau_{1}(\mg)\geq 2$.  By Corollary \ref{2605192}, we have  $\left|\mf\right|+\left|\mg\right|\leq f_{3}\left(n,k,\ell,1,2,2, z\right)$.  It follows from Lemmas \ref{260501} and \ref{2605295} that $\left|\mf\right|+\left|\mg\right|<h_{1}(n,k,\ell,1)$.	 This finishes the proof.
\end{proof}

\begin{lem}\label{2606062}
		Let notations be as in Assumption \ref{2606072}. 
		If $\tau_{1}(\mf)=2$ and $\mf$ is $1$-intersecting,  then $\left|\mf\right|+\left|\mg\right|< h_{1}(n,k,\ell,1)$. 
 \end{lem}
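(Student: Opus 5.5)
Since $\mf$ is $1$-intersecting with $\tau_{1}(\mf)=2$ and $\dim(\cap_{F\in\mf}F)=0$, I would split according to whether $\mf$ is $(k-1)$-intersecting.

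\textbf{Case A: $\mf$ is $(k-1)$-intersecting.} By Proposition \ref{2605201}, $\mf\subseteq{C\brack k}$ and $\mg\subseteq\mm(C;\ell,1)$ for some $C\in{V\brack k+1}$. Hence $\left|\mf\right|+\left|\mg\right|\leq m_{1}(n,k,\ell,1)$. It then suffices to show $m_{1}(n,k,\ell,1)<h_{1}(n,k,\ell,1)$ in the range of Assumption \ref{2606072}. For this I would bound $m_{1}$ above by ${k+1\brack 1}+{k+1\brack 2}{n-2\brack \ell-2}$ and bound $h_{1}$ below using Lemma \ref{2604173}, namely $h_{1}>{k\brack 1}^{2}{n-2\brack \ell-2}\bigl(1-\tfrac{2}{(q^{2}-1)q^{n-k-\ell}}\bigr)$. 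The comparison then reduces to ${k+1\brack 2}<{k\brack 1}^{2}(1-\varepsilon)$. This holds because ${k+1\brack 2}/{k\brack 1}^{2}\approx q/(q+1)$, and since $n\geq k+\ell+1$, $\varepsilon$ is small.

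\textbf{Case B: $\mf$ is not $(k-1)$-intersecting.} Here I would locate the minimum $1$-covers. Every $T\in\mt_{1}(\mf)$ is $2$-dimensional. Pick $A,B\in\mf$ with $\dim(A\cap B)=s$, where $1\leq s\leq k-2$. Then a minimum cover $T$ must satisfy either $T\supseteq$ a $1$-subspace of $A\cap B$, or $T\in\mw_{2}(A,B)$.
\begin{itemize}
\item If every $T$ meets $A\cap B$, then, because $\dim(\cap\mf)=0$, some $F\in\mf$ avoids a given point of $A\cap B$. I would use this to bound $\left|\mt_{1}(\mf)\right|$.
\item If there are many covers in $\mw_{2}(A,B)$, then Lemma \ref{2604241} forces every $k$-dimensional $1$-cover, in particular every member of $\mf$, to contain $A\cap B$. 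This contradicts $\dim(\cap\mf)=0$ once $s\geq1$, which we have.
\end{itemize}
In either sub-case $\left|\mt_{1}(\mf)\right|\leq z'$ for an explicit $z'$, roughly ${s\brack 1}{n\brack 1}$-free, e.g.\ ${k\brack 1}{k-1\brack 1}+q^{2s-1}{k-s\brack 1}({k-s\brack 1}+q)$. I would then apply Corollary \ref{2605192} with $y=2$, or $y=3$ when $\ell=k$ and $\tau_{1}(\mg)\geq3$. When $\ell=k$ and $\tau_{1}(\mg)=2$, Lemma \ref{26204271} (i) applies directly.

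The main obstacle is the numerical comparison $f_{3}(n,k,\ell,1,2,y,z')<h_{1}(n,k,\ell,1)$, which I would handle analogously to Lemmas \ref{260501} and \ref{2605295}. The dominant term of $f_{3}$ is $z'{n-2\brack \ell-2}$, so $z'$ must be less than ${k\brack 1}^{2}$ by a constant factor. This is the delicate point: the bound on $\left|\mt_{1}(\mf)\right|$ must be sharp enough, especially for $q=2$ and small $k$. If the crude bound fails, I would refine it by noting that every cover must also meet a third member $F$ avoiding $A\cap B$, which cuts the count by a factor of order $q$.
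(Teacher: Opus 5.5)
Your architecture is the paper's: the split on whether $\mf$ is $(k-1)$-intersecting; Proposition~\ref{2605201} together with $m_1<h_1$ in the first case; the decomposition of $\mt_1(\mf)$ into covers meeting $A\cap B$ and covers lying in $\mw_2(A,B)$; the contrapositive use of Lemma~\ref{2604241} (every $F\in\mf$ is a $k$-dimensional $1$-cover of $\mt_1(\mf)$, and they cannot all contain $A\cap B$); and finally Corollary~\ref{2605192} with $y=2+\delta_{k,\ell}$ plus Lemma~\ref{26204271}~(i). The gap is the step you flag as the main obstacle. With the $z'$ you write down, the inequality you would need is false, not merely delicate. Take $q=2$ and $s=1$, which is allowed since $k\geq 3$ in this case. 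Then
\[
z'={k\brack 1}{k-1\brack 1}+2{k-1\brack 1}\left({k-1\brack 1}+2\right)=(2^{k-1}-1)(2^{k+1}+1)={k\brack 1}^{2}+2^{k-1}-2 .
\]
Hence $f_3(n,k,\ell,1,2,y,z')>z'{n-2\brack \ell-2}\geq {k\brack 1}^{2}{n-2\brack \ell-2}+2\geq h_1(n,k,\ell,1)$. The last step holds because counting pairs $(W,G)$ with $W\in\mw_1(A,B)$ and $W\subseteq G\in\mg_1$ gives $\left|\mg_1\right|\leq {k\brack 1}^{2}{n-2\brack \ell-2}$.

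The missing idea is already in your first bullet; you just did not extract the right number from it. For each point $P\subseteq A\cap B$, pick $F_P\in\mf$ with $P\not\subseteq F_P$, which exists because $\dim(\cap_{F\in\mf}F)=0$. Every $T\in\mt_1(\mf)$ through $P$ equals $P+(T\cap F_P)$. So at most ${k\brack 1}$ minimum covers pass through $P$, and at most ${s\brack 1}{k\brack 1}$ meet $A\cap B$; this is the bound the paper obtains from Lemma~\ref{2604155}. Your two bullets are not a case split but two bounds that hold simultaneously and are added. Together they give $z={s\brack 1}{k\brack 1}+q^{2s-1}{k-s\brack 1}\left({k-s\brack 1}+q\right)$. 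This is literally one of the values in the hypothesis of Lemma~\ref{260501}. That lemma then finishes both $\ell>k$ (with $y=2$) and $\ell=k$, $\tau_1(\mg)\geq 3$ (with $y=3$), with no new computation. The margin there is thin: for $q=2$ and $\ell>k$ the paper's estimates give about $0.9160$ against $11/12$. That is why the first term must be ${s\brack 1}{k\brack 1}\leq {k-2\brack 1}{k\brack 1}$ rather than ${k-1\brack 1}{k\brack 1}$.

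Your proposed fallback, using a third member $F$ avoiding $A\cap B$, is unnecessary. It is also unsupported, since $\dim(\cap_{F\in\mf}F)=0$ does not produce a member with $F\cap A\cap B=0$. A minor point in Case A: $n\geq k+\ell+1$ is not assumed when $\ell=k$ and $q\geq 4$, where only $n\geq 2k$ holds. The comparison still holds there, and it is exactly Lemma~\ref{2604301}.
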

\begin{proof}
	Suppose that $\mf$ is $(k-1)$-intersecting. Note that $\tau_{1}(\mf)=2$. Then $\dim(\cap_{F\in\mf}F)=0$. By Proposition \ref{2605201}, there exists $C\in{V\brack k+1}$ such that $\mf\subseteq{C\brack k}$ and $\mg\subseteq \mm(C;\ell,1)$.
	Hence $\left|\mf\right|+\left|\mg\right|\leq m_{1}(n,k,\ell,1)$. This together with Lemma \ref{2604301} yields $\left|\mf\right|+\left|\mg\right|<h_{1}(n,k,\ell, 1)$.

	Next, we  assume $1\leq \dim(A\cap B)\leq k-2$ for some $A, B\in\mf$.  This implies $k\geq 3$.  	In the remaining of this proof, write $s=\dim(A\cap B)$ and 
	$$\ma=\left\{T\in\mt_{1}(\mf): \dim(T\cap A\cap B)\geq 1\right\},\ 
	\mb=\left\{T\in\mt_{1}(\mf): \dim(T\cap A\cap B)=0\right\}.$$
	Note that $\ma=\bigcup_{H\in {A\cap B\brack 1}}\ma_{H}$. Pick $H_{0}\in {A\cap B\brack 1}$ such that $\left|\ma_{H}\right|\leq \left|\ma_{H_{0}}\right|$ fro any $H\in{A\cap B\brack 1}$.  By $\tau_{1}(\mf)=2$, there exists $F_{0}\in \mf$ such that $\dim(H_{0}\cap F_{0})=0$. It follows from Lemma \ref{2604155} that $\left|\ma_{H_{0}}\right|\leq {k\brack 1}\left|\ma_{U}\right|\leq {k\brack 1}$ for some $2$-dimensional subspace $U$ of $V$. Hence
	$\left|\ma\right|\leq {s\brack 1}{k\brack 1}$.
	One can check  $\mb\subseteq\mw_{2}(A,B)$. This together with $\dim(\cap_{F\in\mf}F)=0$ and Lemma \ref{2604241} yields $\left|\mb\right|\leq q^{2s-1}{k-s\brack 1}\left({k-s\brack 1}+q\right)$. 
	From $\mt_{t}(\mf)=\ma\cup\mb$, we obtain
	\begin{equation*}
		\left|\mt_{1}(\mf)\right|\leq {s\brack 1}{k\brack 1}+q^{2s-1}{k-s\brack 1}\left({k-s\brack 1}+q\right)=:z.
	\end{equation*} 
	We divide our proof into the following cases.

	\medskip
	\noindent \textbf{Case 1.} $\ell=k$.
	\medskip
	
	If $\tau_{1}(\mg)=2$, then by Lemma \ref{26204271} (i), we have $\left|\mf\right|+\left|\mg\right|<h_{1}(n,k,k,1)$. The required result holds. Suppose $\tau_{1}(\mg)\geq 3$. It follows from Corollary \ref{2605192} that 
	$\left|\mf\right|+\left|\mg\right|\leq f_{3}(n,k,k,1,2,3, z)$.  By  Lemma \ref{260501}, we have $\left|\mf\right|+\left|\mg\right|<h_{1}(n,k,k,1)$. 
 
	\medskip
	\noindent \textbf{Case 2.} $\ell>k$.
	\medskip
	
	By $\tau_{1}(\mg)\geq 2$, Corollary \ref{2605192} yields 	$\left|\mf\right|+\left|\mg\right|\leq f_{3}(n,k,\ell,1,2,2, z)$. From   Lemma \ref{260501}, we obtain $\left|\mf\right|+\left|\mg\right|<h_{1}(n,k,\ell,1)$.  This finishes the proof.	
\end{proof}

\begin{proof}[\textnormal{\textbf{Proof of Theorem \ref{1}} (i) and (ii)}]
	
	(i) By Lemma \ref{26204271} (i) and (\ref{2606061}), we know $\tau_{1}(\mf)=2$   or  $\tau_{1}(\mg)=2$. 
	Suppose $\tau_{1}(\mf)=2$.  It follows from (\ref{2606061}) and Lemma \ref{2606062} that $\mf$ is not $1$-intersecting. Moreover, by Lemma \ref{2606064} and (\ref{2606061}), we have $(\mf,\mg)$ is isomorphic  to $(\mf_{1},\mg_{1})$. 	
	Suppose $\tau_{1}(\mg)=2$. By symmetry, we have $(\mg,\mf)$ is isomorphic  to $(\mf_{1},\mg_{1})$.

	(ii)  By Lemma \ref{26204271} (ii) and (\ref{2606061}), we know $\tau_{1}(\mf)=2$. By Lemma \ref{2606062}, we get $\mf$ is not $1$-intersecting.  It follows from Lemma \ref{2606064} and (\ref{2606061}) that $(\mf,\mg)$ is isomorphic  to $(\mf_{1},\mg_{1})$.
\end{proof}

\subsection{Proof of Theorem \ref{1} (iii)} \label{2606076}

Before presenting the proof of Theorem \ref{1} (iii), we establish the following auxiliary result.

\begin{lem}\label{2604131}
	Let  $n\geq t+3$. Suppose that $\mf,\mg\subseteq {V\brack t+1}$ are cross-$t$-intersecting.   If  $\mg$ is not $t$-intersecting and $\left|\mathcal{G}\right|\geq 3$, then $\left|\mathcal{F}\right|\leq 2q+1$. 
\end{lem}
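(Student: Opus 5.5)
The plan is to show first that $\mf$ lies inside a $(q+1)\times(q+1)$ grid determined by two members of $\mg$, and then to use a third member $G_3\in\mg$ to cut this grid down to at most $2q+1$ elements.

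\textbf{Step 1 (the grid).} Since $\mg$ is not $t$-intersecting, pick $G_1,G_2\in\mg$ with $\dim(G_1\cap G_2)\le t-1$. Take any $F\in\mf$. Then $\dim(F\cap G_i)\ge t$ for $i=1,2$, and $\dim F=t+1$, so
$$\dim(F\cap G_1\cap G_2)\ge t+t-(t+1)=t-1 .$$
Hence $D:=G_1\cap G_2$ has dimension exactly $t-1$ and $D\subseteq F$. Moreover $F\cap G_1$ and $F\cap G_2$ are distinct $t$-spaces containing $D$, so $F=(F\cap G_1)+(F\cap G_2)$. Thus $F$ is determined by the pair $(A_1,B_1)$, where $A_1$ is a $t$-subspace with $D\subseteq A_1\subseteq G_1$ and $B_1$ is a $t$-subspace with $D\subseteq B_1\subseteq G_2$; in particular $|\mf|\le (q+1)^2$.

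Now pass to the quotient $V/D$ and work projectively. There $G_1/D$ and $G_2/D$ become two skew lines $L_1,L_2$ spanning a solid $\Sigma=(G_1+G_2)/D\cong \mathrm{PG}(3,q)$. The members of $\mf$ become lines of $\Sigma$ that are transversals of $L_1$ and $L_2$, and there are $(q+1)^2$ such transversals.

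\textbf{Step 2 (case $D\subseteq G_3$).} Fix $G_3\in\mg\setminus\{G_1,G_2\}$ and first suppose $D\subseteq G_3$. Then $L_3=G_3/D$ is a projective line different from $L_1,L_2$, and the condition $\dim(F\cap G_3)\ge t$ says exactly that the line $F/D$ meets $L_3$. So we must count transversals of $L_1,L_2$ that also meet $L_3$.
\begin{enumerate}
\item If $L_3\not\subseteq\Sigma$, then $L_3\cap\Sigma$ is at most a point $p$. When $p\notin L_1\cup L_2$ there is at most one transversal through $p$; when $p\in L_1\cup L_2$ there are $q+1$.
\item If $L_3\subseteq\Sigma$ and $L_3$ is skew to both $L_1$ and $L_2$, the common transversals of three pairwise skew lines number at most $q+1$ (a regulus).
\item If $L_3\subseteq\Sigma$ and $L_3$ meets $L_1$ in a point $p$, a transversal meeting $L_3$ either passes through $p$ ($q+1$ lines) or lies in the plane $\langle L_1,L_3\rangle$, hence passes through the point $\langle L_1,L_3\rangle\cap L_2$ ($q+1$ lines). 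These two sets share one line, giving at most $2q+1$.
\item The same union bound of at most $2q+1$ applies if $L_3$ meets both $L_1$ and $L_2$.
\end{enumerate}
In every subcase $|\mf|\le 2q+1$.

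\textbf{Step 3 (case $D\not\subseteq G_3$), the expected main obstacle.} Here I would use the two facts available for every $F\in\mf$. First, $F\cap G_3$ is a $t$-space of $F$ not containing $D$, so $F=D+(F\cap G_3)\subseteq D+G_3$. Second, since $\dim(F\cap G_1)=t$ and $\dim(F\cap G_3)\ge t$ inside the $(t+1)$-space $F$, we get $\dim(G_1\cap G_3)\ge t-1$, and similarly $\dim(G_2\cap G_3)\ge t-1$.

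With these, the first thing I would try is to show that the lines $F/D$ all lie in the subspace $((D+G_3)\cap(G_1+G_2))/D$. Then I would split according to the dimension of this subspace: if it is at most a plane of $\Sigma$, the transversals of $L_1,L_2$ inside it number at most $2q+1$ by the same pencil count as in Step 2. If it is all of $\Sigma$, I would replace $D$ by $D':=G_1\cap G_3$ or $G_2\cap G_3$, rerun Step 1 with the pair $(G_1,G_3)$ or $(G_2,G_3)$ whenever that intersection has dimension $t-1$, and so reduce to the situation of Step 2.
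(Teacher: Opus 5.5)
Your Steps 1 and 2 are correct. Step 2 takes a different route from the paper. In its Case 1, where every member of $\mg$ contains $C=G_1\cap G_2$, the paper quotients by $C$ and applies Lemma~\ref{2604212} with $k=2$. You instead count transversals in $\mathrm{PG}(3,q)$ directly: a regulus when $L_3$ is skew to both lines, and two pencils sharing one line when $L_3$ meets one of them. This is self-contained and valid, and splitting on a single fixed $G_3$ rather than on all of $\mg$ is harmless.

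The gap is in Step 3. You leave open the branch where $((D+G_3)\cap(G_1+G_2))/D$ is all of $\Sigma$, and the fallback you propose for it is circular. Suppose $D'=G_1\cap G_3$ has dimension $t-1$ and you rerun Step 1 with $(G_1,G_3)$. The only other member of $\mg$ you have is $G_2$. But $D'\subseteq G_2$ would force $D'\subseteq G_1\cap G_2=D$, hence $D=D'\subseteq G_3$, contrary to the case assumption. So you land back in the Step 3 situation, not in Step 2, and the same happens with the pair $(G_2,G_3)$.

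What is missing is a one-line dimension count showing that this branch is empty. Take any $F\in\mf$. Since $D\subseteq F$, we have $D\cap G_3=D\cap(F\cap G_3)$. Inside the $(t+1)$-space $F$ this has dimension at least $(t-1)+t-(t+1)=t-2$. Hence $\dim(D+G_3)\le t+2$, so $\pi:=(D+G_3)/D$ is at most a projective plane, and every line $F/D$ lies in $\pi\cap\Sigma$. Your pencil count then finishes the case:
\begin{itemize}
\item If $\pi\subseteq\Sigma$ and $L_1\subseteq\pi$, the transversals lying in $\pi$ are the $q+1$ lines of $\pi$ through the point $L_2\cap\pi$; the same holds with $L_1,L_2$ swapped.
\item In every other case at most one transversal lies in $\pi\cap\Sigma$.
\end{itemize}
So $|\mf|\le q+1$ here. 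This is exactly the paper's Case 2, which derives $\dim(C\cap G_0)=t-2$ and $F=C+(F\cap G_0)$, and obtains $|\mf|\le 1$ or $|\mf|\le q+1$ according as $G_0\not\subseteq S$ or $G_0\subseteq S$.
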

\begin{proof}
	Pick $G_{1}, G_{2}\in\mg$ with $\dim(G_{1}\cap G_{2})<t$. Write $C=G_{1}\cap G_{2}$ and $S=G_{1}+G_{2}$. Since $\mf$ and $\mg$ are cross-$t$-intersecting, we have $\dim(C)=t-1$ and $C\subseteq F\subseteq S$ for any $F\in\mf$.

	\medskip
	\noindent \textbf{Case 1.} $C\subseteq G$ for any $G\in\mg$.
	\medskip

	Consider the following map
	$$\sigma: \left\{H\in {V\brack t+1}: C\subseteq H\right\}\longrightarrow {V/C\brack 2},\ H\longmapsto H/C.$$
	Note that $\sigma$ is a bijection. Then
	$\left|\mf\right|=\left|\left\{\sigma(F): F\in \mf\right\}\right|$ and $\left|\mg\right|=\left|\left\{\sigma(G): G\in \mg\right\}\right|$.   Since $\mf$ and $\mg$ are also cross-$t$-intersecting,  families $\left\{\sigma(F): F\in \mf\right\}$ and $\left\{\sigma(G): G\in \mg\right\}$ are cross-$1$-intersecting.  Moreover, we  know $\left\{\sigma(G): G\in \mg\right\}$ is not $1$-intersecting due to $G_{1}, G_{2}\in \mathcal{G}$. 
	 Therefore, it suffices to prove this  case for $t=1$. 	 
	  Indeed, since $\mf\subseteq \mw_{1}(G_{1},G_{2})$ and all members in $\mg$ are $2$-dimensional $1$-covers of $\mf$, by Lemma \ref{2604212}, we have $\left|\mf\right|\leq 2q+1$.

	\medskip
	\noindent \textbf{Case 2.} $C\nsubseteq G_{0}$ for some $G_{0}\in\mg$.
	\medskip

	For each $F\in\mf$, by $C\subseteq F$ and $\dim(F\cap G_{0})\geq t$, we have 
	$$t+1\geq \dim(C+(F\cap G_{0}))=\dim(F\cap G_{0})-\dim(C\cap G_{0})+t-1\geq t+1,$$
	which implies $F=C+(F\cap G_{0})$, $\dim(F\cap G_{0})=t$ and $\dim(C\cap G_{0})=t-2$. Hence
	\begin{equation}\label{2606071}
		\mf\subseteq \left\{C+T:T\in {G_{0}\cap S\brack t},\ C\cap G_{0}\subseteq T\right\}.
	\end{equation}
	Suppose $\dim(G_{0}\cap S)\leq t$. By $\mf\neq \emptyset$  and (\ref{2606071}), we have $\dim(G_{0}\cap S)=t$ and $\left|\mf\right|=1$, as required. Suppose $G_{0}\subseteq S$. Then $\min\{\dim(G_{0}\cap G_{1}), \dim(G_{0}\cap G_{2})\}\geq t-1$. We claim that $\min\{\dim(G_{0}\cap G_{1}), \dim(G_{0}\cap G_{1})\}=t-1$. Otherwise, $\dim((G_{0}\cap G_{1})+(G_{0}\cap G_{2}))\geq t+2$, a contradiction. W.l.o.g., we assume $\dim(G_{0}\cap G_{1})=t-1$.	
	Pick $T\in {G_{0}\brack t}$ with $C\cap G_{0}\subseteq T$. Then $C\cap G_{0}=C\cap T$.	
	If $C+T\in\mf$, then by $C+(T\cap G_{1})=(C+T)\cap G_{1}$, we have
	\begin{equation*}
		\begin{aligned}
			t\leq \dim(C+(T\cap G_{1}))
			=\dim(T\cap G_{1})+1\leq \dim(G_{0}\cap G_{1})+1=t,
		\end{aligned}
	\end{equation*}
 which implies $T\cap G_{1}=G_{0}\cap G_{1}$. It follows from (\ref{2606071})  that
 $$\mf\subseteq \left\{C+T:T\in {G_{0}\brack t},\ G_{0}\cap G_{1}\subseteq T\right\}.$$
 We further conclude $\left|\mf\right|\leq q+1$. The desired result holds.
\end{proof}

\begin{proof}[\textnormal{\textbf{Proof of Theorem \ref{1}} (iii)}]

	Suppose for contradiction   neither $\mf$ nor $\mg$ is $t$-intersecting. This implies $n\geq t+3$.  If $\left|\mf\right|\geq 3$ and $\left|\mg\right|\geq 3$, then by Lemma \ref{2604131}, we have $\left|\mf\right|+\left|\mg\right|\leq 4q+2$. Note that $4q+2<2{t+2\brack 1}=m_{1}(n,t+1,t+1,t)$. This  contradicts to (\ref{2606079}). If $\mf=\{A, B\}$, where $\dim(A\cap B)<t$, then by $\mg$ is non-empty, we get $\dim(A\cap B)=t-1$. Then  
	$$\left| \mg\right|\leq \left| \left\{A_{1}+B_{1}: A_{1}\in{A\brack t},\ A\cap B\subseteq A_{1},\ B_{1}\in{B\brack t},\ A\cap B\subseteq  B_{1}\right\}\right|\leq {2\brack 1}^{2}.$$
	It follows that $\left|\mf\right| +\left|\mg\right|\leq q^{2}+2q+3<m_{1}(n,t+1,t+1,t)$, a contradiction to (\ref{2606079}). If $\mg=\{C,D\}$, where $\dim(C\cap D)<t$, then by symmetry, we derive $\left|\mf\right| +\left|\mg\right|<m_{1}(n,t+1,t+1,t)$, a contradiction to (\ref{2606079}). Hence one of $\mf$ and $\mg$ is $t$-intersecting. 
	
	W.l.o.g., we may assume that $\mf$ is $t$-intersecting.  By Proposition \ref{2605201} and (\ref{2606079}),  there exists $C\in{ V\brack t+2}$ such that $\mf=\mg= {C\brack t+1}$, i.e., $(\mf,\mg)$ is isomorphic to $(\mf_{2},\mg_{2})$.
\end{proof}

\subsection{Proof of Theorem \ref{1} (iv) and (v)} \label{2606077}

The idea of proving Theorem \ref{1} (iv) and (v) is similar to that of (i) and (ii). To simplify the exposition, the following assumptions are introduced.

\begin{as}\label{2606078}
	Let $\min\{k,\ell\}\geq t+1\geq 3$ and $(k,\ell)\neq (t+1,t+1)$.  Assume that $n\geq k+\ell+1$ when $\min\{k,\ell\}=t+1$, and  $n\geq k+\ell-t+1+3\delta_{2,q}+\delta_{3,q}+\delta_{2,q}\delta_{2,t}$ when $\min\{k,\ell\}\geq  t+2$. Suppose that $\mf\subseteq{V\brack k}$ and $\mg\subseteq{V\brack \ell}$ are  non-trivial cross-$t$-intersecting. 
\end{as}

\begin{lem}\label{2605052}
	Let notations be as in Assumption \ref{2606078}. The following hold.
	\begin{itemize}
		\item[\normalfont(i)] If $\ell>k=t+1$ and $\tau_{t}(\mf)\geq t+2$, then $\left|\mf\right|+\left|\mg\right|< m_{1}(n,t+1,\ell,t)$.
		\item[\normalfont(ii)] If $\min\{k,\ell\}\geq  t+2$, and $\tau_{t}(\mf)=\tau_{t}(\mg)=t+1$ or  $\min\{\tau_{t}(\mf), \tau_{t}(\mg)\}\geq t+2$, then $\left|\mf\right|+\left|\mg\right|< \max\{m_{1}(n,k,\ell,t),m_{1}(n,\ell,k,t)\}$.
	\end{itemize}
\end{lem}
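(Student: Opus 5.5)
We argue exactly as in the proof of Lemma \ref{26204271}: bound $|\mf|+|\mg|$ using Corollary \ref{2605191} in terms of the $t$-covering numbers, and then compare the resulting function with $m_{1}$ through the lower bounds in Lemma \ref{2604191}. Since $\mf$ and $\mg$ are non-trivial, both $\tau_{t}(\mf)\geq t+1$ and $\tau_{t}(\mg)\geq t+1$ hold. Any member of $\mg$ is a $t$-cover of $\mf$, and vice versa, so $\tau_{t}(\mf)\leq \ell$ and $\tau_{t}(\mg)\leq k$.

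\textbf{Part (i).} Here $k=t+1$, $\tau_{t}(\mf)\geq t+2$ and $\tau_{t}(\mg)\geq t+1$. The second assertion of Corollary \ref{2605191}, applied with $(x,y)=(t+2,t+1)$, gives
$$|\mf|+|\mg|\leq f_{2}(n,t+1,\ell,t,t+2,t+1)={\ell\brack t}{\ell-t+1\brack 1}{n-t-1\brack 0}+{t+1\brack t}{2\brack 1}^{2}{n-t-2\brack \ell-t-2}.$$
The first term does not depend on $n$. The second term is of order $q^{-(n-\ell)}$ relative to ${t+2\brack t+1}{n-t-1\brack \ell-t-1}$, via ${n-t-2\brack \ell-t-2}/{n-t-1\brack \ell-t-1}=(q^{\ell-t-1}-1)/(q^{n-t-1}-1)$. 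We compare this with the bound
$$m_{1}(n,t+1,\ell,t)>{t+2\brack t+1}{n-t-1\brack \ell-t-1}\left(1-\frac{1}{(q-1)q^{n-\ell-1}}\right)$$
from Lemma \ref{2604191}, using $n\geq \ell+t+2$. Both terms of the upper bound are then small fractions of ${t+2\brack 1}{n-t-1\brack \ell-t-1}$, and the inequality reduces to a routine Gaussian-coefficient estimate. I would defer that estimate to the technical section, in the style of Lemma \ref{2605291}.

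\textbf{Part (ii).} There are two subcases.
\begin{itemize}
\item[(a)] If $\tau_{t}(\mf)=\tau_{t}(\mg)=t+1$, use $f_{1}(n,k,\ell,t,t+1,t+1)={t+1\brack t}{\ell-t+1\brack 1}{n-t-1\brack k-t-1}+{t+1\brack t}{k-t+1\brack 1}{n-t-1\brack \ell-t-1}$. The second term must be compared with ${k+1\brack t+1}{n-t-1\brack \ell-t-1}$. Since $k\geq t+2$, ${k+1\brack t+1}$ exceeds ${t+1\brack 1}{k-t+1\brack 1}$ by a factor of order $q^{t(k-t-1)}\geq q^{t}$, so there is room. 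The first term is at most ${t+1\brack 1}{\ell-t+1\brack 1}{n-t-1\brack k-t-1}$. Because $\ell\geq k$, this is dominated by $m_{1}(n,\ell,k,t)$ or absorbed by the slack; in fact we only need the larger of the two $m_{1}$ values, and whichever of $k,\ell$ is larger gives the dominant term.
\item[(b)] If $\min\{\tau_{t}(\mf),\tau_{t}(\mg)\}\geq t+2$, I would first prove a monotonicity statement analogous to Lemma \ref{2604281}: $f_{1}(n,k,\ell,t,x,y)$ over $t+2\leq x\leq \ell$, $t+2\leq y\leq k$ is maximized at the extreme values. I expect this to be via Corollary \ref{2605191}'s $f_{2}$ with $x=y=t+2$, plus the fact that ${\ell-t+1\brack 1}{n-y-1\brack k-y-1}/{n-y\brack k-y}<1$ when $n\geq k+\ell-t+1$. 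That reduces to showing $f_{2}(n,k,\ell,t,t+2,t+2)<\max\{m_{1}(n,k,\ell,t),m_{1}(n,\ell,k,t)\}$. The terms there are ${\ell\brack t}{\ell-t+1\brack 1}^{2}{n-t-2\brack k-t-2}$ and the symmetric one, and each loses a factor $q^{n-k-\ell+t}$-ish against the main terms of $m_{1}$.
\end{itemize}

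The main obstacle is the small-field, small-$n$ boundary: $q\in\{2,3\}$ with $n=k+\ell-t+1+\cdots$ exactly as in Assumption \ref{2606078}. There the ratios are only constant factors, and the $\delta$ corrections in the bound on $n$ are exactly what make the numeric inequalities hold. For these cases I would isolate the inequality as a separate technical lemma and verify it by bounding ${a\brack b}$ between $q^{b(a-b)}$ and $\prod_{i\geq 1}(1-q^{-i})^{-1}q^{b(a-b)}$ (the latter below $3.47$ for $q=2$ and $1.79$ for $q=3$), then checking the residual cases directly.
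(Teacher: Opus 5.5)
Your proposal follows the paper's proof. The paper applies Corollary \ref{2605191} with $f_{2}(n,t+1,\ell,t,t+2,t+1)$ in (i), and with $f_{1}(n,k,\ell,t,t+1,t+1)$ and $f_{2}(n,k,\ell,t,t+2,t+2)$ in (ii). It then proves the resulting comparisons with $m_{1}$ in Lemmas \ref{2605294} and \ref{2605293}, term by term against the main terms of $m_{1}$ via Lemma \ref{2604191} (refined through $m_{2}$ when $q=2$), just as you sketch.

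Two small remarks. First, the monotonicity step you float in (ii)(b) is unnecessary, since the second part of Corollary \ref{2605191} already gives $f_{2}(n,k,\ell,t,t+2,t+2)$ directly. Second, which of ${k+1\brack t+1}{n-t-1\brack \ell-t-1}$ and ${\ell+1\brack t+1}{n-t-1\brack k-t-1}$ dominates depends on $n$, not on whether $\ell\geq k$; this is harmless because your term-by-term comparison never uses it.
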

\begin{proof} (i) Since $\mf\subseteq{V\brack t+1}$ and $\mg\subseteq{V\brack \ell}$  are   non-trivial cross-$t$-intersecting, we have $\tau_{t}(\mg)=t+1$. It follows from Corollary \ref{2605191} that 
	$\left|\mf\right|+\left|\mg\right|\leq f_{2}(n,t+1,\ell,t,t+2,t+1).$
	We thereby derive the required result from Lemma \ref{2605294}.

(ii)	Suppose $\tau_{t}(\mf)\geq t+2$ and $\tau_{t}(\mg)\geq t+2$.  From Corollary \ref{2605191}, we obtain 
	$\left|\mf\right|+\left|\mg\right|\leq f_{2}(n,k,\ell,t,t+2,t+2)$. It follows from Lemma \ref{2605293} that the required result holds.

	Suppose $\tau_{t}(\mf)=\tau_{t}(\mg)=t+1$. By Corollary  \ref{2605191}, we have 	
	$\left|\mf\right|+\left|\mg\right|\leq f_{1}(n,k,\ell,t,t+1,t+1)$. This combining with Lemma \ref{2605293}  implies the desired result.
\end{proof}

\begin{lem}\label{2605056}
		Let notations be as in Assumption \ref{2606078}.  Suppose that $\tau_{t}(\mf)= t+1$ and $\mf$ is not $(k-1)$-intersecting.  The following hold.
		\begin{itemize}
			\item[\normalfont(i)] If $\ell>k=t+1$, then $\left|\mf\right|+\left|\mg\right|< m_{1}(n,t+1,\ell,t)$.
			\item[\normalfont(ii)] If $\min\{k,\ell\}\geq  t+2$, then $\left|\mf\right|+\left|\mg\right|< \max\{m_{1}(n,k,\ell,t),m_{1}(n,\ell,k,t)\}$.
		\end{itemize}
\end{lem}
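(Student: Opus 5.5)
The plan mirrors the $t=1$ arguments in Lemmas \ref{2606064} and \ref{2606062}. We bound $|\mt_{t}(\mf)|$ by some explicit $z$ and then apply Corollary \ref{2605192}. Since $\tau_t(\mf)=t+1$, every $T\in\mt_t(\mf)$ is a $(t+1)$-space with $\dim(T\cap F)\ge t$ for all $F\in\mf$. Because $\mf$ is not $(k-1)$-intersecting, we can pick $A,B\in\mf$ with $s:=\dim(A\cap B)\le k-2$. Each $T\in\mt_t(\mf)$ then satisfies $\dim(T\cap A)\ge t$ and $\dim(T\cap B)\ge t$, so $T\subseteq A+B$ and
\[
\dim(T\cap A\cap B)\ge 2t-(t+1)=t-1 .
\]
We also have $s\ge t-1$, since $\mg\neq\emptyset$ forces $\dim(A\cap B)\ge t-1$ (a member of $\mg$ meets both in dimension $t$ inside an $\ell$-space, combined with non-triviality).

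The first main step is to bound $|\mt_t(\mf)|$. Split $\mt_t(\mf)=\ma\cup\mb$, where $\ma$ consists of the $T$ with $\dim(T\cap A\cap B)\ge t$ and $\mb$ of those with $\dim(T\cap A\cap B)=t-1$.

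For $\ma$, cover it by the $\ma_H$ with $H\in{A\cap B\brack t}$. Non-triviality gives some $F\in\mf$ with $\dim(H\cap F)<t$. Lemma \ref{2604155} (with $\ell$ replaced by $k$) then yields $|\ma_H|\le{k-t+1\brack 1}$, so
\[
|\ma|\le{s\brack t}{k-t+1\brack 1}.
\]

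For $\mb$, each $T$ has the form $(T\cap A)+(T\cap B)$ with $T\cap A\cap B$ of dimension $t-1$. This gives the crude bound
\[
|\mb|\le{s\brack t-1}{k-t+1\brack 1}^2 .
\]
When $s=t-1$ this is exactly the $\mw_1$-type count, and quotienting by $A\cap B$ reduces it to Lemma \ref{2604212}: the only $k$-dimensional $t$-covers are $A$ and $B$ once $|\mt|$ is large, which contradicts non-triviality of $\mg$. In general I would take the simpler crude bound and define
\[
z={s\brack t}{k-t+1\brack 1}+{s\brack t-1}{k-t+1\brack 1}^2 ,
\]
maximised over $t-1\le s\le k-2$. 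If the estimates below fail, the refinement via Lemmas \ref{2604212} and \ref{2604241} in the quotient $V/(A\cap B\cap T)$ is the fallback.

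The second step applies Corollary \ref{2605192}. For (i), with $k=t+1$ and $\ell>k$, non-triviality gives $\tau_t(\mg)=t+1$ (as in Lemma \ref{2605052} (i)), hence
\[
|\mf|+|\mg|\le f_3(n,t+1,\ell,t,t+1,t+1,z).
\]
For (ii), if $\tau_t(\mg)=t+1$ we are done by Lemma \ref{2605052} (ii). Otherwise $\tau_t(\mg)\ge t+2$ and
\[
|\mf|+|\mg|\le f_3(n,k,\ell,t,t+1,t+2,z).
\]

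The final step compares these bounds with $m_1$ via the lower bound of Lemma \ref{2604191}. Both leading terms of $f_3$ carry a factor ${n-t-1\brack \ell-t-1}$ at most times a constant involving $z$. The term $z{n-t-1\brack \ell-t-1}$ must be beaten by ${k+1\brack t+1}{n-t-1\brack \ell-t-1}$, while the other two terms are of lower order by a factor about $q^{-(n-k-\ell+t)}$. The crux is therefore showing $z<{k+1\brack t+1}$, with room to spare. This is the main obstacle: the crude bound on $|\mb|$ at $s=k-2$ is of order $q^{(t-1)(k-t-1)+2(k-t)}$, whereas ${k+1\brack t+1}$ is of order $q^{(t+1)(k-t)}$. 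That comparison works for $t\ge2$, but it is tight when $q=2$ or $q=3$, which explains the $\delta_{2,q}$ and $\delta_{3,q}$ corrections in the hypothesis on $n$. I would isolate this as a technical inequality lemma, in the style of Section \ref{26060110}.
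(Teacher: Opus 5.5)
Your part (i) matches the paper: for $k=t+1$ you get $s=t-1$, your $z$ reduces to $(q+1)^2$, and Corollary~\ref{2605192} together with Lemma~\ref{2605296} finishes it.

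\textbf{Part (ii) has a genuine gap.} The crude bound $|\mb|\le{s\brack t-1}{k-t+1\brack 1}^2$ is too weak, and your claim that $z<{k+1\brack t+1}$ ``works for $t\ge 2$'' is false. Your exponent count leaves only one spare power of $q$ when $t=2$. But ${k-t+1\brack 1}^2\approx(\tfrac{q}{q-1})^2q^{2(k-t)}$ loses a factor $4$ at $q=2$, which cancels that spare power. Concretely, take $q=2$, $t=2$, $k=\ell=5$, and a pair $A,B$ with $s=3=k-2$. You cannot avoid this $s$ if every non-$(k-1)$-intersecting pair in $\mf$ meets in dimension $k-2$. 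Then
\[
z={3\brack 2}{4\brack 1}+{3\brack 1}{4\brack 1}^2=105+1575=1680>1395={6\brack 3}.
\]
Hence $f_3(n,5,5,2,3,4,z)\ge 1680{n-3\brack 2}$, while $m_1(n,5,5,2)\le{6\brack 1}+{6\brack 3}{n-3\brack 2}$. The required inequality therefore fails for every $n$, and no lower bound on $n$ can rescue it. As $k\to\infty$, the $\mb$-part of your $z$ alone is about $\tfrac{21}{16}{k+1\brack 3}$. For $q=2$, $t=3$ it also fails at the threshold value of $n$, e.g.\ $k=\ell=10$, $n=21$.

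\textbf{The missing idea is elementary.} For $T\in\mb$ with $W=T\cap A\cap B$, the $t$-space $T\cap A$ contains $W$ and meets $A\cap B$ \emph{exactly} in $W$. So it is one of ${k-t+1\brack 1}-{s-t+1\brack 1}=q^{s-t+1}{k-s\brack 1}$ subspaces, not one of ${k-t+1\brack 1}$, and likewise for $T\cap B$. This gives the paper's bound $|\mb|\le q^{2(s-t+1)}{s\brack t-1}{k-s\brack 1}^2$; in the quotient by $W$ it is just $|\mw_2(A/W,B/W)|$. At $s=k-2$ this saves a factor of roughly $\bigl(1+\tfrac{1}{q^2-1}\bigr)^2$, turning $\tfrac{21}{16}$ into $\tfrac{189}{256}$ for $q=t=2$. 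With this $z$, Lemma~\ref{2605071} closes (ii).

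Your proposed fallback via Lemmas~\ref{2604212} and~\ref{2604241} is not the right tool. Those are threshold statements about which $k$-spaces are $1$-covers, not counts of $\mb$. Also, a quotient by $A\cap B\cap T$ changes with $T$.

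Two smaller slips, neither used later:
\begin{itemize}
\item $s\ge t-1$ does not follow from $\mg\neq\emptyset$: an $\ell$-space with $\ell\ge 2t$ can meet $A$ and $B$ in $t$-spaces even when $A\cap B=0$. It does follow from your own observation that $\dim(T\cap A\cap B)\ge t-1$ for $T\in\mt_t(\mf)$.
\item $T\subseteq A+B$ holds for $T\in\mb$ but need not hold for $T\in\ma$.
\end{itemize}
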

\begin{proof}
	Pick $A, B\in\mf$ with $\dim(A\cap B)\leq k-2$. By $\tau_{t}(\mf)=t+1$, we know $\dim(A\cap B)\geq t-1$.  
	
	(i) Since $k=t+1$, we know  $\dim(A\cap B)=t-1$ and $\tau_{t}(\mg)=t+1$. Therefore, we have $$\mt_{t}(\mf)\subseteq \left\{A_{1}+B_{1}: A_{1}\in {A\brack t},\ A\cap B\subseteq A_{1},\ B_{1}\in {B\brack t},\ A\cap B\subseteq B_{1} \right\}.$$
	This implies $\left|\mt_{t}(\mf)\right|\leq {2\brack 1}^{2}$. From Corollary  \ref{2605192}, we obtain 		
	$$\left|\mf\right|+\left|\mg\right|\leq f_{3}(n,t+1,\ell,t,t+1,t+1, (q+1)^{2}).$$
	It follows from Lemma \ref{2605296} that $\left|\mf\right|+\left|\mg\right|<m_{1}(n,t+1,\ell, t)$.

	(ii) By Lemma \ref{2605052} (ii), the desired result holds if $\tau_{t}(\mg)=t+1$. Now, we may assume $\tau_{t}(\mg)\geq t+2$.  In the remaining of this proof, write $s=\dim(A\cap B)$,
	$$\ma=\left\{T\in\mt_{t}(\mf): \dim(T\cap A\cap B)\geq t\right\}, \ \mb=\left\{T\in\mt_{t}(\mf): \dim(T\cap A\cap B)=t-1\right\}.$$
	 
	 We proceed by showing $\left|\ma\right|\leq {s\brack t}{k-t+1\brack 1}$. If $\ma$ is empty, then it is clear. Suppose $\ma$ is non-empty. Then $s\geq t$ and  $\ma=\bigcup_{H\in {A\cap B\brack t}}\ma_{H}$. Choose $H_{0}\in {A\cap B\brack t}$ such that $\left|\ma_{H}\right|\leq \left|\ma_{H_{0}}\right|$ for any $H\in {A\cap B\brack t}$. By $\tau_{t}(\mf)=t+1$, we know $\dim(H_{0}\cap F_{0})<t$ for some $F_{0}\in\mf$. It follows from Lemma \ref{2604155} that $\left|\ma_{H_{0}}\right|\leq {k-t+1\brack 1}^{\dim(U)-\dim(H_{0})}\left|\ma_{U}\right|$ for some subspace $U$ of $V$ with $\dim(U)>t$. Note that $\ma_{U}\neq \emptyset$ due to $\ma\neq\emptyset$. This implies $\dim(U)=t+1$ and  $\left|\ma_{H_{0}}\right|\leq {k-t+1\brack 1}$. Hence $\left|\ma\right|\leq {s\brack t}{k-t+1\brack 1}$.
	
	Next, we prove $\left|\mb\right|\leq q^{2(s-t+1)}{s\brack t-1}{k-s\brack 1}^{2}$. For each $T\in {V\brack t+1}$ with $\dim(T\cap A)\geq t$, $\dim(T\cap B)\geq t$ and $\dim(T\cap A\cap B)=t-1$, we have 
	$$t+1\geq \dim((T\cap A)+(T\cap B))=\dim(T\cap A)+\dim(T\cap B)-t+1\geq t+1,$$ 
	which implies $T=(T\cap A)+(T\cap B)$, $\dim(T\cap A)=t$ and $\dim(T\cap B)=t$.  Hence
	\begin{equation*}
		\begin{aligned}
			\mb&\subseteq \left\{A_{1}+B_{1}: A_{1}\in {A\brack t},\ B_{1}\in{B\brack t},\ A_{1}\cap B=B_{1}\cap A\in{A\cap B\brack t-1}\right\}\\
			&=\bigcup_{W\in {A\cap B\brack t-1}} \left\{A_{1}+B_{1}: A_{1}\in {A\brack t},\ B_{1}\in{B\brack t},\ A_{1}\cap B=B_{1}\cap A=W\right\}.
		\end{aligned}
	\end{equation*}
	This yields  $\left|\mb\right|\leq q^{2(s-t+1)}{s\brack t-1}{k-s\brack 1}^{2}$.

	For each $T\in\mt_{t}(\mf)$, by $\dim(T\cap A)\geq t$ and $\dim(T\cap B)\geq t$, we have $\dim (T\cap A\cap B)\geq t-1$. 	Then $\mt_{t}(\mf)=\ma\cup\mb$.  It follows from $\tau_{t}(\mg)\geq t+2$ and  Corollary \ref{2605192} that $\left|\mf\right|+\left|\mg\right|\leq f_{3}(n,k,\ell,t,t+1, t+2,z)$,  
	where $z={s\brack t}{k-t+1\brack 1}+q^{2(s-t+1)}{s\brack t-1}{k-s\brack 1}^{2}$.
	 This together with  Lemma  \ref{2605071} implies $	\left|\mf\right|+\left|\mg\right|< \max\{m_{1}(n,k,\ell,t), m_{1}(n,\ell,k,t)\}$, as desired. 
\end{proof}

\begin{proof}[\textnormal{\textbf{Proof of Theorem \ref{1} (iv) and (v)}}]
	(iv) By (\ref{2606079}) and Lemma \ref{2605052} (i), we have $\tau_{t}(\mf)=t+1$. Lemma \ref{2605056} (i) implies  $\mf$ is $t$-intersecting. It follows from Proposition \ref{2605201} and (\ref{2606079}) that $\mf={C\brack t+1}$ and $\mg=\mm(C; \ell,t)$ for some $C\in{V\brack t+2}$, i.e., $(\mf,\mg)$ is isomorphic to $(\mf_{2},\mg_{2})$.

	(v)  
	From  (\ref{2606079}) and Lemma \ref{2605052} (ii), 	we have  $\tau_{t}(\mf)=t+1$ or $\tau_{t}(\mg)=t+1$.  Suppose  $\tau_{t}(\mf)=t+1$.  By (\ref{2606079}) and Lemma \ref{2605056} (ii), we know $\mf$ is $(k-1)$-intersecting. Proposition \ref{2605201} therefore implies that  $\mf\subseteq {C\brack k}$  and $\mg\subseteq\mm(C;\ell,t)$ for some $C\in{V\brack k+1}$. Combining this with (\ref{2606079}), we obtain
	$\mf={C\brack k}$ and $\mg=\mm(C;\ell,t)$, i.e., $(\mf,\mg)$ is isomorphic to $(\mf_{2},\mg_{2})$. 
	Suppose $\tau_{t}(\mg)=t+1$. Interchanging the roles of $(\mf,k)$ and $(\mg,\ell)$,  the same argument yields   $\mg={D\brack \ell}$  and  $\mf=\mm(D;k,t)$ for some $D\in{V\brack \ell+1}$, i.e., $(\mf,\mg)$ is isomorphic to $(\mf_{3},\mg_{3})$. 
\end{proof}

\section{Concluding remarks}\label{2606019}

Let $\min\{k,\ell\}\geq t+1$ and $n\geq k+\ell-t+1$.  Suppose that  $\mf\subseteq {V\brack k}$ and $\mg\subseteq {V\brack \ell}$ are cross-$t$-intersecting families. Theorem \ref{1} has  characterized the extremal families  when $\left|\mf\right|+\left|\mg\right|$ is maximum, subject to  $\dim(\cap_{F\in\mf}F)<t$ and $\dim(\cap_{G\in\mg}G)<t$.  In this section,  we  investigate the  problem of maximizing $|\mf|+|\mg|$ under a weaker condition  $\dim(\cap_{F\in\mf}F)<t$.  
The next result shows that, if $\ell\geq k+2$, then the  weaker hypothesis yields no new extremal families, just as in Theorem \ref{1}.

\begin{pr1}\label{2606091}
Let $\ell\geq k+2\geq t+3$. Suppose that $\mf\subseteq{V\brack k}$ and $\mg\subseteq{V\brack \ell}$ are cross-$t$-intersecting families with $\dim(\cap _{F\in\mf}F)<t$, and $\left|\mf\right|+\left|\mg\right|$ is maximum. 
\begin{itemize}
	\item[\normalfont(i)] If $t=1$, $n\geq k+\ell+1+2\delta_{2,q}$ and $(n,k,\ell)\neq (7,2,4)$, then  $(\mf,\mg)$ is isomorphic to $(\mf_{1},\mg_{1})$.
	\item[\normalfont(ii)] If $t\geq 2$, $n\geq k+\ell+\lceil\frac{t+1}{\ell-k-1}\rceil$ and $(n,t,q)\notin\{(k+\ell+1,2,2), (k+\ell+2,2,2)\}$, then $(\mf,\mg)$ is isomorphic to $(\mf_{2},\mg_{2})$.
\end{itemize}
\end{pr1}
\begin{proof}
Suppose to the contrary $\dim(\cap_{G\in\mg}G)\geq t$. By Corollary \ref{2605191} and Lemma \ref{2604157}, we have
$$
\left|\mathcal{F}\right|+\left|\mathcal{G}\right|
\leq f_{1}(n,k,\ell,t,\tau_{t}(\mathcal{F}),t)
\leq {\ell\brack t}{n-t\brack k-t}+{k-t+1\brack 1}{n-t-1\brack \ell-t-1}.$$
It follows from Lemmas \ref{2605311} and \ref{2605312} (i) that
$
\left|\mathcal{F}\right|+\left|\mathcal{G}\right|<h_{1}(n,k,\ell,1)
$
if $t=1$, and
$\left|\mathcal{F}\right|+\left|\mathcal{G}\right|<m_{1}(n,k,\ell,t)$
if $t\geq 2$. This contradicts the maximality of $\left|\mathcal{F}\right|+\left|\mathcal{G}\right|$. Hence $\mathcal{F}$ and $\mathcal{G}$ are non-trivial cross-$t$-intersecting families with the maximum sum of sizes. The desired assertion now follows from Theorem \ref{1} and Lemma \ref{2605312} (ii).
\end{proof}

Suppose that $\ell\leq k+1$. Then  there exist families whose sum of sizes exceeds that of $(\mf_{1}, \mg_{1})$ and $(\mf_{2},\mg_{2})$.  If $\ell=k$, or $\ell<k$ and $n\geq k+\ell$, then the extremal families were provided by \cite[Theorem 1.4]{WZ-2013-129}. 
 Suppose $\ell=k+1$. Pick $G\in{V\brack k+1}$. Consider the families $\left\{F\in{V\brack k}: \dim(F\cap G)\geq t\right\}$ and $\{G\}$. They are  cross-$t$-intersecting, and no $t$-subspace is contained in all members of the first family. Write the sum of their sizes as $g(n,k,t)$. Then $g(n,k,t)\geq \left|\left\{F\in{V\brack k}: \dim(F\cap G)= t\right\}\right|$, i.e.,
$$g(n,k,t)\geq q^{(k-t)(k-t+1)}{k+1\brack t}{n-k-1\brack k-t}\sim q^{(k-t)(n-k+t)+t}\ \ \text{as}\ \ q\to \infty.$$
By the definition of $h_{1}(n,k,k+1,1)$ and $m_{1}(n,k,k+1,t)$, we have 
$$h_{1}(n,k,k+1,1)\leq 2+{k\brack 1}^{2}{n-2\brack k-1}\sim q^{(k-1)(n-k+1)} \ \  \text{as}\ \ q\to \infty, $$
$$m_{1}(n,k,k+1,t)\leq {k+1\brack 1}+{k+1\brack t+1}{n-t-1\brack k-t}\sim q^{(k-t)(n-k+t)} \ \  \text{as}\ \  q\to \infty.$$
Hence $g(n,k,1)>h_{1}(n,k,k+1,1)$ and   $g(n,k,t)>m_{1}(n,k,k+1,t)$  if $q$ is large enough. 

\section{Some inequalities}\label{26060110}

This section is devoted to proving some inequalities concerning  Gaussian binomial 
coefficients. The first one can be easily verified.

\begin{lem}\label{2604282}
	Let  $n>k$. Then
	$q^{n-k}<\frac{q^{n}-1}{q^{k}-1}< \frac{q^{n-k+1}}{q-1}$ and $q^{k(n-k)}<{n\brack k}<\frac{q^{k(n-k+1)}}{(q-1)^{k}}$.
\end{lem}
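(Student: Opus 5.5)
Both inequalities reduce to termwise estimates, so the plan is to handle the ratio $\frac{q^{n}-1}{q^{k}-1}$ first and then multiply.

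For the ratio with $n>k\geq 1$, I write $q^{n}-1=q^{n-k}(q^{k}-1)+(q^{n-k}-1)$. This gives
$$\frac{q^{n}-1}{q^{k}-1}=q^{n-k}+\frac{q^{n-k}-1}{q^{k}-1}>q^{n-k},$$
because $n-k\geq 1$ makes the second term positive.

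For the upper bound on the ratio, I cross-multiply. The claim becomes $(q-1)(q^{n}-1)<q^{n-k+1}(q^{k}-1)$. Expanding, the left side is $q^{n+1}-q^{n}-q+1$ and the right side is $q^{n+1}-q^{n-k+1}$. So the claim is equivalent to $q^{n-k+1}+1<q^{n}+q$. This follows from $q^{n}+q-q^{n-k+1}-1=(q^{n-k}-1)(q^{k}-1)+\bigl(q^{n-k}(q-1)+(q-1)\bigr)-\ldots$; more simply, $q^{n}-q^{n-k+1}=q^{n-k+1}(q^{k-1}-1)\geq 0$ and $q-1>0$, so the difference $q^{n}+q-q^{n-k+1}-1$ is strictly positive. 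This is the only step needing any care, since the inequality with $k=1$ is tight except for the $q-1>0$ term.

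For the Gaussian binomial, I write
$${n\brack k}=\prod_{i=0}^{k-1}\frac{q^{n-i}-1}{q^{k-i}-1}.$$
Each factor has the form $\frac{q^{N}-1}{q^{K}-1}$ with $N=n-i$, $K=k-i\geq 1$, and $N-K=n-k\geq 1$. Applying the ratio bounds to each factor gives
$$q^{n-k}<\frac{q^{n-i}-1}{q^{k-i}-1}<\frac{q^{n-k+1}}{q-1}.$$
Taking the product over the $k$ factors yields $q^{k(n-k)}<{n\brack k}<\frac{q^{k(n-k+1)}}{(q-1)^{k}}$. Strictness is preserved because $k\geq 1$ and every factor is positive.

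If $k=0$ is permitted, the Gaussian bound reads $1<1<1$ and is not strict. I would therefore assume $k\geq 1$, as is implicit in the statement, since the ratio $\frac{q^{n}-1}{q^{k}-1}$ is undefined when $k=0$.
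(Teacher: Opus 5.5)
Your argument is correct. It is the routine verification the paper leaves to the reader, since the paper gives no proof and only remarks that the lemma is easily verified. You use the decomposition $q^{n}-1=q^{n-k}(q^{k}-1)+(q^{n-k}-1)$ for the lower bound, cross-multiplication reducing the upper bound to $q^{n-k+1}(q^{k-1}-1)+(q-1)>0$, and a factor-by-factor product for ${n\brack k}$. Delete the abandoned half-identity ending in ``$-\ldots$'' in the upper-bound step. Your assumption $k\geq 1$ matches the paper's standing convention that lowercase letters denote positive integers.
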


\begin{lem}\textnormal{([\citen{WL-2026-106127}, Lemma 4.3 (i)])}\label{2604157}
	Let  $\min\{k,  \ell\}\geq t$ and $n\geq k+\ell-t+1$. The function ${\ell-t+1\brack 1}^{m-t}{n-m\brack k-m}$ is strictly decreasing as $m\in \{t,t+1,\ldots, k\}$ increases. 
\end{lem}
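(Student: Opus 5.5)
The statement to prove is Lemma~\ref{2604157}: for $\min\{k,\ell\}\geq t$ and $n\geq k+\ell-t+1$, the function
$$\phi(m)={\ell-t+1\brack 1}^{m-t}{n-m\brack k-m}$$
is strictly decreasing on $m\in\{t,\ldots,k\}$. The plan is to compare consecutive values. For $t\le m\le k-1$ we will show $\phi(m+1)/\phi(m)<1$. By the standard identity ${n-m\brack k-m}=\frac{q^{n-m}-1}{q^{k-m}-1}{n-m-1\brack k-m-1}$, this ratio equals
$$\frac{\phi(m+1)}{\phi(m)}={\ell-t+1\brack 1}\cdot\frac{q^{k-m}-1}{q^{n-m}-1}=\frac{(q^{\ell-t+1}-1)(q^{k-m}-1)}{(q-1)(q^{n-m}-1)}.$$

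So it suffices to prove
$$(q^{\ell-t+1}-1)(q^{k-m}-1)<(q-1)(q^{n-m}-1).$$
We will bound the left side above by $q^{\ell-t+1+k-m}$. Since $n\ge k+\ell-t+1$, we have $n-m\ge \ell-t+1+k-m$. Set $N=n-m\geq 2$, which holds because $k-m\ge1$ and $\ell-t+1\ge1$. It is then enough to show $q^{N}\le (q-1)(q^{N}-1)$, or a slightly sharper version. This fails for $q=2$, where the right side is $2^N-1<2^N$. So the crude bound is too lossy, and we must keep the $-1$ terms; this is the main (though mild) obstacle.

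For the sharper comparison, expand the left side:
$$(q^{a}-1)(q^{b}-1)=q^{a+b}-q^a-q^b+1,\qquad a=\ell-t+1\ge1,\ b=k-m\ge1.$$
Also $(q-1)(q^{N}-1)\ge (q-1)(q^{a+b}-1)$. We then need
$$q^{a+b}-q^a-q^b+1<(q-1)q^{a+b}-(q-1),$$
that is,
$$0<(q-2)q^{a+b}+q^a+q^b-q.$$
This holds for every prime power $q\ge2$: since $a,b\ge1$, we get $q^a+q^b-q\ge q>0$, and the first term is nonnegative. Hence the ratio is strictly less than $1$, so $\phi$ is strictly decreasing.

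The only care needed is the edge case $m=k-1$. There ${n-m-1\brack k-m-1}={n-k\brack 0}=1$, and the identity still holds, so no convention issues arise.
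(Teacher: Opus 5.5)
The paper does not prove this lemma; it quotes it from \cite[Lemma 4.3 (i)]{WL-2026-106127}, so there is no in-paper argument to compare with, and your proof has to stand on its own. It does: it is correct and complete.

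The consecutive ratio $\phi(m+1)/\phi(m)={\ell-t+1\brack 1}\frac{q^{k-m}-1}{q^{n-m}-1}$ is right, including at $m=k-1$. The bound $N=n-m\ge a+b$ follows from $n\ge k+\ell-t+1$. The inequality $(q-2)q^{a+b}+q^a+q^b-q\ge q>0$ gives strictness for every prime power $q$. Since $\phi(m)>0$ on $\{t,\ldots,k\}$, a ratio below $1$ gives strict monotonicity.

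One small remark: the difficulty you flag at $q=2$ comes only from bounding the numerator by $q^{a+b}$ while keeping the $-1$ in the denominator. Use the strict bounds of Lemma~\ref{2604282} on each factor separately:
\[
{a\brack 1}<\frac{q^{a}}{q-1}\qquad\text{and}\qquad \frac{q^{b}-1}{q^{N}-1}<q^{b-N}\quad(\text{valid since } N>b).
\]
Together these give $\phi(m+1)/\phi(m)<\frac{q^{a+b-N}}{q-1}\le\frac{1}{q-1}\le 1$ at once. This matches the style of estimate the paper uses throughout Section~\ref{26060110}.
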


Recall that $f_{1}(n,k,\ell,t,x,y)$, $f_{2}(n,k,\ell,t,x,y)$, $f_{3}(n,k,\ell,t,x,y,z)$ are defined in (\ref{2604193}), (\ref{2605193}), (\ref{2605057}) respectively, and $h_{2}(n,k,\ell,t)$, $m_{2}(n,k,\ell,t)$ are defined in (\ref{2604174}), (\ref{2604175}) respectively.
 
\begin{lem}\label{2604281}
	Let $k\geq 3$ and $n\geq 2k+\delta_{2,q}$. Then
	$f_{1}(n,k,k,1,x,y )\leq f_{1}(n,k,k,1,3,k)$ for any $3\leq x,y\leq k$.
\end{lem}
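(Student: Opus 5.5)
We must show that for $k\geq 3$, $n\geq 2k+\delta_{2,q}$ and $3\leq x,y\leq k$,
$$f_{1}(n,k,k,1,x,y)={x\brack 1}{k\brack 1}^{y-1}{n-y\brack k-y}+{y\brack 1}{k\brack 1}^{x-1}{n-x\brack k-x}\leq f_{1}(n,k,k,1,3,k).$$
The right-hand side equals ${3\brack 1}{k\brack 1}^{k-1}+{k\brack 1}^{3}{n-3\brack k-3}$. Write $g(m)={k\brack 1}^{m-1}{n-m\brack k-m}$. By Lemma \ref{2604157} with $t=1$ and $\ell=k$, $g$ is strictly decreasing on $\{1,\dots,k\}$. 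So $f_1(x,y)={x\brack 1}g(y)+{y\brack 1}g(x)$, and the target is ${3\brack 1}g(k)+{k\brack 1}g(3)$.

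The plan has two steps.
\begin{enumerate}
\item Reduce to the case $x=3$. By the symmetry $f_1(x,y)=f_1(y,x)$ we may assume $x\leq y$. For fixed $y$, compare $x$ with $x+1$ (while $x+1\leq y$). The change is
$${x+1\brack 1}g(y)-{x\brack 1}g(y)+{y\brack 1}\bigl(g(x+1)-g(x)\bigr)=q^{x}g(y)-{y\brack 1}\bigl(g(x)-g(x+1)\bigr).$$
It suffices that $g(x)\geq 2g(x+1)$ and $q^x g(y)\leq {y\brack 1}g(x+1)$. The ratio is
$$\frac{g(x)}{g(x+1)}=\frac{q^{n-x}-1}{(q^{k-x}-1){k\brack 1}},$$
which is roughly $q^{n-2k+1}(q-1)\geq q-1$ when $n\geq 2k$, and at least $2$ when $q=2$ and $n\geq 2k+1$. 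This is where the $\delta_{2,q}$ term enters. Since $g(y)\leq g(x+1)$ and ${y\brack 1}\geq {x+1\brack 1}>q^x$, we get $q^x g(y)<{y\brack 1}g(x+1)$, while the decrease $g(x)-g(x+1)\geq g(x+1)$ dominates. Hence $f_1$ is nonincreasing in $x$, and $f_1(x,y)\leq f_1(3,y)$.
\item Show that $\phi(y)=f_1(3,y)={3\brack 1}g(y)+{y\brack 1}g(3)$ is maximized at $y=k$. I expect this to be a convex-type function: the term ${y\brack 1}g(3)$ increases geometrically, while ${3\brack 1}g(y)$ decreases. So I would compare $\phi(y)$ only with the endpoints, showing $\phi(y)\leq\max\{\phi(3),\phi(k)\}$ and then $\phi(3)\leq\phi(k)$. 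Here $\phi(3)=2{3\brack 1}g(3)$ and $\phi(k)\geq {k\brack 1}g(3)$, and ${k\brack 1}\geq 2{3\brack 1}$ fails for $k=3$, where $\phi(3)=\phi(k)$ trivially. For $k\geq 4$, I would handle it through the exact difference
$$\phi(k)-\phi(3)=g(3)\Bigl({k\brack 1}-{3\brack 1}\Bigr)-{3\brack 1}\bigl(g(3)-g(k)\bigr),$$
which looks problematic. Instead I would use the increment directly:
$$\phi(y+1)-\phi(y)=q^y g(3)-{3\brack 1}\bigl(g(y)-g(y+1)\bigr)\geq q^{y}g(3)-{3\brack 1}g(y).$$
Because $g(y)\leq g(4)\leq g(3)/2$ for $y\geq 4$, this is positive for $y\geq 4$ roughly when $q^y\geq {3\brack 1}/2$, which is true.
\end{enumerate}

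The main obstacle is the step from $y=3$ to $y=4$, where $\phi(4)-\phi(3)=q^3g(3)-{3\brack 1}(g(3)-g(4))$. This holds since $q^3\geq {3\brack 1}=q^2+q+1$ for $q\geq 2$. So $\phi$ is increasing throughout, and $\phi(y)\leq\phi(k)$.

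The remaining care is in verifying the ratio bound $g(x)\geq 2g(x+1)$ precisely for $q=2$, using Lemma \ref{2604282}. This is the only place the hypothesis $n\geq 2k+\delta_{2,q}$ is needed.
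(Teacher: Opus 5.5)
Your proof is correct, but it handles interior points $(x,y)$ differently from the paper. Write $g(m)={k\brack 1}^{m-1}{n-m\brack k-m}$ as you do.

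The paper proves three inequalities:
\begin{enumerate}
\item $f_1(w,k)\ge f_1(w+1,k)$, i.e.\ decrease in the first coordinate only along the edge $y=k$;
\item $f_1(3,w+1)\ge f_1(3,w)$, which is your Step 2, with the same lower bound $g(3)\bigl(q^w-{3\brack 1}\bigr)$ for the increment;
\item an exchange inequality $f_1(x-1,y+1)\ge f_1(x,y)$ for $4\le x\le y\le k-1$.
\end{enumerate}
It then iterates the third to push $(x,y)$ onto one of the edges $x=3$ or $y=k$, after a case split on $x-3$ versus $k-y$.

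Your Step 1 replaces the first and third inequalities by one uniform statement: $f_1(\cdot,y)$ decreases in $x$ on $3\le x<y$ for every fixed $y$. You get this from $g(x)\ge 2g(x+1)$, $g(y)\le g(x+1)$ and ${y\brack 1}>q^x$. This removes the exchange inequality and the case analysis. The cost is a slightly stronger requirement, $\frac{q^{n-x}-1}{q^{k-x}-1}\ge 2{k\brack 1}$, in place of the paper's $\frac{q^{n-w}-1}{q^{k-w}-1}\ge {k\brack 1}+q^w$. Both hold under $n\ge 2k+\delta_{2,q}$.

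Two small corrections to your write-up.
\begin{itemize}
\item Your ratio estimate is off by a factor of $q$. Lemma~\ref{2604282} gives $g(x)/g(x+1)>q^{n-k}\frac{q-1}{q^k-1}>q^{n-2k}(q-1)$, so the ratio is about $q^{n-2k}(q-1)$, not $q^{n-2k+1}(q-1)$. This is at least $2$ exactly when $q\ge 3$, $n\ge 2k$, or $q=2$, $n\ge 2k+1$. For $q=2$, $n=2k$ it genuinely fails: $k=5$, $x=3$ gives $127/93$. That is precisely why the $\delta_{2,q}$ term is needed.
\item Step 2 needs no separate treatment of $y=3$ and no convexity discussion. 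Since $g(y)\le g(3)$, you already have $\phi(y+1)-\phi(y)\ge\bigl(q^y-{3\brack 1}\bigr)g(3)>0$ for every $y\ge 3$.
\end{itemize}
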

\begin{proof}
    If $k=3$, then the assertion is clear. Next, we assume $k\geq 4$.

 Pick $3\leq w<k $.     Lemma \ref{2604282} gives  
 $\frac{q^{n-w}-1}{q^{k-w}-1}-\frac{q^{k}-1}{q-1}-q^{w} \geq  \frac{q^{n-k}(q-1)-2q^{k}+q^{k-1}+1}{q-1}\geq 0$. 
 From Lemma \ref{2604157}, we obtain ${k\brack 1}^{k-1}\leq {k\brack 1}^{w}{n-w-1\brack k-w-1}$ and ${k\brack 1}^{w-1}{n-w\brack k-w}\leq {k\brack 1}^{2}{n-3\brack k-3}$. Hence
    \begin{equation*}
    	\begin{aligned}
    		\frac{f_{1}(n,k,k,1,w,k)-f_{1}(n,k,k,1,w+1,k)}{{k\brack 1}^{w} {n-w-1\brack k-w-1}}
    		\geq  \frac{q^{n-w}-1}{q^{k-w}-1}-\frac{q^{k}-1}{q-1}-q^{w} \geq 0,
    	\end{aligned}
    \end{equation*}
	\begin{equation*}
			\begin{aligned}
			 f_{1}(n,k,k,1,3,w+1)-f_{1}(n,k,k,1,3,w)
				\geq{k\brack 1}^{2}{n-3\brack k-3}(q^{w}-q^{2}-q-1)\geq 0.
			\end{aligned}
	\end{equation*}
Therefore, we have 
\begin{equation}\label{2604283}
	\max\{f_{1}(n,k,k,1,w,k),\ f_{1}(n,k,k,1,3,w)\}\leq f_{1}(n,k,k,1,3,k) \textnormal{ for any } 3\leq w\leq k.
\end{equation}  	
This implies the desired result if $k=4$ due to the symmetry. Assume $k\geq 5$ in the following.

	Choose  $4\leq x\leq  y\leq k-1$. By Lemma \ref{2604157}, we conclude 
	\begin{equation*}
		\begin{aligned}
			&\ f_{1}(n,k,k,1,x-1,y+1)-f_{1}(n,k,k,1,x,y)\\
			\geq&{k\brack 1}^{x-2}{n-x\brack k-x}\left( \frac{q^{n-x+1}-1}{q^{k-x+1}-1}{y+1\brack 1}-{k\brack 1}{y\brack 1}\right)-{x\brack 1}{k\brack 1}^{y-1}{n-y\brack k-y}\\
			\geq &\ {k\brack 1}^{x-2}{n-x\brack k-x}\left(  \frac{q^{n-x+1}-1}{q^{k-x+1}-1}{y+1\brack 1}-{k\brack 1}{x\brack 1}-{k\brack 1}{y\brack 1}\right). 
		\end{aligned}
	\end{equation*}
Note that ${y+1\brack 1}\geq {x\brack1}+{y\brack 1}$ due to $y\geq x$. It follows from Lemma \ref{2604282} that 
$\frac{q^{n-x+1}-1}{q^{k-x+1}-1}{y+1\brack 1}\geq{x\brack 1}{k\brack 1}+{k\brack 1}{y\brack 1}$. Therefore, we have
\begin{equation}\label{2604284}
	f_{1}(n,k,k,1,x-1,y+1)\geq f_{1}(n,k,k,1,x,y)\textnormal{ for any } 4\leq x\leq  y\leq k-1.
\end{equation}

Let  $3\leq x,y\leq k$. Since $f_{1}(n,k,k,1, x,y)=f_{1}(n,k,k,1, y,x)$, we may assume $x\leq y$.  Suppose $x=3$ or $y=k$. By  (\ref{2604283}), the required result follows. Suppose $4\leq x\leq y\leq k-1$. If $x-3\leq k-y$, then by (\ref{2604284}) repeatedly, we know $f_{1}(n,k,k,1, x,y)\leq f_{1}(n,k,k,1,3, y+x-3)$. It follows from  (\ref{2604283}) that the desired result holds. If $x-3> k-y$, then by similar discussion, we get 
$f_{1}(n,k,k,1, x,y)\leq f_{1}(n,k,k,1,x+y-k, k)\leq f_{1}(n,k,k,1, 3,k)$, as desired. 
\end{proof}

 Note that $h_{1}(n,k,\ell,t)$ and $m_{1}(n,k,\ell,t)$ are the sums of sizes of families in Constructions \ref{2604171} and \ref{2604172}, respectively.

\begin{lem}\label{2605291}
	Let $\ell\geq k\geq 2$, $(k,\ell)\neq (2,2)$ and $n\geq k+\ell+1+\delta_{2,q}-\delta_{k, \ell}$. 
	\begin{itemize}
		\item[\normalfont(i)] If $k=\ell$, then $h_{1}(n,k,k,1)>\max\{f_{1}(n,k,k,1,2,2), f_{1}(n,k,k,1,3,k)\}$.
		\item[\normalfont(ii)] If $k<\ell$, then $h_{1}(n,k,\ell,1)>f_{2}(n,k,\ell,1,3,2).$
	\end{itemize}
\end{lem}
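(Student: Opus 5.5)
The plan is to compare the lower bound for $h_{1}$ from Lemma \ref{2604173} (with $t=1$) against explicit upper estimates of $f_{1}$ and $f_{2}$, all normalized by the common quantity $N:={k\brack 1}^{2}{n-2\brack \ell-2}$. Lemma \ref{2604173} gives
$$\frac{h_{1}(n,k,\ell,1)}{N}>1-\frac{2}{(q^{2}-1)q^{n-k-\ell}},$$
so it suffices to show each competitor is at most $N$ times this quantity. Since $n-k-\ell\geq 1+\delta_{2,q}-\delta_{k,\ell}$, the right side is at least $1-\frac{2}{(q^2-1)q}$ when $k<\ell$. When $k=\ell$ and $q\geq 3$ it is at least $1-\frac{2}{q^2-1}$. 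The extra $\delta_{2,q}$ in the hypothesis on $n$ is what rescues the case $q=2$.

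\textbf{Part (i).} First, $f_{1}(n,k,k,1,2,2)=2{2\brack 1}{k\brack 1}{n-2\brack k-2}$. Its ratio to $N$ is $2(q+1)/{k\brack 1}$. This is at most $2(q+1)/(q^2+q+1)$ for $k\geq3$, which is far below the lower bound above. Next, $f_{1}(n,k,k,1,3,k)={3\brack 1}{k\brack 1}^{k-1}+{k\brack 1}{k\brack 1}^{2}{n-3\brack k-3}$. The second term divided by $N$ equals ${k\brack 1}\frac{q^{k-2}-1}{q^{n-2}-1}$. By Lemma \ref{2604282} this is roughly $q^{2k-n}/(q-1)$, hence at most about $\frac{1}{q(q-1)}$ when $n\geq 2k+1$. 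For the first term, I will use Lemma \ref{2604157} to bound ${k\brack 1}^{k-1}\leq {k\brack 1}^{2}{n-3\brack k-3}$, so it is at most ${3\brack 1}/{k\brack 1}$ times the second term, which is small. The resulting inequality then needs to be checked with explicit constants.

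\textbf{Part (ii).} Write
$$f_{2}(n,k,\ell,1,3,2)={\ell\brack 1}{\ell\brack 1}{n-2\brack k-2}+{k\brack 1}{k\brack 1}^{2}{n-3\brack \ell-3}.$$
The second term divided by $N$ is ${k\brack 1}\frac{q^{\ell-2}-1}{q^{n-2}-1}<\frac{q^{k+\ell-n+1}}{q-1}\leq\frac{1}{q-1}q^{-\delta_{2,q}}$. The first term divided by $N$ is
$$\frac{{\ell\brack 1}^{2}{n-2\brack k-2}}{{k\brack 1}^{2}{n-2\brack \ell-2}}.$$
Because $\ell>k$, the factor ${n-2\brack k-2}/{n-2\brack \ell-2}$ is at most roughly $q^{-(\ell-k)(n-k-\ell+\ldots)}$. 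This decays fast enough to absorb ${\ell\brack 1}^2/{k\brack 1}^2\approx q^{2(\ell-k)}$. The cleanest way to see this is to write the ratio as a product over $\ell-k$ steps, each step contributing at most $q^{2}\frac{q^{\ell'-2}-1}{q^{n-\ell'}-1}\lesssim q^{2\ell'-n}$.

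\textbf{Main obstacle.} The difficult part is the small cases where the asymptotic margins are tight. These are $q=2$ (where $\frac{1}{q-1}=1$), $k=2$ in part (ii) (where ${n-2\brack 0}=1$ changes the structure and the hypothesis lacks the $\delta_{2,q}\delta_{2,k}$ correction), and $k=3$ in part (i). For these I will replace the crude estimates of Lemma \ref{2604282} by the exact factors $\frac{q^{a}-1}{q^{b}-1}$. I will then verify the finitely many boundary parameter choices, with $n$ at its minimum, directly; larger $n$ only improves each ratio monotonically.
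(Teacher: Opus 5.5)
Your overall strategy is the paper's: normalize by $N={k\brack 1}^{2}{n-2\brack \ell-2}$, bound $h_{1}$ from below by Lemma \ref{2604173}, and bound the competitors via Lemmas \ref{2604282} and \ref{2604157}. However, the reduction you commit to fails in one case.

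\textbf{The failing case.} In part (i) you assert that $f_{1}(n,k,k,1,2,2)/N=2(q+1)/{k\brack 1}$ lies far below $1-\frac{2}{(q^{2}-1)q^{n-2k}}$. For $(k,q)=(3,2)$ this is false. The ratio is exactly $6/7$, whereas the lower bound is $2/3$ at $n=7$ and $5/6$ at $n=8$. Because $6/7$ is exact, your remedy of replacing the estimates of Lemma \ref{2604282} by exact factors on the competitor side cannot help. What is missing is a sharper lower bound for $h_{1}$.

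The paper handles this case separately using the exact value $h_{2}(n,3,3,1)=49(2^{n-2}-5)$, compared against $f_{1}(n,3,3,1,2,2)=42(2^{n-2}-1)$. The inequality $h_{2}>f_{1}$ is equivalent to $2^{n-2}>29$, i.e.\ $n\geq 7$. At $n=7$ it reads $1323>1302$, so the hypothesis on $n$ is exactly what this comparison needs, and the case must be done by hand. For every other $(k,q)$ with $k\geq 3$ one has $2(q+1)/{k\brack 1}\le 2/3$, and your argument goes through.

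\textbf{Slips in part (ii).} Two of your written estimates are too weak and would make the numbers fail.
\begin{itemize}
\item The second term satisfies ${k\brack 1}\frac{q^{\ell-2}-1}{q^{n-2}-1}<\frac{q^{k+\ell-n}}{q-1}\le\frac{1}{(q-1)q^{1+\delta_{2,q}}}$. Your bound carries an extra factor $q$, giving $1/2$ for $q\in\{2,3\}$. Added to the first-term bound, that already exceeds the available lower bounds $5/6$ and $11/12$.
\item For the first term, the step factor is ${n-2\brack \ell'-3}/{n-2\brack \ell'-2}=\frac{q^{\ell'-2}-1}{q^{n-\ell'+1}-1}$, not one with denominator $q^{n-\ell'}-1$. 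Individual steps need not be small when $\ell\gg k$; only the product matters.
\end{itemize}
To fix the first term, bound $\frac{q^{\ell}-1}{q^{k}-1}\le\frac{4}{3}q^{\ell-k}$ and each factor $\frac{q^{\ell-i-1}-1}{q^{n-k-i+1}-1}\le q^{k+\ell-n-2}$. This gives $\frac{16}{9}q^{-(\ell-k)(n-k-\ell)}$. Then $\frac{16}{9q^{1+\delta_{2,q}}}+\frac{1}{(q-1)q^{1+\delta_{2,q}}}\le\frac{5}{6}$ finishes (ii) uniformly. This includes $k=2$, so no special treatment of $k=2$ is needed.

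\textbf{Minor point in the second half of (i).} For $q\geq 3$ the hypothesis allows $n=2k$, so the relevant constant is about $\frac{1}{q-1}$, not $\frac{1}{q(q-1)}$. Your bound still closes; for example, at $k=q=3$, $n=6$ the total ratio is $0.65<3/4$.
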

\begin{proof}
	
	(i)  We first show $f_{1}(n,k,k,1,2,2)<h_{1}(n,k,k,1)$. Suppose $(k,q)=(3,2)$. Then $f_{1}(n,3,3,1,2,2)=42(2^{n-2}-1)$ and $h_{2}(n,3,3,1)=49(2^{n-2}-5)$. Note that $42(2^{n-2}-1)<49(2^{n-2}-5)$ due to $n\geq 7$.  Suppose $(k,q)\neq (3,2)$. Then
	$$\frac{f_{1}(n,k,k,1,2,2)}{{k\brack 1}^{2}{n-2\brack k-2}}=\frac{2(q+1)}{{k\brack 1}}\leq \frac{2}{{k-1\brack 1}}+\frac{2}{{k\brack 1}}\leq \frac{2}{3}.$$ 
	By Lemma \ref{2604173},   the required result holds.
	
	Next, we prove $f_{1}(n,k,k,1,3,k)< h_{1}(n,k,k,1)$. By Lemma \ref{2604157},  we get 
	$f_{1}(n,k,k,1,3,k)\leq {3\brack 1}{k\brack 1}^{2}{n-3\brack k-3}+{k\brack 1}^{3}{n-3\brack k-3}$.
	It follows that
	\begin{equation*}
		\begin{aligned}
			&\ h_{2}(n,k,k,1)-f_{1}(n,k,k,1,3,k)\\
			\geq&\ {k\brack 1}^{2}{n-2\brack k-2}-2q{k\brack 2}{k\brack 1}{n-3\brack k-3}-{3\brack 1}{k\brack 1}^{2}{n-3\brack k-3}-{k\brack 1}^{3}{n-3\brack k-3}\\
			=&\ {k\brack 1}^{2}{n-3\brack k-3}\left( \frac{q^{n-2}-1}{q^{k-2}-1}-2q\frac{q^{k-1}-1}{q^{2}-1}-{3\brack1}-{k\brack 1}\right).
		\end{aligned}
	\end{equation*}
	Suppose $k=3$. From $n\geq 6+\delta_{2,q}$, we obtain
	$\frac{q^{n-2}-1}{q-1}-2q-2{3\brack 1}=\frac{q^{n-2}-2q^{3}-2q^{2}+2q+1}{q-1}\geq 0$.
	Suppose $k\geq 4$. By Lemma \ref{2604282}, we have 	$\frac{q^{n-2}-1}{q^{k-2}-1}\geq q^{n-k}$, which implies 
	\begin{equation*}
		\begin{aligned}
			\frac{q^{n-2}-1}{q^{k-2}-1}-2q\frac{q^{k-1}-1}{q^{2}-1}-{k\brack 1}-{3\brack1}\geq& \frac{q^{n-k}(q^{2}-1)-q^{k+1}-3q^{k}-q^{4}-q^{3}+4q+2}{q^{2}-1}.
		\end{aligned}
	\end{equation*}
	If $q=2$, then by $n\geq 2k+1$, we know 
	\begin{equation*}
		\begin{aligned}
			3\cdot2^{n-k}- 2^{k+1}-3\cdot 2^{k}-14  \geq 3\cdot 2^{k+1}- 2^{k+1}-3\cdot2^{k}-14=2^{k}-14\geq 0. 
		\end{aligned}
	\end{equation*}
	If $q\geq 3$, then by $n\geq 2k$ and $q^{k+2}\geq q^{k+1}+6q^{k}$, we get 
	\begin{equation*}
		\begin{aligned}
			q^{n-k}(q^{2}-1)-q^{k+1}-3q^{k}-q^{4}-q^{3}+4q+2
			\geq q^{k+2}-q^{k+1}-4q^{k}-q^{4}-q^{3}+4q+2 \geq 0. 
		\end{aligned}
	\end{equation*}
	In summary, inequality $f_{1}(n,k,k,1,3,k)\leq h_{2}(n,k,k,1)$ holds. This together with Lemma \ref{2604173} yields the desired result. 
	
	(ii) From Lemma \ref{2604282}, we obtain 
	\begin{equation*}
		\begin{aligned}
			\frac{f_{2}(n,k,\ell,1,3,2)}{{k\brack 1}^{2}{n-2\brack \ell-2}}&= \left(q^{\ell-k}+\frac{q^{\ell-k}-1}{q^{k}-1}\right)^{2}\cdot\prod_{i=1}^{\ell-k}\frac{q^{\ell-i-1}-1}{q^{n-k-i+1}-1}+\frac{{k\brack 1}(q^{\ell-2}-1)}{q^{n-2}-1}\\
			&\leq  \frac{16q^{2(\ell-k)}}{9}\cdot \frac{1}{q^{(\ell-k)(n-k-\ell+2)}} +\frac{1}{(q-1)q^{n-k-\ell}}\\
			&=\frac{16}{9q^{(\ell-k)(n-k-\ell)}}+\frac{1}{(q-1)q^{n-k-\ell}}.
		\end{aligned}
	\end{equation*}
	Note that $n\geq k+\ell+1+\delta_{2,q}$. Then $f_{2}(n,k,\ell,1,3,2)\leq \frac{5}{6}{k\brack 1}^{2}{n-2\brack \ell-2}$. This together with Lemma \ref{2604173} implies the desired result.
\end{proof}

\begin{lem}\label{2604301}
 Let $\ell\geq k\geq 2$, $(k,\ell)\neq (2,2)$ and $n\geq k+\ell+\delta_{2,q}$. Then	$h_{1}(n,k,\ell,1)>m_{1}(n,k,\ell,1)$.
\end{lem}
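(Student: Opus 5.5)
Compute $m_1(n,k,\ell,1)$ exactly, then compare it with the lower bound on $h_1$ from Lemma \ref{2604173}. For $t=1$, Construction \ref{2604172} gives $\mf_2={C\brack k}$ with $C\in{V\brack k+1}$, so $|\mf_2|={k+1\brack 1}$. The other family is $\mg_2=\mm(C;\ell,1)$, the $\ell$-subspaces meeting $C$ in dimension at least $2$. For an upper bound on $|\mg_2|$, count pairs $(L,G)$ with $L\in{C\brack 2}$ and $L\subseteq G$:
$$
|\mg_2|\leq {k+1\brack 2}{n-2\brack \ell-2}.
$$
Hence
$$
m_1(n,k,\ell,1)\leq {k+1\brack 1}+{k+1\brack 2}{n-2\brack \ell-2}.
$$
Lemma \ref{2604173} with $t=1$ gives
$$
h_1(n,k,\ell,1)> {k\brack 1}^2{n-2\brack \ell-2}\Bigl(1-\frac{2}{(q^2-1)q^{n-k-\ell}}\Bigr).
$$

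Next I would normalize by ${k\brack 1}^2{n-2\brack \ell-2}$. It suffices to prove
$$
\frac{{k+1\brack 2}}{{k\brack 1}^2}+\frac{{k+1\brack 1}}{{k\brack 1}^2{n-2\brack \ell-2}}\leq 1-\frac{2}{(q^2-1)q^{n-k-\ell}}.
$$
The first ratio is computed exactly:
$$
\frac{{k+1\brack 2}}{{k\brack 1}^2}=\frac{(q^{k+1}-1)(q-1)}{(q^{2}-1)(q^k-1)}.
$$
It decreases towards $q/(q+1)$ and is at most $\frac{q^2+q+1}{(q+1)^2}$, its value at $k=2$. The second term is tiny: ${n-2\brack \ell-2}\geq {n-2\brack 1}$ when $\ell\geq 3$, and it equals $1$ only when $\ell=2$. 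Since $(k,\ell)\neq(2,2)$, the case $\ell=2$ cannot occur.

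The remaining work is a case check.
\begin{itemize}
\item For $q\geq 3$: $n\geq k+\ell$ makes the right side at least $1-\frac{2}{q^2-1}\geq \frac34$. The left side is at most $\frac{13}{16}$ plus a small term when $k=2$, and at most about $\frac{q}{q+1}+\varepsilon$ for $k\geq 3$.
\item For $q=2$: we have $n\geq k+\ell+1$, so the right side is at least $1-\frac13=\frac23$. The left side tends to $\frac23$ from above, with $\frac{(2^{k+1}-1)}{3(2^k-1)}=\frac23+\frac{1}{3(2^k-1)}$. So for $q=2$ this direct comparison is too tight and fails.
\end{itemize}

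The main obstacle is therefore $q=2$ and, more generally, the looseness of Lemma \ref{2604173}'s bound. I would remedy it by using the exact value of $h_2$ from that lemma, whose normalized deficit is
$$
\frac{2q(q^{k-1}-1)(q^{\ell-2}-1)}{(q^2-1)(q^{n-2}-1)},
$$
rather than the cruder $\frac{2}{(q^2-1)q^{n-k-\ell}}$. For $q=2$ and $n\geq k+\ell+1$ this deficit is at most about $\frac{4}{3}2^{k+\ell-3-n+2}\leq \frac16$-ish, which is still not enough.

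So the first thing I would try is to sharpen the count for $\mg_2$ by inclusion–exclusion. Subtract the overcount from $G$ meeting $C$ in dimension $\geq 3$, exactly as in the proof of Lemma \ref{2604191}, but in the reverse direction: $|\mg_2|\leq |\mm(2)|-\bigl({3\brack 2}-1\bigr)|\mg_2(3)|$, and so on. In parallel, add back the terms $|\mg_1(t,t+1)|$ that were discarded in Lemma \ref{2604173}. If the asymptotic ratio $\frac{q}{q+1}$ versus $1$ still leaves a gap at small $k$ with $q=2$ (for example $k=2$, $\ell=3$, $n=7$), I would verify those finitely many boundary cases by direct numerical evaluation of $h_1$ and $m_1$.
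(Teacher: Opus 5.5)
There is a genuine gap: the comparison with $h_{1}$ is never completed. Your bound $m_{1}(n,k,\ell,1)\leq {k+1\brack 1}+{k+1\brack 2}{n-2\brack \ell-2}$ is correct, and it is the one the paper uses.

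The crude form $1-\frac{2}{(q^{2}-1)q^{n-k-\ell}}$ from Lemma \ref{2604173} fails for $q=3$ as well as for $q=2$. Your own figures go the wrong way: the left side can be $\frac{13}{16}$, while the right side is only guaranteed $\geq\frac34$. In fact, for $q=3$ and $n=k+\ell$ the right side equals $\frac34=\frac{q}{q+1}$, which is strictly below ${k+1\brack 2}/{k\brack 1}^{2}$ for every $k$. It even fails for $q=4$, $(k,\ell,n)=(2,3,5)$, which the hypotheses allow: there your left side is $\frac{21}{25}+\frac{1}{25}>1-\frac{2}{15}$.

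The fallback plan is not a proof either. You propose inclusion--exclusion on $\mg_{2}$, adding back $|\mg_{1}(t,t+1)|$, and then a numerical check of ``finitely many'' cases. None of this is carried out, it would need lower bounds on $|\mg_{2}(3)|$ that you do not have, and nothing makes the exceptional set finite, since $k,\ell,n$ are unbounded.

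The missing step is the one you considered and discarded: compare $m_{1}$ with the exact $h_{2}$. Your size estimate there is off. At $q=2$, $n=k+\ell+1$, your own expression for the deficit gives about $\frac13$, not $\frac16$. Rough sizes cannot settle it anyway, because the available slack is also about $\frac13$.

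The exact identity ${k\brack 1}^{2}-{k+1\brack 2}=q{k\brack 2}$, together with ${n-2\brack \ell-2}=\frac{q^{n-2}-1}{q^{\ell-2}-1}{n-3\brack \ell-3}$, gives
\[
h_{2}(n,k,\ell,1)-m_{1}(n,k,\ell,1)\geq q{k\brack 2}{n-3\brack \ell-3}\left(\frac{q^{n-2}-1}{q^{\ell-2}-1}-2{k\brack 1}\right)-{k+1\brack 1}.
\]
In your normalized language, the ratio of deficit to slack is exactly $2{k\brack 1}(q^{\ell-2}-1)/(q^{n-2}-1)$. This is below $1$ for every $q$, because $n\geq k+\ell+\delta_{2,q}$ gives $\frac{q^{n-2}-1}{q^{\ell-2}-1}>q^{n-\ell}\geq q^{k+\delta_{2,q}}\geq 2{k\brack 1}$.

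It remains to absorb the term ${k+1\brack 1}$:
\begin{enumerate}
\item[(a)] If $\ell=3$, the bracket equals $\frac{q^{n-2}-2q^{k}+1}{q-1}\geq\frac{q^{k}}{q-1}$.
\item[(b)] If $\ell\geq 4$, the bracket is at least $\frac{1}{q-1}$ and ${n-3\brack \ell-3}\geq q^{k}$.
\end{enumerate}
In either case the right side is at least $\frac{q^{k+1}}{q-1}{k\brack 2}-{k+1\brack 1}\geq 0$. Lemma \ref{2604173} ($h_{1}>h_{2}$) then finishes the proof, with no case split on $q$ and no numerical checks. This is exactly the paper's proof.
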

\begin{proof}
By Lemma \ref{2604173}, it is sufficient to show $h_{2}(n,k,\ell,1)\geq m_{1}(n,k,\ell,1)$. From the definition of $m_{1}(n,k,\ell,1)$, we obtain 
$m_{1}(n,k,\ell, 1)\leq {k+1\brack 1}+{k+1\brack 2}{n-2\brack \ell-2}$.  Note that $\ell\geq 3$. We further conclude 
\begin{equation*}
	\begin{aligned}
		h_{2}(n,k,\ell,1)-m_{1}(n,k,\ell,1)\geq q{k\brack2}{n-3\brack \ell-3}\left(\frac{q^{n-2}-1}{q^{\ell-2}-1}-2{k\brack 1}\right)-{k+1\brack 1}.
	\end{aligned}
\end{equation*}
If $\ell=3$, then $$\frac{q^{n-2}-1}{q-1}-2{k\brack 1}=\frac{q^{n-2}-2q^{k}+1}{q-1}\geq \frac{q^{k}(q^{1+\delta_{2,q}}-2)+1}{q-1}\geq \frac{q^{k}}{q-1}.$$
If $\ell\geq 4$, then by Lemma \ref{2604282}, we have ${n-3\brack \ell-3}\geq q^{(\ell-3)(n-\ell)}\geq q^{k}$ and 
$$\frac{q^{n-2}-1}{q^{\ell-2}-1}-2{k\brack 1}\geq q^{n-\ell}-2{k\brack 1}\geq \frac{q^{k+\delta_{2,q}}(q-1)-2q^{k}+2}{q-1}\geq \frac{1}{q-1},$$
which implies 
${n-3\brack \ell-3}\left(\frac{q^{n-2}-1}{q^{\ell-2}-1}-2{k\brack 1}\right)\geq \frac{q^{k}}{q-1}$. Hence 
$$	h_{2}(n,k,\ell,1)-m_{1}(n,k,\ell,1)\geq \frac{q^{k+1}}{q-1}{k\brack 2}-{k+1\brack 1}\geq 0.$$
 This implies the desired result.
\end{proof}

\begin{lem}\label{260501}
	Let $\ell\geq k\geq 3$ and $n\geq k+\ell+3\delta_{2,q}+\delta_{3,q}$. Then $$h_{1}(n,k,\ell,1)>f_{3}(n,k,\ell,1, 2, 2+\delta_{k,\ell},  z)$$ 
	for any $z\in\left\{{k-1\brack 1}\left({k\brack 1}+q\right)\right\}\cup \left\{{s\brack 1}{k\brack 1}+q^{2s-1}{k-s\brack 1}\left({k-s\brack 1}+q\right): 1\leq s\leq k-2 \right\}$.
\end{lem}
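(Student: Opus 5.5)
Write $y=2+\delta_{k,\ell}$ and use $h_{1}(n,k,\ell,1)>h_{2}(n,k,\ell,1)$ from Lemma \ref{2604173}. It suffices to show
$$f_{3}(n,k,\ell,1,2,y,z)\leq h_{2}(n,k,\ell,1)={k\brack 1}^{2}{n-2\brack \ell-2}-2q{k\brack 2}{k\brack 1}{n-3\brack \ell-3}.$$
The plan is to normalize everything by $N:={k\brack 1}^{2}{n-2\brack \ell-2}$, the leading term of $h_{2}$. With $t=1$ and $x=2$ the three terms of $f_{3}$ are:
\begin{itemize}
\item the first term ${2\brack 1}{\ell\brack 1}^{y-1}{n-y\brack k-y}$;
\item the second term ${k\brack 1}{k\brack 1}^{2}{n-3\brack \ell-3}$;
\item the last term $z{n-2\brack \ell-2}$.
\end{itemize}

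\textbf{The $z$-term.} Its ratio to $N$ is $z/{k\brack 1}^{2}$. First bound $z$ uniformly.
\begin{itemize}
\item For $z={k-1\brack 1}\left({k\brack 1}+q\right)$ we get $z/{k\brack 1}^{2}\leq {k-1\brack 1}/{k\brack 1}+q{k-1\brack 1}/{k\brack 1}^{2}<1/q+1/q^{2}$ roughly.
\item For the family indexed by $1\leq s\leq k-2$, the expression ${s\brack 1}{k\brack 1}+q^{2s-1}{k-s\brack 1}\left({k-s\brack 1}+q\right)$ should be maximized at an endpoint $s=1$ or $s=k-2$. I would check this by comparing consecutive $s$, or via convexity of the dominant term $q^{2s-1}q^{2(k-s-1)}\approx q^{2k-3}$.
\end{itemize}
In either case $z\leq c_q{k\brack 1}^{2}/q$ with an explicit constant $c_q$ (about $1+2/q$). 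The $s=k-2$ endpoint gives roughly $q^{2k-5}(q+1)(2q+1)\approx 2q^{2k-3}$, so it needs care: that is ratio about $2/q$.

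\textbf{The other two terms.} The second term has ratio ${k\brack 1}{n-3\brack \ell-3}/{n-2\brack \ell-2}={k\brack 1}(q^{\ell-2}-1)/(q^{n-2}-1)\leq 1/((q-1)q^{n-k-\ell})$ by Lemma \ref{2604282}. The subtracted term of $h_{2}$ has ratio $2q(q^{k-1}-1)(q^{\ell-2}-1)/((q^{2}-1)(q^{n-2}-1))\leq 2/((q^{2}-1)q^{n-k-\ell-1})$, as already computed in the proof of Lemma \ref{2604173}. The first term splits by case.
\begin{itemize}
\item If $\ell=k$ ($y=3$), it is ${2\brack 1}{k\brack 1}^{2}{n-3\brack k-3}$, with ratio $(q+1)(q^{k-2}-1)/(q^{n-2}-1)$, which is tiny.
\item If $\ell>k$ ($y=2$), it is $(q+1){\ell\brack 1}{n-2\brack k-2}$. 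Dividing by $N$ gives a product like the one in Lemma \ref{2605291}(ii): ratio at most $(q+1)q^{\ell-k}\cdot O(q^{-(\ell-k)(n-k-\ell+1)})/{k\brack 1}$, again small.
\end{itemize}

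\textbf{Combining.} This yields
$$f_{3}/N\leq \varepsilon_{1}+\frac{1}{(q-1)q^{n-k-\ell}}+\frac{z}{{k\brack 1}^{2}}, \qquad h_{2}/N\geq 1-\frac{2}{(q^{2}-1)q^{n-k-\ell-1}}.$$
We need the sum of the small terms to be below 1. The main obstacle is the $z$-term for small $q$, since $z/{k\brack 1}^{2}$ is about $2/q$, which is not small when $q=2$ or $3$. This is exactly why the hypothesis has the extra $3\delta_{2,q}+\delta_{3,q}$. For $q\geq 4$ all terms are easily below $1/2+1/4+\ldots<1$. For $q=2,3$ I would use $n-k-\ell\geq 3$ (resp.\ $1$) to make the other terms at most about $1/12$. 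The $z$-term would then be bounded exactly rather than asymptotically, by computing the worst $s$ explicitly, including small $k$ such as $k=3,4$, where ${k\brack 1}$ is not yet close to $q^{k-1}\cdot q/(q-1)$. If the crude bound fails for some tiny $(k,q)$, I would verify those finitely many cases by direct evaluation as polynomial inequalities in $q^{n}$, in the way $(k,q)=(3,2)$ was handled in Lemma \ref{2605291}.
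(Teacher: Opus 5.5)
Your route is the paper's: pass to $h_2$ via Lemma~\ref{2604173}, divide by $N={k\brack 1}^2{n-2\brack \ell-2}$, locate the worst $z$ at an endpoint in $s$, bound the three terms of $f_3$, and split on $q$. However, the combining step fails as you wrote it.

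\textbf{The combining step fails.} Your bound $2/((q^2-1)q^{n-k-\ell-1})$ for the subtracted part of $h_2$ loses a factor $q$. Lemma~\ref{2604173} at $t=1$ actually gives $2/((q^2-1)q^{n-k-\ell})$; use $(q^{k-1}-1)(q^{\ell-2}-1)\leq q^{k+\ell-3}-1$ and $k+\ell-3\leq n-2$. With your exponent the target inequality is false, not merely hard to verify:
\begin{itemize}
\item For $q=2$, $n=k+\ell+3$, take the admissible $z$ with $s=k-2$. The exact ratios $z/{k\brack 1}^2$ and ${k\brack 1}{n-3\brack \ell-3}/{n-2\brack \ell-2}$ tend to $1/4+15/32=23/32$ and $1/8$. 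Hence $f_3/N\geq 27/32-o(1)$, which exceeds your claimed lower bound $5/6$ for $h_2/N$.
\item For $q=4$, $n=k+\ell$, your bound reads $h_2/N\geq 7/15$, while $f_3/N\geq 469/1024+1/3-o(1)\approx 0.79$. So ``$q\geq 4$ is easy'' is wrong as stated, and $q=5$ fails the same way.
\end{itemize}
These failures occur for all large $k,\ell$. Checking finitely many small $(k,q)$ by hand cannot repair them, and neither can any sharpening of the $f_3$ side, since the ratios above are exact.

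\textbf{What remains after the fix is the missing content.} With the correct exponent the targets become $11/12$ for $q=2$ and $13/15$ for $q\geq 3$. What is left is exactly the sharp numerical verification that the paper carries out and your plan leaves open. Your heuristic $z/{k\brack 1}^2\approx 2/q$ equals $1$ at $q=2$ and is useless there, so you need exact bounds. The paper proceeds as follows:
\begin{itemize}
\item $q^{2s-1}{k-s\brack 1}\bigl({k-s\brack 1}+q\bigr)$ decreases in $s$ for $q=2$ (it equals $2^{2k-1}-2^{2s-1}$) and increases for $q\geq 3$.
\item The first candidate ${k-1\brack 1}\bigl({k\brack 1}+q\bigr)$ is dominated.
\item Hence $z\leq {k-2\brack 1}{k\brack 1}+2^{2k-1}-2$ for $q=2$, and $z\leq {k-2\brack 1}{k\brack 1}+q^{2k-5}(q+1)(2q+1)$ for $q\geq 3$.
\item This yields $z/{k\brack 1}^2\leq 73/96$, $925/1521$, $469/1024$ for $q=2$, $q=3$, $q\geq 4$ respectively.
\item The second term is bounded by $1/8$, $1/6$, $1/3$ respectively.
\item The first term is bounded by $3/127$ or $3/98$ ($q=2$), $4/81$ or $6/169$ ($q=3$), and $1/17$ or $4/63$ ($q\geq 4$), according as $k=\ell$ or $k<\ell$.
\end{itemize}
The totals then fall below $11/12$ and $13/15$. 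For $q=2$, $k<\ell$, the margin is less than $10^{-3}$. That tight computation is the real content of the lemma, and it is absent from your proposal.
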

\begin{proof} 	 
	
  We first claim that, as   $s$ increases from $1$ to $k-2$, the function $q^{2s-1}{k-s\brack 1}\left({k-s\brack 1}+q\right)$ decreases  if $q=2$, and increases if $q\geq 3$.  Indeed, if $k=3$, then the required result is clear. Now we assume $k\geq 4$.    	Pick $1\leq s\leq k-3$.   Then 
	\begin{equation*}
		\begin{aligned}
			&\ q^{2s+1}{k-s-1\brack 1}\left({k-s-1\brack 1}+q\right)-q^{2s-1}{k-s\brack 1}\left({k-s\brack 1}+q\right)\\
			=&\ \frac{q^{2s-1}(q+1)}{q-1}\left( q^{k-s}(q-2)-q^{2}+q+1\right). 
		\end{aligned}
	\end{equation*}
	Note that $q^{k-s}(q-2)-q^{2}+q+1$ is negative if $q=2$, and is  positive if $q\geq 3$.  Consequently, $q^{2s-1}{k-s\brack 1}\left({k-s\brack 1}+q\right)$ attains its maximum value at $s=1$ if $q=2$, and at $s=k-2$ if $q \geq 3$. This implies  the claim.
	
   Moreover, we conclude that  ${k-2\brack 1}{k\brack1}+q^{2k-5}(q+1)(2q+1)$  minus ${k-1\brack 1}\left({k\brack 1}+q\right)$ is equal to $\frac{q^{2k-5}(q^{3}+q^{2}-2q-1)-q^{k}+q^{k-2}+1}{q-1}+1$, and this difference is greater than 0.

   	By the above discussion, for each $z$ in the assumption of this lemma, we have
	     $$z-{k-2\brack 1}{k\brack 1}\leq  \left\{
	     \begin{aligned}
	     	&2^{2k-1}-2 &\textnormal{if}\ \  q=2,\\
	     	&q^{2k-5}(q+1)(2q+1) &\textnormal{if}\ \  q\geq3.
	     \end{aligned}
	     \right. $$
	Note that $f_{3}(n,k,\ell, 1,2, 2+\delta_{k,\ell},  z+1)\geq f_{3}(n,k,\ell, 1,2, 2+\delta_{k,\ell},  z)$. We just need to show $h_{1}(n,k,\ell,1)> f_{3}(n,k,\ell, 1,2, 2+\delta_{k,\ell},  z)$ after setting $z$ to be the above upper bounds.

	\medskip
	\noindent \textbf{Case 1.} $q=2$.  
	\medskip

	By Lemma \ref{2604173}, we know $h_{1}(n,k,\ell,1)>\frac{11}{12}{k\brack 1}^{2}{n-2\brack \ell-2}$. It is sufficient to prove
	\begin{equation}\label{2605011}
		\begin{aligned}
			\frac{f_{3}(n,k,\ell,1, 2, 2+\delta_{k,\ell},  z)}{{k\brack 1}^{2}{n-2\brack \ell-2}}
			= \frac{z}{(2^{k}-1)^{2}}+\frac{{k\brack 1}{n-3\brack\ell-3}}{{n-2\brack \ell-2}}+\frac{3{\ell\brack1}^{1+\delta_{k,\ell}}{n-2-\delta_{k,\ell}\brack k-2-\delta_{k, \ell}}}{{k\brack 1}^{2}{n-2\brack\ell-2}}\leq \frac{11}{12},
		\end{aligned}
	\end{equation} 
where $z=(2^{k-2}-1)(2^{k}-1)+2^{2k-1}-2$. Note that
\begin{equation}\label{2605012}
	\frac{(2^{k-2}-1)(2^{k}-1)+2^{2k-1}-2}{(2^{k}-1)^{2}}=\frac{73}{96}-\frac{(2^{k}-13)^{2}}{96(2^{k}-1)^{2}}\leq \frac{73}{96}.
\end{equation} 
By Lemma \ref{2604282}, we have 
\begin{equation}\label{2605013}
	\frac{{k\brack 1}{n-3\brack \ell-3}}{{n-2\brack \ell-2}}=\frac{(2^{k}-1)(2^{\ell-2}-1)}{2^{n-2}-1}\leq \frac{2^{k}-1}{2^{n-\ell}}\leq \frac{1}{2^{n-k-\ell}}\leq \frac{1}{8}.
\end{equation}
If $k=\ell$, then by $\frac{2^{k-2}-1}{2^{2k+1}-1}\geq \frac{2^{k-1}-1}{2^{2k+3}-1}$ for any $k\geq 3$, we have
$$\frac{3{n-3\brack k-3}}{{n-2\brack k-2}}=\frac{3(2^{k-2}-1)}{2^{n-2}-1}\leq \frac{3(2^{k-2}-1)}{2^{2k+1}-1}\leq \frac{3}{127}. $$	
This together with (\ref{2605012}) and (\ref{2605013}) yields (\ref{2605011}). If $k<\ell$, then by Lemma \ref{2604282}, we get
\begin{equation*}
	\begin{aligned}
		 \frac{3{\ell\brack 1}{n-2\brack k-2}}{{k\brack 1}^{2}{n-2\brack \ell-2}}&= \frac{3(2^{\ell}-1)}{(2^{k}-1)^{2}}\cdot	\prod_{i=1}^{\ell-k}\frac{2^{\ell-i-1}-1}{2^{n-k-i+1}-1}\leq\frac{3(2^{\ell}-1)}{(2^{k}-1)^{2}}\cdot \frac{1}{2^{(\ell-k)(n-k-\ell+2)}}\\
		 &\leq \frac{3\cdot2^{\ell}}{(2^{k}-1)^{2}\cdot 2^{5(\ell-k)}}=\frac{3}{(2^{k}-2+2^{-k})\cdot 2^{4(\ell-k)}}\leq \frac{3}{98}.
	\end{aligned}
\end{equation*}
It follows from (\ref{2605012}) and (\ref{2605013}) that (\ref{2605011}) holds. The proof of this case is completed.

	\medskip
	\noindent \textbf{Case 2.} $q\geq 3$.  
	\medskip

	From Lemma \ref{2604173}, we obtain $h_{1}(n,k,\ell,1)>\frac{13}{15}{k\brack 1}^{2}{n-2\brack \ell-2}$. It is sufficient to prove
	\begin{equation}\label{2605021}
		\begin{aligned}
			\frac{f_{3}(n,k,\ell,1, 2, 2+\delta_{k,\ell},  z)}{{k\brack 1}^{2}{n-2\brack \ell-2}}
			= \frac{z}{{k\brack 1}^{2}}+\frac{{k\brack 1}{n-3\brack\ell-3}}{{n-2\brack \ell-2}}+\frac{(q+1){\ell\brack1}^{1+\delta_{k,\ell}}{n-2-\delta_{k,\ell}\brack k-2-\delta_{k, \ell}}}{{k\brack 1}^{2}{n-2\brack\ell-2}}\leq \frac{13}{15},
		\end{aligned}
	\end{equation} 
	where $z={k-2\brack 1}{k\brack 1}+q^{2k-5}(q+1)(2q+1)$. By Lemma \ref{2604282}, we have 
	\begin{equation}\label{2605022}
	\frac{{k\brack 1}{n-3\brack\ell-3}}{{n-2\brack \ell-2}}=\frac{(q^{k}-1)(q^{\ell-2}-1)}{(q-1)(q^{n-2}-1)}\leq \frac{1}{(q-1)q^{n-k-\ell}}\leq \frac{1}{(q-1)q^{\delta_{3,q}}}. 
	\end{equation}
	
	\medskip
	\noindent \textbf{Case 2.1.} $q=3$.  
	\medskip
	
Note that  ${k\brack 1}\geq q^{k-3}(q^{2}+q+1)$. Hence $28\cdot3^{2k-5}/{k\brack 1}^{2}\leq \frac{84}{169}$. It follows from Lemma \ref{2604282} that
	\begin{equation}\label{2605023}
		\frac{{k-2\brack 1}{k\brack 1}+28\cdot3^{2k-5}}{{k\brack 1}^{2}}\leq \frac{3^{k-2}-1}{3^{k}-1}+\frac{84}{169}\leq \frac{1}{9}+\frac{84}{169}=\frac{925}{1521}.
	\end{equation}
If $k=\ell$, then by Lemma \ref{2604282}, we know 
$\frac{4{n-3\brack k-3}}{{n-2\brack k-2}}\leq \frac{4}{3^{n-k}}\leq \frac{4}{81}$. This together with (\ref{2605022}) and (\ref{2605023}) yields (\ref{2605021}). If $k<\ell$, then  by Lemma \ref{2604282}, we get
\begin{equation*}
	\begin{aligned}
		\frac{4{\ell\brack1}{n-2\brack k-2}}{{k\brack 1}^{2}{n-2\brack\ell-2}}&=\frac{8(3^{\ell}-1)}{(3^{k}-1)^{2}}\cdot\prod_{i=1}^{\ell-k}\frac{3^{\ell-i-1}-1}{3^{n-k-i+1}-1}\leq \frac{8\cdot 3^{\ell}}{3^{2k}-2\cdot 3^{k}+1}\cdot \frac{1}{3^{(\ell-k)(n-k-\ell+2)}}\\
		&\leq \frac{8\cdot 3^{\ell}}{3^{2k}-2\cdot 3^{k}+1}\cdot \frac{1}{3^{3(\ell-k)}}=\frac{8}{3^{k}-2+3^{-k}}\cdot\frac{1}{3^{2(\ell-k)}}\leq \frac{6}{169}.
	\end{aligned}
\end{equation*}
	It follows from (\ref{2605022}) and (\ref{2605023}) that (\ref{2605021}) holds. The proof of this case is finished.

	\medskip
	\noindent \textbf{Case 2.2.} $q\geq 4$.  
	\medskip
	
	In this case, we have $\frac{2}{q}-\frac{3}{q^{3}}+\frac{1}{q^{4}}+\frac{1}{q^{5}}\leq \frac{469}{1024}$. It follows from Lemma \ref{2604282} that 
	\begin{equation}\label{2605024}
		\begin{aligned}
			&\ \frac{{k-2\brack 1}{k\brack 1}+q^{2k-5}(q+1)(2q+1)}{{k\brack 1}^{2}}\\
			=&\ \frac{2q+1}{q(q+1)}+\frac{q^{2}-3q-2}{q(q+1)}\cdot\frac{q^{k-2}-1}{q^{k}-1}+\frac{2q+1}{q(q+1)}\cdot\left(\frac{q^{k-2}-1}{q^{k}-1}\right)^{2}\\
			\leq &\ \frac{2q+1}{q(q+1)}+\frac{q^{2}-3q-2}{q^{3}(q+1)}+\frac{2q+1}{q^{5}(q+1)}=\frac{2}{q}-\frac{3}{q^{3}}+\frac{1}{q^{4}}+\frac{1}{q^{5}}\leq \frac{469}{1024}.
		\end{aligned}
	\end{equation}
	If $k=\ell$, then by $\frac{q^{k-2}-1}{q^{2k-2}-1}\geq \frac{q^{k-1}-1}{q^{2k}-1}$ for any $k\geq 3$, we have
	$$\frac{(q+1){n-3\brack k-3}}{{n-2\brack k-2}}=\frac{(q+1)(q^{k-2}-1)}{q^{n-2}-1}\leq  \frac{(q+1)(q^{k-2}-1)}{q^{2k-2}-1}\leq \frac{(q+1)(q-1)}{q^{4}-1}=\frac{1}{q^{2}+1}\leq \frac{1}{17}.$$
	It follows from (\ref{2605022}) and (\ref{2605024}) that (\ref{2605021}) holds. If $k<\ell$, then by Lemma \ref{2604282} and $\frac{q^{k}}{(q^{k}-1)^{2}}\geq \frac{q^{k+1}}{(q^{k+1}-1)^{2}}$ for any $k\geq 3$, we get
	\begin{equation*}
		\begin{aligned}
		\frac{(q+1){\ell\brack1}{n-2\brack k-2}}{{k\brack 1}^{2}{n-2\brack\ell-2}}&=\frac{(q^{2}-1)(q^{\ell}-1)}{(q^{k}-1)^{2}}\cdot\prod_{i=1}^{\ell-k}\frac{q^{\ell-i-1}-1}{q^{n-k-i+1}-1}\leq \frac{q^{\ell}(q^{2}-1)}{(q^{k}-1)^{2}}\cdot\frac{1}{q^{(\ell-k)(n-k-\ell+2)}}\\
		&\leq  \frac{q^{\ell}(q^{2}-1)}{(q^{k}-1)^{2}}\cdot\frac{1}{q^{2(\ell-k)}}=\frac{q^{k}}{(q^{k}-1)^{2}}\cdot\frac{(q^{2}-1)}{q^{\ell-k}}\leq \frac{q^{3}}{(q^{3}-1)^{2}}\cdot \frac{q^{2}-1}{q}\\
		&= \frac{q^{2}}{q^{3}-1}\cdot \frac{q^{2}-1}{q^{3}-1}\leq \frac{q}{q^{3}-1}\leq \frac{4}{63}. 
		\end{aligned}
	\end{equation*}
This together with (\ref{2605022}) and (\ref{2605024}) yields (\ref{2605021}). The proof of (ii) is completed.
\end{proof}

\begin{lem}\label{2605295}
	Let $\ell\geq 3$ and $n\geq \ell+3+\delta_{2,q}$. Then 
	$h_{1}(n,2,\ell,1)>f_{3}(n,2,\ell,1,2,2,2q+1)$.
\end{lem}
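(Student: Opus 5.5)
The plan is to compare $f_{3}(n,2,\ell,1,2,2,2q+1)$ with $h_{2}(n,2,\ell,1)$, which suffices because Lemma \ref{2604173} gives $h_{1}(n,2,\ell,1)>h_{2}(n,2,\ell,1)$. With $k=2$, $t=1$ and $x=y=2$, definition (\ref{2605057}) reads
$$f_{3}(n,2,\ell,1,2,2,2q+1)=(q+1){\ell\brack 1}+(q+1)^{3}{n-3\brack \ell-3}+(2q+1){n-2\brack \ell-2},$$
since ${2\brack 1}=q+1$ and ${n-2\brack 0}=1$. Definition (\ref{2604174}) reads
$$h_{2}(n,2,\ell,1)=(q+1)^{2}{n-2\brack \ell-2}-2q(q+1){n-3\brack \ell-3}.$$
Subtracting, and using $(q+1)^{2}-(2q+1)=q^{2}$ and $(q+1)^{3}+2q(q+1)=(q+1)(q^{2}+4q+1)$, it is enough to prove
$$q^{2}{n-2\brack \ell-2}\geq (q+1){\ell\brack 1}+(q+1)(q^{2}+4q+1){n-3\brack \ell-3}.$$

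Next, I would divide by ${n-2\brack \ell-2}$ and bound the two terms on the right separately using Lemma \ref{2604282}. For the first term,
$$\frac{{n-3\brack \ell-3}}{{n-2\brack \ell-2}}=\frac{q^{\ell-2}-1}{q^{n-2}-1}<q^{\ell-n}\leq q^{-3-\delta_{2,q}},$$
by the hypothesis $n\geq \ell+3+\delta_{2,q}$. For the second term, $\ell\geq 3$ gives ${n-2\brack \ell-2}\geq {n-2\brack 1}\geq q^{n-3}\geq q^{\ell+\delta_{2,q}}$, while ${\ell\brack 1}<q^{\ell}/(q-1)$. Hence
$$\frac{(q+1){\ell\brack 1}}{{n-2\brack \ell-2}}<\frac{q+1}{(q-1)q^{\delta_{2,q}}}.$$
It then remains to check
$$q^{2}>\frac{(q+1)(q^{2}+4q+1)}{q^{3+\delta_{2,q}}}+\frac{q+1}{(q-1)q^{\delta_{2,q}}}.$$

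The main obstacle is the case $q=2$, where this margin is tight: the right-hand side is $39/16+3/2=63/16<4$. This still works, but it leaves no slack for cruder estimates. If the bound ${n-2\brack \ell-2}\geq q^{n-3}$ turns out to be too lossy for some small $\ell$, I would instead treat $\ell=3$ directly with ${n-2\brack 1}=(q^{n-2}-1)/(q-1)$, which is sharper.

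For $q\geq 3$ there is ample room: the right-hand side is at most $88/27+2<9\leq q^{2}$, and it decreases as $q$ grows. Since all inequalities above are strict, we obtain $f_{3}<h_{2}<h_{1}$, as claimed.
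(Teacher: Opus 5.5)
Your proposal is correct and follows the paper's proof essentially step for step. You reduce via Lemma~\ref{2604173} to $q^{2}{n-2\brack \ell-2}\geq (q+1)(q^{2}+4q+1){n-3\brack \ell-3}+(q+1){\ell\brack 1}$, bound the two ratios with Lemma~\ref{2604282}, and reach the same tight $q=2$ margin ($63/16<4$, which is the paper's $39/64+3/8<1$). The only cosmetic difference is that you bound ${n-2\brack \ell-2}$ below by ${n-2\brack 1}\geq q^{n-3}$ rather than by $q^{(\ell-2)(n-\ell)}$; this gives exactly the paper's estimate in the worst case $\ell=3$, so your proposed fallback for small $\ell$ is unnecessary.
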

\begin{proof}
	By Lemma \ref{2604173},  we just need to show
	\begin{equation}\label{0605025}
		\begin{aligned}
			&\ h_{2}(n,2,\ell,1)-f_{3}(n,2,\ell,1, 2, 2,  2q+1)\\
			=&\ q^{2}{n-2\brack \ell-2}-(q+1)(q^{2}+4q+1){n-3\brack \ell-3}-(q+1){\ell\brack 1}\geq 0.
		\end{aligned}
	\end{equation}
 By Lemma \ref{2604282}, we have
	\begin{equation*}
		\begin{aligned}
			\frac{(q+1)(q^{2}+4q+1){n-3\brack \ell-3}}{q^{2}{n-2\brack \ell-2}}\leq \frac{(q+1)(q^{2}+4q+1)}{q^{5+\delta_{2,q}}}
			=\frac{1}{q^{2+\delta_{2,q}}}\left(1+\frac{5}{q}+\frac{5}{q^{2}}+\frac{1}{q^{3}}\right)\leq \frac{39}{64},
		\end{aligned}
	\end{equation*} 
	\begin{equation*}
		\begin{aligned}
			\frac{(q+1){\ell\brack 1}}{q^{2}{n-2\brack \ell-2}}\leq \frac{(q+1)q^{\ell}}{(q-1)q^{(\ell-2)(3+\delta_{2,q})+2}}=\left(1+\frac{2}{q-1}\right)\frac{1}{q^{(2+\delta_{2,q})(\ell-2)}}\leq \frac{3}{8}. 
		\end{aligned}
	\end{equation*}   
	Then (\ref{0605025}) holds. This finishes the proof.
\end{proof}

\begin{lem}\label{2605311}
	Let $\ell\geq k+2\geq 4$, $n\geq k+\ell+1+2\delta_{2,q}$ and $(n,k,\ell)\neq (7,2,4)$. Then $$h_{1}(n,k,\ell,1)> {\ell\brack 1}{n-1\brack k-1}+{k\brack 1}{n-2\brack \ell-2}.$$
\end{lem}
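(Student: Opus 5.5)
We reduce to the lower bound of Lemma \ref{2604173} with $t=1$: it suffices to show
$$h_{2}(n,k,\ell,1)\geq {\ell\brack 1}{n-1\brack k-1}+{k\brack 1}{n-2\brack \ell-2},$$
where $h_{2}(n,k,\ell,1)={k\brack 1}^{2}{n-2\brack \ell-2}-2q{k\brack 2}{k\brack 1}{n-3\brack \ell-3}$. We then divide everything by ${k\brack 1}^{2}{n-2\brack \ell-2}$ and bound each of the three resulting ratios separately.

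The second term of $h_2$ is handled as in Lemma \ref{2604173}. Its ratio is $\frac{2q(q^{k-1}-1)(q^{\ell-2}-1)}{(q^{2}-1)(q^{n-2}-1)}\leq \frac{2}{(q^{2}-1)q^{n-k-\ell}}$. The term ${k\brack 1}{n-2\brack \ell-2}$ contributes $1/{k\brack 1}\leq 1/(q+1)$, or more precisely $1/{k\brack 1}$, which for $k=2$ is $1/(q+1)$.

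The main term is ${\ell\brack 1}{n-1\brack k-1}$. We write
$${n-1\brack k-1}\Big/{n-2\brack \ell-2}=\frac{q^{n-1}-1}{q^{k-1}-1}\cdot\prod\frac{\cdots}{\cdots}.$$
Equivalently, using ${n-1\brack k-1}={n-1\brack n-k}$ and ${n-2\brack \ell-2}={n-2\brack n-\ell}$ with $\ell\geq k+2$, we express the ratio as a product of $\ell-k-1$ factors of the form $\frac{q^{a}-1}{q^{b}-1}$ with $a<b$. Via Lemma \ref{2604282} this gives roughly
$$\frac{{\ell\brack 1}{n-1\brack k-1}}{{k\brack 1}^{2}{n-2\brack \ell-2}}\lesssim \frac{c_q\, q^{\ell+(n-k)}}{q^{2k}\,q^{(\ell-k-1)(n-k-\ell+2)}\cdots}.$$
The exponent deficit is about $(\ell-k-1)(n-k-\ell+1)-(\ell-k)$, which is positive once $n\geq k+\ell+1$ and $\ell\geq k+2$. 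The total target is that the three ratios sum to less than $1$.

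The main obstacle is this middle estimate in the tight range, namely $n=k+\ell+1$ with small $\ell-k$ and $q=2$, where the exponent gain is only about $q^{1}$ and the constants $\frac{q}{q-1}$ from Lemma \ref{2604282} matter. For $q=2$ the extra $+2$ in $n$ gives slack. For the remaining boundary cases we would compute exactly, for example $k=2$, $\ell=4$, which is where the excluded triple $(7,2,4)$ fails. When $\ell-k=2$ the product has a single factor, and we would check the small cases ($k=2$, or $q=2$ with $n$ minimal) by direct evaluation of the Gaussian coefficients. For larger $\ell-k$ the bound improves geometrically, and a crude inequality suffices.
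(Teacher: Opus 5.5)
\paragraph{Verdict.} Your reduction is the same as the paper's, but the proposal has a genuine gap: the main term is never actually bounded, and the heuristic you give for it is wrong exactly where the lemma is tight.

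\paragraph{The framework is right.} By Lemma~\ref{2604173} it suffices to show $h_2(n,k,\ell,1)\ge {\ell\brack 1}{n-1\brack k-1}+{k\brack 1}{n-2\brack \ell-2}$. Your target ``the three ratios sum to at most $1$'' is equivalent to the paper's $X+Y\le 1$. Here $X,Y$ are the two subtracted terms divided by $q{k-1\brack 1}{k\brack 1}{n-2\brack \ell-2}={k\brack 1}^2{n-2\brack \ell-2}-{k\brack 1}{n-2\brack \ell-2}$.

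\paragraph{Where the heuristic fails.} The deficit $(\ell-k-1)(n-k-\ell+1)-(\ell-k)$ equals $\ell-k-2$ at $n=k+\ell+1$. So it is \emph{zero}, not positive, when $\ell=k+2$. Consider the family $k\ge 3$, $\ell=k+2$, $n=2k+3$, which is admissible for every $q\ge 3$. There $n-k-1=\ell$, and the exact identity $\frac{q^{n-1}-1}{(q^{k-1}-1)(q^{n-k}-1)}=1+\frac{1}{q^{k-1}-1}+\frac{1}{q^{n-k}-1}$ gives
\[
Y=\frac{{\ell\brack 1}{n-1\brack k-1}}{q{k-1\brack 1}{k\brack 1}{n-2\brack \ell-2}}=\frac{q-1}{q}\Bigl(1+\frac{1}{q^{k-1}-1}+\frac{1}{q^{k+3}-1}\Bigr).
\]
This tends to $1$ as $q\to\infty$, so the required inequality holds there only with a margin of order $1/q$.

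\paragraph{Consequences for your plan.}
\begin{itemize}
\item The hard case is not ``$q=2$ with $n$ minimal''. For $q=2$ the hypothesis forces $n\ge k+\ell+3$, and then $\frac{q^{\ell}-1}{q^{n-k-1}-1}\le\frac14$ leaves plenty of room.
\item The hard case is an infinite family in $(q,k)$, so finitely many direct evaluations cannot close it.
\item Any estimate that loses a constant like $\frac{q}{q-1}$ on the main term already fails. Applying Lemma~\ref{2604282} to the factor $\frac{q^{n-1}-1}{q^{k-1}-1}$ does exactly this: it yields $Y\le\frac{q^{k+3}}{q^{k+3}-1}>1$ in that family.
\end{itemize}

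\paragraph{What is missing.} You need the paper's uniform computation:
\begin{itemize}
\item Write the main ratio exactly as
\[
Y=\frac{q-1}{q}\cdot\frac{q^{\ell}-1}{q^{n-k-1}-1}\Bigl(1+\frac{1}{q^{k-1}-1}+\frac{1}{q^{n-k}-1}\Bigr)\prod_{i=1}^{\ell-k-2}\frac{q^{\ell-i-1}-1}{q^{n-k-i-1}-1}.
\]
\item Apply Lemma~\ref{2604282} only to the factors that are below $1$.
\item For $q\ge 3$, $k\ge 3$, check uniformly that
\[
X+Y\le \frac{2}{q(q^2-1)}+\frac{q-1}{q}\Bigl(1+\frac{1}{q^2-1}+\frac{1}{q^6-1}\Bigr)=1+\frac1q\Bigl(\frac{1}{q-1}+\frac{1}{{6\brack 1}}-1\Bigr)\le 1.
\]
\item For $k=2$, the factor $1+\frac{1}{q-1}=\frac{q}{q-1}$ cancels $\frac{q-1}{q}$ entirely, so you need an extra factor $1/q$. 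It comes from $\ell\ge 5$, or, when $\ell=4$, from $n\ge 8$. This is why $(7,2,4)$ must be excluded: for $q=3$ one finds $h_1(7,2,4,1)=18411<19400$, so the inequality genuinely fails there.
\end{itemize}
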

\begin{proof}
	
By Lemma \ref{2604173}, it is sufficient to show $h_{2}(n,k,\ell,1)- {\ell\brack 1}{n-1\brack k-1}-{k\brack 1}{n-2\brack \ell-2}\geq 0$, i.e., 
\begin{equation*}
	\begin{aligned}
		q{k-1\brack 1}{k\brack 1}{n-2\brack \ell-2}-2q{k\brack 2}{k\brack 1}{n-3\brack \ell-3}-{\ell\brack 1}{n-1\brack k-1}\geq 0.
	\end{aligned}
\end{equation*}
In the remaining of this proof, write
$$X=	\frac{2q{k\brack 2}{k\brack 1}{n-3\brack \ell-3}}{q{k-1\brack 1}{k\brack 1}{n-2\brack \ell-2}},\ \ Y=\frac{{\ell\brack 1}{n-1\brack k-1}}{q{k-1\brack 1}{k\brack 1}{n-2\brack \ell-2}}$$
 and $\Delta=2\delta_{2,q}+\delta_{2,k}\delta_{4,\ell}(1-\delta_{2,q})$. We just need to show $X+Y\leq 1$.  
 
 From Lemma \ref{2604282}, we obtain 
$X=\frac{2(q^{k}-1)(q^{\ell-2}-1)}{(q^{2}-1)(q^{n-2}-1)}\leq \frac{2}{q(q^{2}-1)}$ and
\begin{equation*}
	\begin{aligned}
		Y&=\frac{q-1}{q}\cdot\frac{q^{\ell}-1}{q^{n-k-1}-1}\cdot\left(1+\frac{1}{q^{k-1}-1}+\frac{1}{q^{n-k}-1}\right)\cdot \prod_{i=1}^{\ell-k-2}\frac{q^{\ell-i-1}-1}{q^{n-k-i-1}-1}\\
		&\leq \frac{q-1}{q}\cdot \frac{1}{q^{\Delta}}\cdot\left(1+\frac{1}{q^{k-1}-1}+\frac{1}{q^{\ell+1}-1} \right)\cdot \frac{1}{q^{\ell-k-2}}.
	\end{aligned}
\end{equation*}	
If $q=2$, then $X\leq \frac{1}{3}$ and $Y\leq \frac{63}{248}$.  If  $q\geq 3$ and $k=2$, then $X\leq \frac{1}{12}$ and  $Y\leq \frac{182}{363}$ due to $\ell+\Delta-4\geq 1$. If $q\geq 3$ and $k\geq 3$, then 
$$X+Y\leq\frac{2}{q(q^{2}-1)}+  \frac{q-1}{q}\cdot\left(1+\frac{1}{q^{2}-1}+\frac{1}{q^{6}-1} \right)=1+\frac{1}{q}\left(\frac{1}{q-1}+\frac{1}{{6\brack 1}}-1\right)\leq 1.$$ 
Therefore, the desired result follows.
\end{proof}

From now on, write $$M(n,k,\ell,t)=\max\left\{{k+1\brack t+1}{n-t-1\brack \ell-t-1},\ {\ell+1\brack t+1}{n-t-1\brack k-t-1}\right\}.$$

\begin{lem}\label{2605293}
	Let $\min\{k,\ell\}\geq t+2\geq 4$ and $n\geq k+\ell-t+1+3\delta_{2,q}+\delta_{3,q}$. Then 
	 $$\max\{m_{1}(n,k,\ell,t), m_{1}(n,\ell,k,t) \}>\max\{f_{1}(n,k,\ell,t,t+1,t+1), f_{2}(n,k,\ell,t,t+2,t+2) \}.$$
\end{lem}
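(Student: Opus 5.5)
By symmetry of both sides under swapping $(k,\ell)$ we may assume $\ell\geq k$. We reduce everything to the normalized quantity
\[
N:={k+1\brack t+1}{n-t-1\brack \ell-t-1}.
\]
By Lemma \ref{2604191}, $\max\{m_{1}(n,k,\ell,t),m_{1}(n,\ell,k,t)\}\geq m_{1}(n,k,\ell,t)> N\bigl(1-\frac{1}{(q-1)q^{n-k-\ell+t}}\bigr)$. Since $n-k-\ell+t\geq 1+3\delta_{2,q}+\delta_{3,q}$, this lower bound is at least $\frac{15}{16}N$ for $q=2$, at least $\frac{17}{18}N$ for $q=3$, and at least $\frac{11}{12}N$ for $q\geq4$. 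It therefore suffices to show that each of $f_{1}(n,k,\ell,t,t+1,t+1)/N$ and $f_{2}(n,k,\ell,t,t+2,t+2)/N$ is below a constant like $\frac{11}{12}$, with the constant adjusted per $q$.

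\textbf{Estimating $f_{1}$.} Write
\[
f_{1}(n,k,\ell,t,t+1,t+1)={t+1\brack 1}{\ell-t+1\brack 1}{n-t-1\brack k-t-1}+{t+1\brack 1}{k-t+1\brack 1}{n-t-1\brack \ell-t-1}.
\]
For the second term, the ratio to $N$ is ${t+1\brack 1}{k-t+1\brack 1}/{k+1\brack t+1}$. Since $k\geq t+2$, we have ${k+1\brack t+1}\geq{k+1\brack 2}$-type bounds: ${k+1\brack t+1}$ is at least ${t+1\brack 1}{k-t+1\brack 1}$ times roughly $q^{(t)(k-t-1)}/(\text{const})$, and $t\geq2$. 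Using Lemma \ref{2604282}, this should give a ratio of at most about $q^{-t(k-t-1)}(1+O(1/q))\leq$ something like $\frac{1}{q^{2}}\cdot\frac{(q^{2}+q+1)\cdots}{\cdots}$, which is small. For the first term, the ratio is
\[
\frac{{\ell-t+1\brack 1}}{{k-t+1\brack 1}}\cdot\frac{{n-t-1\brack k-t-1}}{{n-t-1\brack \ell-t-1}}\cdot(\text{second-term ratio}).
\]
When $\ell=k$ this equals the second-term ratio. When $\ell>k$, the Gaussian-coefficient quotient is $\prod$ of factors of size about $q^{-(n-k-\ell+2)}$ each, as in the proof of Lemma \ref{2605291}(ii), so it is even smaller.

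\textbf{Estimating $f_{2}$.} Write
\[
f_{2}(n,k,\ell,t,t+2,t+2)={\ell\brack t}{\ell-t+1\brack 1}^{2}{n-t-2\brack k-t-2}+{k\brack t}{k-t+1\brack 1}^{2}{n-t-2\brack \ell-t-2}.
\]
The second term divided by $N$ is
\[
\frac{{k\brack t}{k-t+1\brack 1}^{2}}{{k+1\brack t+1}}\cdot\frac{q^{\ell-t-1}-1}{q^{n-t-1}-1}.
\]
Here ${k\brack t}/{k+1\brack t+1}=(q^{t+1}-1)/(q^{k+1}-1)$, so the ratio is roughly $q^{t+1-k-1}\cdot q^{2(k-t)}\cdot q^{\ell-n}/(q-1)^{2}\approx q^{k+\ell-t-n}/(q-1)^{2}\cdot(\text{const})$. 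This is at most about $\frac{1}{(q-1)^{2}q^{1+3\delta_{2,q}+\delta_{3,q}}}$. The first term is handled the same way; when $\ell>k$, one additionally uses the product bound for ${n-t-2\brack k-t-2}/{n-t-2\brack \ell-t-2}$.

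\textbf{Main obstacle.} The hard part is small $q$, especially $q=2$ with $t=2$, $k=\ell=t+2$. Here the crude bounds from Lemma \ref{2604282} lose factors like $\prod(1-q^{-i})^{-1}\approx 3.46$, and the $f_{1}$ second-term ratio is not tiny. For $q=2$ and $q=3$ I would compute the relevant ratios exactly as rational functions, e.g. ${t+1\brack1}{k-t+1\brack1}/{k+1\brack t+1}$ at $k=t+2$ equals $\frac{(q^{t+1}-1)(q^{3}-1)(q^{2}-1)}{(q-1)^{2}(q^{t+3}-1)(q^{t+2}-1)}$. I would then check monotonicity in $k$ and $t$ so that only this worst case needs verification, possibly using the extra $\delta_{2,q}$ slack in $n$ there. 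For $q\geq4$ the generic estimates should suffice.
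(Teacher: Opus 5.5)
\textbf{There is a genuine gap.} Your reduction to the single normalizer $N={k+1\brack t+1}{n-t-1\brack \ell-t-1}$ breaks down for $f_{2}$.

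Assuming $\ell\geq k$ does not make $N$ dominate $B:={\ell+1\brack t+1}{n-t-1\brack k-t-1}$, the main term of $m_{1}(n,\ell,k,t)$. Near the lower bound on $n$ it is the other way round, and the first summand ${\ell\brack t}{\ell-t+1\brack 1}^{2}{n-t-2\brack k-t-2}$ of $f_{2}$ is not small relative to $N$. You propose to handle it with the product bound, which gives ${n-t-2\brack k-t-2}/{n-t-2\brack \ell-t-2}\approx q^{-(\ell-k)(n-k-\ell+t+2)}$. However, the prefactor ${\ell\brack t}{\ell-t+1\brack 1}^{2}/\bigl({k\brack t}{k-t+1\brack 1}^{2}\bigr)\approx q^{(t+2)(\ell-k)}$ beats it. What remains is a factor of order $q^{(\ell-k)(k+\ell-n)}$ in front of your second-summand ratio, and this is unbounded in $\ell-k$ whenever $n<k+\ell$, which the hypotheses allow since $t\geq 2$.

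Concretely, take $q=4$, $t=2$, $k=4$, $\ell=5$, $n=8$, which satisfies the hypotheses. The first summand of $f_{2}$ is ${5\brack 2}{4\brack 1}^{2}=5797\cdot 7225=41883325$. On the other hand, $m_{1}(8,4,5,2)\leq {5\brack 1}+{5\brack 3}{5\brack 2}=341+5797^{2}=33605550$. So $f_{2}>m_{1}(n,k,\ell,t)$ here, and no argument that only uses the lower bound $m_{1}(n,k,\ell,t)>\bigl(1-\frac{1}{(q-1)q^{n-k-\ell+t}}\bigr)N$ can work. The maximum on the left-hand side is genuinely needed; this is exactly the parameter set where the paper notes that $(\mf_{3},\mg_{3})$ beats $(\mf_{2},\mg_{2})$.

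\textbf{How to repair it.} Use the paper's per-summand normalization.
\begin{itemize}
\item Divide the first summand of $f_{1}$ and of $f_{2}$ by $B$, and the second summand by $A:=N$. The two resulting ratios are the same expression with $k\leftrightarrow\ell$.
\item Bound each ratio by a constant $c$, so that $f_{i}\leq c(A+B)\leq 2c\max\{A,B\}$.
\item Apply Lemma~\ref{2604191} to both $m_{1}(n,k,\ell,t)$ and $m_{1}(n,\ell,k,t)$. This gives $\max\{m_{1}(n,k,\ell,t),m_{1}(n,\ell,k,t)\}>\frac{11}{12}\max\{A,B\}$, with better constants for $q\leq 3$.
\end{itemize}

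In this framework your exact small-$q$ computation does handle $f_{1}$. At $q=2$ the ratio ${t+1\brack 1}{k-t+1\brack 1}/{k+1\brack t+1}$ is at most $\frac{49}{155}$, attained at $t=2$, $k=4$. Hence $f_{1}\leq\frac{98}{155}\max\{A,B\}<\frac{15}{16}\max\{A,B\}$. The paper instead compares directly with $m_{2}$ when $q=2$.

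\textbf{Two slips to fix.}
\begin{itemize}
\item At $k=t+2$ the ratio is $\frac{(q^{t+1}-1)(q^{3}-1)(q^{2}-1)}{(q-1)(q^{t+3}-1)(q^{t+2}-1)}$. The denominator has $(q-1)$, not $(q-1)^{2}$.
\item Your estimate of the second $f_{2}$ ratio drops a factor $q^{2}$. The correct bound is $\frac{q^{2}}{(q-1)^{2}}q^{-(n-k-\ell+t)}$, which equals $\frac{4}{9}$ at $q=4$, $n=k+\ell-t+1$. The final comparison $\frac{8}{9}<\frac{11}{12}$ is therefore tight, so the constant matters.
\end{itemize}
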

\begin{proof}
	We first show $\max\{m_{1}(n,k,\ell,t), m_{1}(n,\ell,k,t) \}>f_{1}(n,k,\ell,t,t+1,t+1)$.
		
	Suppose $q\geq 3$. From Lemma \ref{2604282}, we obtain
	\begin{equation*}
		\begin{aligned}
			\frac{{t+1\brack 1}{\ell-t+1\brack 1}}{{\ell+1\brack t+1}}\leq \frac{q^{\ell+2}}{(q-1)^{2}}\cdot \frac{1}{q^{(t+1)(\ell-t)}}
			=\frac{1}{(q-1)^{2}}\cdot\frac{1}{q^{t(\ell-t-1)-2}}\leq \frac{1}{4}.
		\end{aligned}
	\end{equation*}
	This implies ${t+1\brack 1}{\ell-t+1\brack 1}{n-t-1\brack k-t-1}\leq\frac{1}{4}{\ell+1\brack t+1} {n-t-1\brack k-t-1}$.  By symmetry, we get ${t+1\brack 1}{k-t+1\brack 1}{n-t-1\brack \ell-t-1}<\frac{1}{4}{k+1\brack t+1}{n-t-1\brack \ell-t-1}$. It follows that $f_{1}(n,k,\ell,t,t+1,t+1)\leq\frac{1}{2} M(n,k,\ell,t)$. 
	This together with Lemma \ref{2604191} yields the required result. 
	
	Suppose $q=2$.  Note that $k\geq t+2$. Then
	\begin{equation*}
		\begin{aligned}
			\frac{{k+1\brack t+1}-2{t+1\brack 1}{k-t+1\brack 1}}{2{t+1\brack 1}{k+1\brack t+2}}&=\frac{2^{t+2}-1}{2(2^{t+1}-1)(2^{k-t}-1)}-\frac{{k-t+1\brack 1}}{{k+1\brack k-t-1}}\geq \frac{1}{2^{k-t}-1}-\frac{{k-t+1\brack 1}}{{k-t+3\brack k-t-1}}\\
			&=\frac{1}{2^{k-t}-1}-\frac{315}{(2^{k-t+3}-1)(2^{k-t+2}-1)(2^{k-t}-1)}\\
			&\geq \frac{1}{2^{k-t}-1}-\frac{21}{31(2^{k-t}-1)}=\frac{10}{31(2^{k-t}-1)}.
		\end{aligned}
	\end{equation*}
	Applying Lemma \ref{2604282}, we know $\frac{2^{n-t-1}-1}{2^{\ell-t-1}-1}\geq 2^{k-t+4}\geq \frac{31(2^{k-t}-1)}{10}$.  It follows that 
	$$\frac{\left( {k+1\brack t+1}-2{t+1\brack 1}{k-t+1\brack 1}\right){n-t-1\brack \ell-t-1}}{2{t+1\brack 1}{k+1\brack t+2}{n-t-2\brack \ell-t-2}}=\frac{{k+1\brack t+1}-2{t+1\brack 1}{k-t+1\brack 1}}{2{t+1\brack 1}{k+1\brack t+2}}\cdot \frac{2^{n-t-1}-1}{2^{\ell-t-1}-1}\geq 1.$$
	Therefore, we have 
	\begin{equation*}
		\begin{aligned}
			\frac{m_{2}(n,k,\ell,t)-2{t+1\brack 1}{k-t+1\brack 1}{n-t-1\brack \ell-t-1}}{2{t+1\brack 1}{k+1\brack t+2}{n-t-2\brack \ell-t-2}}
			= \frac{\left( {k+1\brack t+1}-2{t+1\brack 1}{k-t+1\brack 1}\right){n-t-1\brack \ell-t-1}}{2{t+1\brack 1}{k+1\brack t+2}{n-t-2\brack \ell-t-2}}-1\geq 0.
		\end{aligned}
	\end{equation*}
	This implies ${t+1\brack 1}{k-t+1\brack 1}{n-t-1\brack \ell-t-1}\leq \frac{m_{2}(n,k,\ell,t)}{2}$. By symmetry, we get   ${t+1\brack 1}{\ell-t+1\brack 1}{n-t-1\brack k-t-1}\leq \frac{m_{2}(n,\ell,k,t)}{2}$.   Hence 
	$f_{1}(n,k,\ell,t,t+1,t+1)\leq \max\{ m_{2}(n,k,\ell,t),\ m_{2}(n,\ell,k,t) \}$.
	Combining with Lemma \ref{2604191}, we have $f_{1}(n,k,\ell,t,t+1,t+1)< \max\{m_{1}(n,k,\ell,t),m_{1}(n,\ell,k,t)\}$.
	
	Next, we prove $\max\{m_{1}(n,k,\ell,t), m_{1}(n,\ell,k,t) \}>f_{2}(n,k,\ell,t,t+2,t+2)$.  Using $n\geq k+\ell-t+1+3\delta_{2,q}+\delta_{3,q}$ and  Lemma \ref{2604282}, we know 
	\begin{equation*}
		\begin{aligned}
			\frac{{k\brack t}{k-t+1\brack 1}^{2}{n-t-2\brack \ell-t-2}}{{k+1\brack t+1}{n-t-1\brack \ell-t-1}}&= \frac{(q^{t+1}-1)(q^{k-t+1}-1)^{2}(q^{\ell-t-1}-1)}{(q^{k+1}-1)(q-1)^{2}(q^{n-t-1}-1)}\leq \frac{1}{q^{k-t}}\cdot\frac{q^{2k-2t+2}}{(q-1)^{2}}\cdot \frac{1}{q^{n-\ell}}\\
			&=\frac{1}{(q-1)^{2}}\cdot \frac{1}{q^{n-k-\ell+t-2}}\leq \frac{4}{9}.		\end{aligned}
	\end{equation*}
	By symmetry, we have ${\ell\brack t}{\ell-t+1\brack 1}^{2}{n-t-2\brack k-t-2}\leq \frac{4}{9}{\ell+1\brack t+1}{n-t-1\brack k-t-1}$. Hence $f_{2}(n,k,\ell,t,t+2,t+2)\leq \frac{8}{9}M(n,k,\ell,t)$. 		
	 Lemma \ref{2604191} implies  $m_{1}(n,k,\ell,t)>\frac{11}{12}{k+1\brack t+1}{n-t-1\brack \ell-t-1}$ and $m_{1}(n,\ell,k,t)>\frac{11}{12}{\ell+1\brack t+1}{n-t-1\brack k-t-1}$. This implies the desired result. 
\end{proof}

\begin{lem}\label{2605294}
	Let $\ell\geq t+2\geq 4$ and $n\geq \ell+t+2$. Then 
	$$m_{1}(n,t+1,\ell,t)> f_{2}(n,t+1,\ell,t,t+2,t+1).$$
\end{lem}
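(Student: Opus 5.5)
Writing $k=t+1$ in the definition \eqref{2605193}, and using ${n-t-1\brack 0}=1$ and ${t+1\brack t}={t+1\brack 1}$, the right-hand side is explicit:
$$f_{2}(n,t+1,\ell,t,t+2,t+1)={\ell\brack t}{\ell-t+1\brack 1}+(q+1)^{2}{t+1\brack 1}{n-t-2\brack \ell-t-2}.$$
For the left-hand side I will not compute $m_{1}$ directly. Instead I will use Lemma \ref{2604191}, which gives $m_{1}(n,t+1,\ell,t)>m_{2}(n,t+1,\ell,t)$. With $k=t+1$ we have ${k+1\brack t+1}={t+2\brack 1}$ and ${k+1\brack t+2}=1$, so
$$m_{2}(n,t+1,\ell,t)={t+2\brack 1}{n-t-1\brack \ell-t-1}-q{t+1\brack 1}{n-t-2\brack \ell-t-2}.$$
It therefore suffices to show
$${t+2\brack 1}{n-t-1\brack \ell-t-1}\geq (q^{2}+3q+1){t+1\brack 1}{n-t-2\brack \ell-t-2}+{\ell\brack t}{\ell-t+1\brack 1}.$$
I will prove this by showing that each term on the right is at most half of the left-hand side.

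For the first term, note that ${n-t-1\brack \ell-t-1}/{n-t-2\brack \ell-t-2}=(q^{n-t-1}-1)/(q^{\ell-t-1}-1)$. By Lemma \ref{2604282} this is at least $q^{n-\ell}\geq q^{t+2}$, using $n\geq \ell+t+2$. We also have ${t+2\brack 1}\geq q{t+1\brack 1}$. So it is enough that $q^{t+3}\geq 2(q^{2}+3q+1)$, which holds for $t\geq 2$ and every $q\geq 2$ since $q^{5}\geq 2q^{2}+6q+2$.

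The second term is where the real work lies. First I use $n-t-1\geq \ell+1$ and monotonicity in the top argument to get ${n-t-1\brack \ell-t-1}\geq {\ell+1\brack \ell-t-1}={\ell+1\brack t+2}$. Next I write the ratio as
$${\ell+1\brack t+2}\Big/{\ell\brack t}=\frac{(q^{\ell+1}-1)(q^{\ell-t}-1)}{(q^{t+2}-1)(q^{t+1}-1)}.$$
The target then becomes
$$2(q^{\ell-t+1}-1)(q^{t+2}-1)(q^{t+1}-1)\leq (q-1)\,{t+2\brack 1}(q^{\ell+1}-1)(q^{\ell-t}-1),$$
and $(q-1){t+2\brack 1}=q^{t+2}-1$ cancels that factor. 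What remains is $2(q^{\ell-t+1}-1)(q^{t+1}-1)\leq (q^{\ell+1}-1)(q^{\ell-t}-1)$. Comparing leading powers, the left side is about $2q^{\ell+2}$ and the right side about $q^{2\ell-t+1}$. Since $\ell\geq t+2$, we have $2\ell-t+1\geq \ell+3$, so the margin is a factor $q^{\ell-t-1}\geq q$, which beats the constant $2$ as long as the lower-order terms are handled.

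The only delicate point is the boundary case $q=2$, $\ell=t+2$, where the margin is exactly a factor $2$ against the constant $2$. There I will check the inequality exactly. With $m=t+1$ it reads $2(2^{3}-1)(2^{m}-1)\leq (2^{m+2}-1)\cdot 3$, that is, $14\cdot 2^{m}-14\leq 12\cdot 2^{m}-3$. This \emph{fails}, so the half-and-half split is too crude in this case. Instead I will keep the full slack: the first term actually uses at most a fraction $(q^{2}+3q+1)/q^{t+3}\leq 11/32$ of the left-hand side when $q=2$, $t\geq 2$. So it suffices to bound the second term by the remaining $21/32$ of the left-hand side, and the exact computation above shows it is at most about $14/24$ of it, which is smaller. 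I expect this bookkeeping of constants for $q=2$ and small $\ell-t$ to be the main obstacle. For $\ell\geq t+3$ or $q\geq 3$ the crude estimates should suffice directly.
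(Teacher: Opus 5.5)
Your proposal is correct and follows essentially the paper's route: both arguments normalize by ${t+2\brack 1}{n-t-1\brack \ell-t-1}$, bound the two pieces of $f_{2}$ as fractions of it, and close with Lemma \ref{2604191}. Both also get the same sharp $7/12$ bound for the ${\ell\brack t}{\ell-t+1\brack 1}$ term when $q=2$, $\ell=t+2$. The only difference is cosmetic. You use $m_{1}>m_{2}$ and absorb the subtracted term into the coefficient $q^{2}+3q+1$, giving $11/32+7/12<1$. The paper instead uses the normalized bound $m_{1}>\frac{7}{8}{t+2\brack 1}{n-t-1\brack \ell-t-1}$ against $9/32+7/12=83/96$. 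The cases you left as ``crude estimates suffice'' ($q\geq 3$ or $\ell\geq t+3$) do check out, since $2(q^{\ell-t+1}-1)(q^{t+1}-1)\leq (q^{\ell+1}-1)(q^{\ell-t}-1)$ holds there.
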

\begin{proof}
	Note that $f_{2}(n,t+1,\ell,t,t+2,t+1)/ {t+2\brack 1}{n-t-1\brack \ell-t-1}$ is equal to
	\begin{equation*}
		\begin{aligned}
		 &\frac{{\ell\brack t}{\ell-t+1\brack 1}}{ {t+2\brack 1}{n-t-1\brack \ell-t-1}}+\frac{(q^{t+1}-1)(q+1)^{2}(q^{\ell-t-1}-1)}{(q^{t+2}-1)(q^{n-t-1}-1)}\\
			 =&\ \frac{(q^{\ell-t+1}-1)(q^{t+1}-1)}{(q^{\ell-t}-1)(q^{t+2}-1)}\cdot\prod_{i=1}^{\ell-t-1}\frac{q^{\ell-i+1}-1}{q^{n-t-i}-1}+\frac{(q^{t+1}-1)(q+1)^{2}(q^{\ell-t-1}-1)}{(q^{t+2}-1)(q^{n-t-1}-1)}.
		\end{aligned}
	\end{equation*}
	We now estimate the above two terms separately using Lemma \ref{2604282}. 
	If $\ell=t+2$, then the first term is $\frac{(q^{3}-1)(q^{t+1}-1)(q^{\ell}-1)}{(q^{2}-1)(q^{t+2}-1)(q^{n-t-1}-1)}$ and 
	$$\frac{(q^{3}-1)(q^{t+1}-1)(q^{\ell}-1)}{(q^{2}-1)(q^{t+2}-1)(q^{n-t-1}-1)}\leq \frac{q^{3}-1}{q^{2}-1}\cdot\frac{1}{q}\cdot\frac{1}{q^{n-\ell-t-1}}\leq \frac{q^{2}+q+1}{q^{2}(q+1)}=\frac{1}{q+1}+\frac{1}{q^{2}}\leq \frac{7}{12}.$$
	If $\ell\geq t+3$, then for the first term, we get
	\begin{equation*}
		\begin{aligned}
			\frac{(q^{\ell-t+1}-1)(q^{t+1}-1)}{(q^{\ell-t}-1)(q^{t+2}-1)}\cdot\prod_{i=1}^{\ell-t-1}\frac{q^{\ell-i+1}-1}{q^{n-t-i}-1}\leq q\cdot \frac{1}{q^{(\ell-t-1)(n-\ell-t-1)}}\leq \frac{1}{q}\leq \frac{7}{12}.
		\end{aligned}
	\end{equation*}
	For the second term, we always know
	\begin{equation*}
		\begin{aligned}
			\frac{(q^{t+1}-1)(q+1)^{2}(q^{\ell-t-1}-1)}{(q^{t+2}-1)(q^{n-t-1}-1)}\leq \frac{(q+1)^{2}}{q}\cdot\frac{1}{q^{n-\ell}}\leq \frac{(q+1)^{2}}{q^{t+3}}\leq \frac{9}{32}.
		\end{aligned}
	\end{equation*}
	Therefore, we have $f_{2}(n,t+1,\ell,t,t+2,t+1)\leq \frac{83}{96}{t+2\brack 1}{n-t-1\brack \ell-t-1}$. It follows from  Lemma \ref{2604191} that the desired result holds.
\end{proof}

\begin{lem}\label{2605071}
	Let $\min\{k,\ell\}\geq t+2\geq 4$, $n\geq k+\ell-t+1+3\delta_{2,q}+\delta_{3,q}+\delta_{2,q}\delta_{2,t}$. Then 
	$$\max\{m_{1}(n,k,\ell,t),\ m_{1}(n,\ell,k,t)\}>f_{3}(n,k,\ell,t,t+1,t+2, z)$$
	for any $z\in \left\{{s\brack t}{k-t+1\brack 1}+q^{2(s-t+1)}{s\brack t-1}{k-s\brack 1}^{2}: t-1\leq s\leq k-2 \right\}$.
\end{lem}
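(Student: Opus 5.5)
We compare the three terms of $f_{3}(n,k,\ell,t,t+1,t+2,z)$ against the single quantity ${k+1\brack t+1}{n-t-1\brack \ell-t-1}$, which by Lemma \ref{2604191} satisfies $m_{1}(n,k,\ell,t)>\left(1-\frac{1}{(q-1)q^{n-k-\ell+t}}\right){k+1\brack t+1}{n-t-1\brack \ell-t-1}$. Since $\max\{m_{1}(n,k,\ell,t),m_{1}(n,\ell,k,t)\}\geq m_{1}(n,k,\ell,t)$, it is enough to show
$$\frac{f_{3}(n,k,\ell,t,t+1,t+2,z)}{{k+1\brack t+1}{n-t-1\brack \ell-t-1}}\leq 1-\frac{1}{(q-1)q^{n-k-\ell+t}}.$$
Writing $N={k+1\brack t+1}{n-t-1\brack \ell-t-1}$, the three ratios to bound are
$$R_{1}=\frac{{t+1\brack t}{\ell-t+1\brack 1}^{2}{n-t-2\brack k-t-2}}{N},\quad R_{2}=\frac{{k\brack t}{k-t+1\brack 1}^{t+2-t}{n-t-2\brack \ell-t-2}}{N}\cdot\frac{1}{1},\quad R_{3}=\frac{z{n-t-1\brack \ell-t-1}}{N}=\frac{z}{{k+1\brack t+1}}.$$
Note that the middle term of $f_{3}$ with $x=t+1$ is ${k\brack t}{k-t+1\brack 1}^{2}{n-t-2\brack \ell-t-2}$, which is exactly the quantity already bounded by $\frac{1}{(q-1)^{2}q^{n-k-\ell+t-2}}N$ in the proof of Lemma \ref{2605293}. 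This gives $R_{2}\leq 4/9$ in general, and it is much smaller once the extra $\delta$ terms in $n$ are used.

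\textbf{Bounding $R_{1}$.} The term $R_{1}$ has ${n-t-2\brack k-t-2}$ against ${n-t-1\brack \ell-t-1}$. When $\ell\geq k$, the bound $R_{1}\leq q^{-\Omega(n-k-\ell+t)}$ follows from Lemma \ref{2604282}, in the same way as the $\ell>k$ computation in Lemma \ref{260501}. When $\ell<k$ the ratio can fail to be small, and then I would compare $R_{1}$ instead with ${\ell+1\brack t+1}{n-t-1\brack k-t-1}$ and use $m_{1}(n,\ell,k,t)$. Concretely, I intend to split according to which term of $M(n,k,\ell,t)$ is larger, and to bound each of the three terms by a small multiple of $M(n,k,\ell,t)$, as was done in Lemma \ref{2605293}.

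\textbf{Bounding $R_{3}$.} This is the main obstacle, since $z$ does not decay with $n$. First I maximize $z(s)={s\brack t}{k-t+1\brack 1}+q^{2(s-t+1)}{s\brack t-1}{k-s\brack 1}^{2}$ over $t-1\leq s\leq k-2$, by a monotonicity or convexity argument in $s$ similar to the claim in Lemma \ref{260501}. The second summand grows roughly like $q^{2(s-t+1)+(t-1)(s-t+1)+2(k-s)}$, while ${k+1\brack t+1}\approx q^{(t+1)(k-t)}$. At the endpoint $s=k-2$ this gives $z\approx q^{(t+1)(k-t-1)+2+\dots}$, so that $z/{k+1\brack t+1}\approx c\,q^{-(t+1)+O(1)}$, which is small because $t\geq 2$. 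The first summand contributes about $q^{t(s-t)+k-t-(t+1)(k-t)}$, which is also small. I would verify each endpoint, $s=t-1$ and $s=k-2$, explicitly, and then obtain a uniform constant bound such as $R_{3}\leq 1/3$.

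\textbf{The case $q=2$.} Here the constants are tight, and this is where the extra $\delta_{2,q}\delta_{2,t}$ in the lower bound for $n$ enters. For $q=2$ I would replace $N$ by the sharper $m_{2}(n,k,\ell,t)$ and the cruder inequalities by exact products, as in the $q=2$ half of Lemma \ref{2605293}. Finally, summing the three ratio bounds together with $\frac{1}{(q-1)q^{n-k-\ell+t}}$ should give a total less than $1$, handled separately for $q=2$, $q=3$, and $q\geq4$ (with $t=2$ versus $t\geq3$ distinguished when $q=2$).
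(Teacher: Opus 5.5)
\textbf{Verdict: genuine gap.} Your architecture matches the paper's, but the quantitative part, which is the whole content of this lemma, is missing and partly wrong.

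The shared architecture is: show $z(s)$ is increasing, so only $z_0=z(k-2)$ matters; bound the first term of $f_3$ against ${\ell+1\brack t+1}{n-t-1\brack k-t-1}$ and the other two against ${k+1\brack t+1}{n-t-1\brack \ell-t-1}$, giving $f_3\le c\,M(n,k,\ell,t)$; finish with Lemma \ref{2604191} applied to both $m_1(n,k,\ell,t)$ and $m_1(n,\ell,k,t)$, treating $q=t=2$ separately. Your opening reduction to $m_1(n,k,\ell,t)$ alone is invalid, as you later notice. No split on $\ell$ versus $k$ is needed: the paper always measures the first term against the other main term.

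The one quantitative judgement you commit to is wrong, and it sits exactly where the lemma is delicate. $R_3$ is the dominant term, not a small one.
\begin{itemize}
\item The general estimate $z_0/{k+1\brack t+1}\le \frac{1}{q^{2t-1}(q-1)}+\frac{(q+1)^2}{q^{t+1}}$ already exceeds $1$ when $q=t=2$.
\item As $k\to\infty$, $z_0/{k+1\brack t+1}$ tends to about $0.52$ for $(q,t)=(3,2)$, about $0.49$ for $(q,t)=(2,3)$, and to $\frac{217}{256}\approx 0.85$ for $q=t=2$.
\end{itemize}
So ``$R_3\le 1/3$'' is false for small $q,t$, and ``small because $t\ge 2$'' fails precisely at $t=2$.

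Consider $q=t=2$. There $n\ge k+\ell+3$, and Lemma \ref{2604191} supplies only the factor $\frac{31}{32}=\frac{248}{256}$. The suprema $\frac{7}{64}$ of $R_2$ and $\frac{217}{256}$ of $R_3$ already use up $\frac{245}{256}$. Yet the first term, measured against ${\ell+1\brack 3}{n-3\brack k-3}$, can be about $0.017>\frac{3}{256}$ (take $\ell=4$, $n=k+7$, $k$ large). So adding three bounds that do not depend on $\ell$, as your last paragraph proposes, does not close the inequality.

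The missing idea is the paper's coupling in $\ell$:
\begin{itemize}
\item From $2^{n-3}-1\ge 2^{\ell+1}(2^{k-1}-1)$ it gets $R_2\le\frac{7}{64}-\frac{7}{64}\cdot 2^{-(\ell-3)}$.
\item It bounds the first term by $\frac{49}{1280}\cdot 2^{-(\ell-3)}{\ell+1\brack 3}{n-3\brack k-3}$, which is absorbed by the negative part of the $R_2$ bound.
\item It uses the sharp estimate $z_0\le\frac{217}{256}{k+1\brack 3}$, which comes from $(2^{k-1}-1)(2^{k}-1)(2^{k+1}-1)\ge 2^{2k+2}(2^{k-2}-1)$.
\end{itemize}
Together these give $f_3\le\frac{245}{256}M(n,k,\ell,2)<\frac{31}{32}M(n,k,\ell,2)$. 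Your closing ``should give a total less than $1$'' is therefore the unproved core of the lemma, not a routine check.
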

\begin{proof}
	Pick $t-1\leq s\leq k-3$. By $t\geq 2$ and $k-s\geq 3$, we have
	$$q^{t+1}(q^{k-s-1}-1)^{2}\geq  q^{2k-2s+t-2} +q^{k-s+t+1}-2q^{k-s+t}+q^{t+1}\geq (q^{k-s}-1)^{2}.$$ It follows from Lemma \ref{2604282} that
	\begin{equation*}
		\begin{aligned}
			\frac{q^{2(s-t+2)}{s+1\brack t-1}{k-s-1\brack 1}^{2}}{q^{2(s-t+1)}{s\brack t-1}{k-s\brack 1}^{2}}=\frac{q^{2}(q^{s+1}-1)(q^{k-s-1}-1)^{2}}{(q^{s-t+2}-1)(q^{k-s}-1)^{2}}\geq \frac{q^{t+1}(q^{k-s-1}-1)^{2}}{(q^{k-s}-1)^{2}}\geq 1.
		\end{aligned}
	\end{equation*}
Hence ${s\brack t}{k-t+1\brack 1}+q^{2(s-t+1)}{s\brack t-1}{k-s\brack 1}^{2}$ increases as $s$ increases from $t-1$ to $k-2$.

Note that $f_{3}(n,k,\ell,t,t+1,t+2, z+1)\geq f_{3}(n,k,\ell,t,t+1,t+2,z)$. Let $$z_{0}={k-2\brack t}{k-t+1\brack 1}+q^{2(k-t-1)}(q+1)^{2}{k-2\brack t-1}.$$ It is sufficient to show 
\begin{equation}\label{2605081}
	\begin{aligned}
		f_{3}(n,k,\ell,t,t+1,t+2, z_{0})<\max\{m_{1}(n,k,\ell,t),\ m_{1}(n,\ell,k,t)\}.
	\end{aligned}
\end{equation}

\medskip
\noindent \textbf{Case 1.}  $q=2$ and $t\geq 3$,   or $q\geq 3$.  
\medskip

We now estimate the three terms in $f_{3}(n,k,\ell,t, t+1,t+2,z_{0})$ separately using Lemma \ref{2604282}. For the first term, we have 
\begin{equation*}
	\begin{aligned}
		\frac{{t+1\brack 1}{\ell-t+1\brack 1}^{2}{n-t-2\brack k-t-2}}{{\ell+1\brack t+1}{n-t-1\brack k-t-1}}
		&=\frac{(q^{k-t-1}-1)(q^{t+1}-1)(q^{\ell-t+1}-1)}{(q^{n-t-1}-1)(q-1)^{2}}\cdot\prod_{i=0}^{t-1}\frac{q^{t-i+1}-1}{q^{\ell-i+1}-1}\\
		&\leq \frac{1}{q^{n-k}}\cdot \frac{q^{\ell+2}}{(q-1)^{2}}\cdot\frac{1}{q^{t(\ell-t)}}= \frac{1}{q^{n-k}}\cdot \frac{1}{(q-1)^{2}}\cdot\frac{1}{q^{(t-1)(\ell-t-1)-3}}.
	\end{aligned}
\end{equation*}
For the second term, we know 
\begin{equation*}
	\begin{aligned}
		\frac{{k\brack t}{k-t+1\brack 1}^{2}{n-t-2\brack \ell-t-2}}{{k+1\brack t+1}{n-t-1\brack \ell-t-1}}&=\frac{(q^{t+1}-1)(q^{k-t+1}-1)^{2}(q^{\ell-t-1}-1)}{(q^{k+1}-1)(q-1)^{2}(q^{n-t-1}-1)}\leq \frac{1}{q^{k-t}}\cdot \frac{1}{q^{n-\ell}}\cdot \frac{q^{2k-2t+2}}{(q-1)^{2}}\\
		&=\frac{q^{2}}{(q-1)^{2}}\cdot\frac{1}{q^{n-k-\ell+t}}.
	\end{aligned}
\end{equation*}
Note that  $\frac{q^{k-t}-1}{q^{k+1}-1}\cdot\frac{q^{k-t-1}-1}{q^{k}-1}\cdot\frac{q^{t+1}-1}{q^{k-1}-1}\leq \frac{1}{q^{k+t}}$ and $\frac{q^{k-t}-1}{q^{k+1}-1} \cdot\frac{q^{t+1}-1}{q^{k}-1}\cdot\frac{q^{t}-1}{q^{k-1}-1}\cdot q^{2(k-t-1)}\leq \frac{1}{q^{t+1}}$.  Then
\begin{equation*}
	\begin{aligned}
		&\ \frac{{k-2\brack t}{k-t+1\brack 1}+q^{2(k-t-1)}(q+1)^{2}{k-2\brack t-1}}{{k+1\brack t+1}}\\
		=&\ \frac{(q^{k-t}-1)(q^{k-t-1}-1)(q^{t+1}-1){k-t+1\brack 1}}{(q^{k+1}-1)(q^{k}-1)(q^{k-1}-1)}+\frac{(q^{k-t}-1)(q^{t+1}-1)(q^{t}-1)q^{2(k-t-1)}(q+1)^{2}}{(q^{k+1}-1)(q^{k}-1)(q^{k-1}-1)}\\
		\leq &\ \frac{1}{q^{2t-1}(q-1)}+\frac{(q+1)^{2}}{q^{t+1}}.
	\end{aligned}
\end{equation*}

By the above discussion, we obtain the following inequalities.
If $q= 2$ and $t\geq 3$,  then 
\begin{equation*}
	\begin{aligned}
		f_{3}(n,k,\ell,t, t+1,t+2,z_{0})&\leq \frac{{\ell+1\brack t+1}{n-t-1\brack k-t-1}}{32}+\frac{{k+1\brack t+1}{n-t-1\brack \ell-t-1}}{4}+\frac{19{k+1\brack t+1}{n-t-1\brack\ell-t-1}}{32}\leq \frac{7}{8}M(n,k,\ell,t).
	\end{aligned}
\end{equation*}
If $q= 3$,  then 
\begin{equation*}
	\begin{aligned}
		f_{3}(n,k,\ell,t,t+1,t+2, z_{0})&\leq \frac{{\ell+1\brack t+1}{n-t-1\brack k-t-1}}{36}+\frac{{k+1\brack t+1}{n-t-1\brack \ell-t-1}}{4}+\frac{33{k+1\brack t+1}{n-t-1\brack\ell-t-1}}{54}\leq \frac{8}{9}M(n,k,\ell,t).
	\end{aligned}
\end{equation*}
If $q\geq 4$, then 
\begin{equation*}
	\begin{aligned}
		f_{3}(n,k,\ell,t, t+1,t+2,z_{0})&\leq \frac{{\ell+1\brack t+1}{n-t-1\brack k-t-1}}{36}+\frac{4{k+1\brack t+1}{n-t-1\brack \ell-t-1}}{9}+\frac{19{k+1\brack t+1}{n-t-1\brack\ell-t-1}}{48}\leq \frac{125}{144}M(n,k,\ell,t).
	\end{aligned}
\end{equation*}
Therefore,  Lemma \ref{2604191} implies (\ref{2605081}). The desired result holds.

\medskip
\noindent \textbf{Case 2.} $q= 2$  and $t= 2$.  
\medskip

The similar argument as in Case 1, based on Lemma \ref{2604282}, applies to this case. For the first term, we have 
\begin{equation*}
	\begin{aligned}
		\frac{{3\brack 1}{\ell-1\brack 1}^{2}{n-4\brack k-4}}{{\ell+1\brack 3}{n-3\brack k-3}}&=\frac{147(2^{\ell-1}-1)(2^{k-3}-1)}{(2^{\ell+1}-1)(2^{\ell}-1)(2^{n-3}-1)}\leq \frac{147}{4(2^{\ell}-1)}\cdot\frac{1}{2^{\ell+3}}\leq \frac{49}{1280}\cdot\frac{1}{2^{\ell-3}}.
	\end{aligned}
\end{equation*}
From $2^{n-3}-1\geq 2^{k+\ell}-1\geq 2^{\ell+1}(2^{k-1}-1)$, we obtain 
\begin{equation*}
	\begin{aligned}
		\frac{{k\brack 2}{k-1\brack 1}^{2}{n-4\brack \ell-4}}{{k+1\brack 3}{n-3\brack \ell-3}}&=\frac{7(2^{k-1}-1)^{2}(2^{\ell-3}-1)}{(2^{k+1}-1)(2^{n-3}-1)}\leq 7\cdot \frac{1}{4}\cdot\left(\frac{1}{16}-\frac{1}{2^{\ell+1}}\right)=\frac{7}{64}-\frac{7}{64}\cdot\frac{1}{2^{\ell-3}}.
	\end{aligned}
\end{equation*}
Note that $(2^{k-1}-1)(2^{k}-1)(2^{k+1}-1)\geq 2^{2k+2}(2^{k-2}-1)$. For the third term, we know 
\begin{equation*}
	\begin{aligned}
		\frac{{k-2\brack 2}{k-1\brack 1}+9\cdot2^{2(k-3)}{k-2\brack 1}}{{k+1\brack 3}}
		&=\frac{7(2^{k-2}-1)(2^{k-3}-1)}{(2^{k+1}-1)(2^{k}-1)}+\frac{189\cdot2^{2k-6}(2^{k-2}-1)}{(2^{k+1}-1)(2^{k}-1)(2^{k-1}-1)}\\
		&\leq 7\cdot\frac{1}{8}\cdot\frac{1}{8}+\frac{189}{256}=\frac{217}{256}.
	\end{aligned}
\end{equation*}
Therefore, we get that	$f_{3}(n,k,\ell,2, 3,4,z_{0})$ is no more than
\begin{equation*}
	\begin{aligned}
	 \frac{49{\ell+1\brack 3}{n-3\brack k-3}}{1280\cdot2^{\ell-3}}+\frac{7{k+1\brack 3}{n-3\brack \ell-3}}{64}\cdot\left(1-\frac{1}{2^{\ell-3}}\right)+\frac{217{k+1\brack 3}{n-3\brack\ell-3}}{256}\leq \frac{245}{256}M(n,k,\ell,2).
	\end{aligned}
\end{equation*}
It follows from Lemma \ref{2604191} that (\ref{2605081}) holds. This completes the proof.
\end{proof}

\begin{lem}\label{2605296}
	Let $\ell\geq t+2\geq 4$ and  $n\geq \ell+t+2$. Then 
	$$m_{1}(n,t+1,\ell,t)>f_{3}(n,t+1,\ell,t,t+1,t+1,(q+1)^{2}).$$
\end{lem}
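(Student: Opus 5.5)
We follow the pattern of Lemmas \ref{2605294} and \ref{2605293}: bound $f_3$ by a constant fraction of ${t+2\brack 1}{n-t-1\brack \ell-t-1}$, then compare with $m_1$ via Lemma \ref{2604191}. With $k=t+1$, $x=y=t+1$, $z=(q+1)^2$, expand
$$f_{3}(n,t+1,\ell,t,t+1,t+1,(q+1)^{2})={t+1\brack t}{\ell-t+1\brack 1}+{t+1\brack t}(q+1)^{2}{n-t-2\brack \ell-t-2}+(q+1)^{2}{n-t-1\brack \ell-t-1},$$
using ${n-t-1\brack 0}=1$ in the first term and ${2\brack 1}^{2}=(q+1)^2$ in the second. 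Lemma \ref{2604191} with $k=t+1$ gives $m_{1}(n,t+1,\ell,t)>\bigl(1-\frac{1}{(q-1)q^{n-\ell-1}}\bigr){t+2\brack 1}{n-t-1\brack \ell-t-1}$. Since $n\ge \ell+t+2$ and $t\ge 2$, we have $n-\ell-1\ge t+1\ge 3$, so this factor is at least $1-\frac{1}{(q-1)q^{3}}\ge \frac{15}{16}$.

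We then divide each of the three terms by $D={t+2\brack 1}{n-t-1\brack \ell-t-1}$ and estimate them with Lemma \ref{2604282}.

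\textbf{Third term.} $(q+1)^2/{t+2\brack 1}\le (q+1)^2/(q^{t+1}+\cdots+1)\le (q+1)^2/(q^3+q^2+q+1)=(q+1)/(q^2+1)$ for $t\ge 2$. At $q=2$ this equals $3/5$, which is too large. So we keep the exact factor ${t+2\brack 1}$: for $t=2$, $q=2$ it is $9/15=3/5$; for $t\ge 3$, $q=2$ it is at most $9/31$; for larger $q$ the bound is about $1/q$.

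\textbf{Second term.} It equals $\frac{{t+1\brack 1}(q+1)^2(q^{\ell-t-1}-1)}{{t+2\brack 1}(q^{n-t-1}-1)}\le \frac{(q+1)^2}{q}\cdot q^{-(n-\ell)}\le (q+1)^2q^{-t-3}$, which is at most $9/32$ at $(q,t)=(2,2)$.

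\textbf{First term.} Here ${\ell-t+1\brack 1}/{n-t-1\brack \ell-t-1}$ is tiny: if $\ell=t+2$ it is about $q^{3}/q^{n-t-2}\le q^{-\ell+5-t}\le 1/q^{t+1}$-ish, and it is smaller for larger $\ell$. It therefore contributes at most about $1/16$.

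The main obstacle is the worst case $q=2$, $t=2$, $\ell=4$, $n=8$. There the crude sum is $3/5+9/32+\text{small}$, which may exceed $15/16$. So we compute exactly. With ${4\brack 1}=15$ and ${5\brack 2}=155$,
$$f_3=7\cdot 7+7\cdot 9\cdot{4\brack 0}+9\cdot{5\brack 1}=49+63+279=391,\qquad m_1=15+|\mm(C;4,2)|.$$
Here $|\mm(C;4,2)|\ge {4\brack 3}{5\brack 1}-\dots$, roughly $15\cdot 31$ minus overlaps, which clearly exceeds $391$. If the general estimate is too weak, we instead use the sharper lower bound $m_{2}(n,t+1,\ell,t)={t+2\brack 1}{n-t-1\brack \ell-t-1}-q{t+1\brack 1}{n-t-2\brack \ell-t-2}$ from Lemma \ref{2604191} and compare it term by term. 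The $(q+1)^2{n-t-1\brack \ell-t-1}$ term is then absorbed using ${t+2\brack 1}-(q+1)^2\ge q^{t+1}-q^2+\dots$, and the remaining terms are $O(q^{-(n-\ell)})$ relative. We carry out this subtraction form as the main route, and handle the small case $(q,t)=(2,2)$ separately by direct computation.
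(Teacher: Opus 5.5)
The route you designate as the main one (compare $f_3$ with $m_2(n,t+1,\ell,t)={t+2\brack 1}{n-t-1\brack \ell-t-1}-q{t+1\brack 1}{n-t-2\brack \ell-t-2}$, then use $m_1>m_2$ from Lemma~\ref{2604191}) is exactly the paper's. But you never carry the comparison out, and what you offer in its place does not close the argument.

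\textbf{The case $(q,t)=(2,2)$ is not small.} Since $\ell\ge 4$ and $n\ge \ell+4$ are unbounded, a direct check at $(\ell,n)=(4,8)$ (where indeed $f_3=391<466=m_1$) settles only one instance. With the bounds you state, the ratio argument fails for \emph{every} $\ell$ when $(q,t)=(2,2)$ and $n=\ell+4$. There the factor from Lemma~\ref{2604191} is $1-\frac{1}{(q-1)q^{3}}=\frac78$, not $\frac{15}{16}$, and already $\frac35+\frac{9}{32}>\frac78$. The ratio route is not hopeless (at $(4,8)$ one has $391/465<\frac78$), but it needs sharper estimates than yours across the whole family.

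\textbf{The subtraction form is tight, not a small perturbation.} The leftover terms are not $O(q^{-(n-\ell)})$ relative to the main term. One has
\[
m_2-f_3=\Bigl({t+2\brack 1}-(q+1)^2\Bigr){n-t-1\brack \ell-t-1}-(q^2+3q+1){t+1\brack 1}{n-t-2\brack \ell-t-2}-{t+1\brack 1}{\ell-t+1\brack 1}.
\]
The factor ${t+1\brack 1}\approx q^{t}$ in the middle term cancels the gain $q^{-(n-\ell)}\le q^{-(t+2)}$. For $(q,t)=(2,2)$ and $n=\ell+4$, the ratio of the middle term to ${n-3\brack \ell-3}$ tends to $\frac{77}{16}$ as $\ell\to\infty$, against a leading coefficient ${4\brack 1}-9=6$. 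So the inequality is tight at the hypothesis $n\ge \ell+t+2$, and verifying it precisely is the whole content of the lemma.

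\textbf{How the paper does it, uniformly.} It uses ${t+2\brack 1}-(q+1)^2\ge q^3-q$ and splits into two cases.

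For $\ell=t+2$, the resulting lower bound equals $\frac{q(q^2-1)(q^{n-t-1}-1)-2(q+1)^2(q^{t+1}-1)}{q-1}$. For $n\ge 2t+4$ this is at least $\frac{(q+1)(q^{t+1}-1)(q^4-q^3-2q-2)}{q-1}\ge 0$.

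For $\ell\ge t+3$, the bounds $\frac{q^{n-t-1}-1}{q^{\ell-t-1}-1}\ge q^{t+2}$ and ${t+1\brack 1}\le q^{t+1}$ leave the coefficient $q^{t+1}(q^4-2q^2-3q-1)\ge q^{t+1}$ on ${n-t-2\brack \ell-t-2}$. Then ${n-t-2\brack \ell-t-2}\ge q^{4(\ell-t-2)}\ge q^{\ell-t+1}$ absorbs ${t+1\brack 1}{\ell-t+1\brack 1}<q^{\ell+2}$.

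No separate treatment of $(q,t)=(2,2)$ is needed.
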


\begin{proof}
	Observe that  $m_{2}(n,t+1,\ell,t)$ minus $f_{3}(n,t+1,\ell,t,t+1,t+1,(q+1)^{2})$ is equal to 
	\begin{equation*}
		\begin{aligned}
			\left( {t+2\brack 1}-(q+1)^{2}\right){n-t-1\brack \ell-t-1}-\left(q^{2}+3q+1\right){t+1\brack 1}{n-t-2\brack \ell-t-2}-{t+1\brack 1}{\ell-t+1\brack 1},
		\end{aligned}
	\end{equation*}
	and this difference is at least $$\left(q^{3}-q \right) {n-t-1\brack \ell-t-1}-\left(q^{2}+3q+1\right){t+1\brack 1}{n-t-2\brack \ell-t-2}-{t+1\brack 1}{\ell-t+1\brack 1}.$$
	For convenience, write the lower bound as $g(n,\ell, t)$. If $\ell=t+2$, then by $n\geq 2t+4$, we have
	\begin{equation*}
		\begin{aligned}
			g(n,t+2,t)&=\frac{q(q^{2}-1)(q^{n-t-1}-1)-2(q+1)^{2}(q^{t+1}-1)}{q-1}\\
			&\geq \frac{q(q^{2}-1)(q^{t+3}-q^{2})-2(q+1)^{2}(q^{t+1}-1)}{q-1}\\
			&= \frac{(q+1)(q^{t+1}-1)(q^{4}-q^{3}-2q-2)}{(q-1)}\geq 0.
		\end{aligned}
	\end{equation*}
	Suppose $\ell\geq t+3$. From Lemma \ref{2604282}, we obtain
	\begin{equation*}
		\begin{aligned}
			g(n,\ell, t)&=\left( \frac{q(q^{2}-1)(q^{n-t-1}-1)}{q^{\ell-t-1}-1}-(q^{2}+3q+1){t+1\brack 1}\right){n-t-2\brack \ell-t-2}-{t+1\brack 1}{\ell-t+1\brack 1}\\
			&\geq \left( q^{t+3}(q^{2}-1)-q^{t+1}(q^{2}+3q+1)\right){n-t-2\brack \ell-t-2}-{t+1\brack 1}{\ell-t+1\brack 1}\\
			&=q^{t+1}(q^{4}-2q^{2}-3q-1){n-t-2\brack \ell-t-2}-{t+1\brack 1}{\ell-t+1\brack 1}.
		\end{aligned}
	\end{equation*}
	Note that $q^{4}-2q^{2}-3q-1\geq1$ and $${n-t-2\brack \ell-t-2}\geq q^{(\ell-t-2)(n-\ell)}\geq q^{4(\ell-t-2)}=q^{(\ell-t+1)+3(\ell-t)-9}\geq q^{\ell-t+1}.$$
	We further conclude $g(n,\ell, t)\geq 0$. In summary, we have $$m_{2}(n,t+1,\ell,t)\geq f_{3}(n,t+1,\ell,t,t+1,t+1,(q+1)^{2}).$$ It follows from Lemma \ref{2604191} that $f_{3}(n,t+1,\ell,t,t+1,t+1,(q+1)^{2})<m_{1}(n,t+1,\ell, t)$.
\end{proof}

\begin{lem}\label{2605312}
	Let $\ell\geq k+2\geq t+3\geq 5$ and $n\geq k+\ell+\lceil\frac{t+1}{\ell-k-1}\rceil$. 
	\begin{itemize}
		\item[\normalfont(i)]	$m_{1}(n,k,\ell,t)>{\ell\brack t}{n-t\brack k-t}+{k-t+1\brack 1}{n-t-1\brack \ell-t-1}$.
		\item[\normalfont(ii)]  $m_{1}(n,k,\ell,t)>m_{1}(n,\ell,k,t)$.
	\end{itemize}
\end{lem}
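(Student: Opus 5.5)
Both parts are reduced to the lower bound of Lemma \ref{2604191},
$$m_{1}(n,k,\ell,t)>\Bigl(1-\frac{1}{(q-1)q^{n-k-\ell+t}}\Bigr){k+1\brack t+1}{n-t-1\brack \ell-t-1}.$$
Set $N={k+1\brack t+1}{n-t-1\brack \ell-t-1}$. Since $n-k-\ell\geq 1$ and $t\geq 2$, the bracket is at least $1-\frac{1}{(q-1)q^{3}}\geq \frac{15}{16}$, with the worst case at $q=2$. The plan is then to show that each competing quantity is a small multiple of $N$, as ratios of Gaussian binomials estimated via Lemma \ref{2604282}.

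For (i) there are two terms. The second term is easy:
$$\frac{{k-t+1\brack 1}{n-t-1\brack \ell-t-1}}{N}=\frac{{k-t+1\brack 1}}{{k+1\brack t+1}}\leq \frac{q^{k-t+1}}{q-1}\cdot q^{-(t+1)(k-t)},$$
which is tiny since $k\geq t+1$ and $t\geq 2$. At worst, $k=t+1$ gives $\frac{q^{2}}{(q-1)q^{t+1}}\leq\frac14$ for $q=2,\ t=2$. In that extreme case I will instead compare exactly against $m_2$, or note that $k=t+1$ forces $\ell\geq t+3$.

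The first term is the real issue. Expand
$$\frac{{\ell\brack t}{n-t\brack k-t}}{{k+1\brack t+1}{n-t-1\brack \ell-t-1}}.$$
Rewriting ${n-t\brack k-t}/{n-t-1\brack \ell-t-1}$ as a product of $\ell-k-1$ factors of the form $\frac{q^{a}-1}{q^{b}-1}$, each factor is at most about $q^{-(n-k-\ell)+O(1)}$; more precisely the ratio is $\lesssim q^{-(\ell-k-1)(n-\ell-k)}$ times $\frac{q^{n-t}-1}{q^{\ell-t}-1}$-type corrections. The prefactor ${\ell\brack t}/{k+1\brack t+1}$ is $\lesssim q^{t(\ell-t)-(t+1)(k-t)}$. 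Collecting exponents, the ratio is bounded by roughly
$$C_q\,q^{\,t(\ell-k-1)+(\ell-k)-(\ell-k-1)(n-k-\ell)},$$
where $C_q=(q/(q-1))^{O(1)}$. The hypothesis $n\geq k+\ell+\lceil\frac{t+1}{\ell-k-1}\rceil$ is exactly what gives
$$(\ell-k-1)(n-k-\ell)\geq (t+1)+\text{(lower-order slack)},$$
which should make the exponent at most $-1$ or $-2$. The main obstacle is this bookkeeping. The factor $q^{t(\ell-k-1)}$ from the prefactor must be cancelled precisely by the product, and the hypothesis looks tuned so that only a constant like $q^{-1}$ survives. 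I would therefore write the ratio as an exact product and pair factors carefully, $\frac{q^{\ell-i}-1}{q^{n-t-i}-1}\leq q^{-(n-\ell-t)}$, rather than use crude bounds. Then I would check that the sum of the two ratios is below $\frac{15}{16}$, treating $q=2$ separately if the constants are tight.

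For (ii), bound $m_1(n,\ell,k,t)$ from above. Construction \ref{2604172} with roles swapped gives
$$m_{1}(n,\ell,k,t)\leq {\ell+1\brack 1}+{\ell+1\brack t+1}{n-t-1\brack k-t-1}.$$
The main term compared to $N$ is of the same shape as in (i), with ${\ell+1\brack t+1}$ replacing ${\ell\brack t}$ (an extra factor $\approx q^{\ell-t}$) but ${n-t-1\brack k-t-1}$ replacing ${n-t\brack k-t}$ (a gain of $\approx q^{-(n-k)}$). This is strictly smaller than the first term in (i), so the same estimate applies. The term ${\ell+1\brack 1}$ is negligible against $N\geq {n-t-1\brack \ell-t-1}\geq q^{(\ell-t-1)(n-\ell)}$, since $\ell-t-1\geq 2$. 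Combining with Lemma \ref{2604191} yields (ii).
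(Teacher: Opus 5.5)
Your framework is the paper's: bound $m_{1}(n,k,\ell,t)$ from below by a constant times $N={k+1\brack t+1}{n-t-1\brack \ell-t-1}$ via Lemma \ref{2604191}, then bound each competing term as a fraction of $N$ via Lemma \ref{2604282}. However, the one estimate that carries the content of (i) is never established, and the exponent you write for it is wrong. Your collected exponent is $t(\ell-k-1)+(\ell-k)-(\ell-k-1)(n-k-\ell)$. Combined with the hypothesis $(\ell-k-1)(n-k-\ell)\geq t+1$, it only gives an exponent at most $t(\ell-k-2)+\ell-k-1\geq 1$, i.e.\ no useful bound. What is missing is the exact identity
\[
\frac{{\ell\brack t}{n-t\brack k-t}}{{k+1\brack t+1}{n-t-1\brack \ell-t-1}}=\frac{(q^{n-t}-1)(q^{t+1}-1)}{(q^{\ell-t}-1)(q^{n-\ell+1}-1)}\prod_{i=1}^{\ell-k-1}\frac{q^{\ell-i+1}-1}{q^{n-k-i+1}-1}.
\]
Here each factor of the product is at most $q^{-(n-k-\ell)}$. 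The pairing you propose, with denominators $q^{n-t-i}-1$, does not match this structure. The true leading exponent is therefore $(t+1)-1-(\ell-k-1)(n-k-\ell)\leq -1$. The reason is that ${n-t\brack k-t}/{n-t-1\brack \ell-t-1}$ is of order $q^{k-t(\ell-k)-(\ell-k-1)(n-k-\ell)}$, not $q^{-(\ell-k-1)(n-k-\ell)}$, and its factor $q^{-t(\ell-k)}$ cancels the prefactor ${\ell\brack t}/{k+1\brack t+1}$ down to order $q^{t}$. Since $(q^{t+1}-1)q^{-(\ell-k-1)(n-k-\ell)}<1$, $\ell-t\geq 3$ and $n-\ell+1\geq 5$, the ratio is at most $\frac{1}{q(1-q^{-3})(1-q^{-5})}\leq \frac{128}{217}$. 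This is exactly the paper's computation.

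Two numerical slips would also break your final comparison.
\begin{enumerate}
\item With $n-k-\ell\geq 1$ and $t\geq 2$, Lemma \ref{2604191} only guarantees the factor $1-\frac{1}{(q-1)q^{3}}$, which is $\frac78$ at $q=2$, not $\frac{15}{16}$.
\item Your crude bound for the second term equals $\frac12$, not $\frac14$, at $(q,t,k)=(2,2,3)$. This is fatal. For $(q,t,k,\ell,n)=(2,2,3,7,11)$, which satisfies the hypotheses, the first ratio is exactly $\frac{3577}{7905}\approx 0.45$, and $0.45+\frac12$ exceeds $\frac78$ (even $\frac{15}{16}$).
\end{enumerate}
Use instead the exact cancellation ${k-t+1\brack 1}/{k+1\brack t+1}=\prod_{i=1}^{t}\frac{q^{t-i+2}-1}{q^{k-i+2}-1}\leq q^{-t(k-t)}\leq \frac14$. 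Then $\frac{128}{217}+\frac14<\frac78$ gives (i).

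For (ii), your reduction is correct and takes a different route from the paper. The main term of $m_{1}(n,\ell,k,t)$ divided by the first term in (i) equals $\frac{(q^{\ell+1}-1)(q^{k-t}-1)}{(q^{t+1}-1)(q^{n-t}-1)}<1$. So (ii) follows from the $\frac{128}{217}$ bound plus the negligible ${\ell+1\brack 1}/N$. The paper instead bounds ${\ell+1\brack t+1}{n-t-1\brack k-t-1}/N$ directly by $q^{t+1-(n-k-\ell)(\ell-k)}\leq q^{-1}$. Your route saves a computation, but only once the first-term bound in (i) has actually been proved.
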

\begin{proof} 
	From Lemma \ref{2604191}, we obtain 
	\begin{equation}\label{2606011}
	m_{1}(n,k,\ell,t)>\frac{7}{8}{k+1\brack t+1}{n-t-1\brack \ell-t-1}.
	\end{equation}

 (i)	We estimate ${\ell\brack t}{n-t\brack k-t}$ and ${k-t+1\brack 1}{n-t-1\brack \ell-t-1}$ separately using Lemma \ref{2604282}. 	
	For the first term, we have 
	\begin{equation*}
		\begin{aligned}
			\frac{{\ell\brack t}{n-t\brack k-t}}{{k+1\brack t+1}{n-t-1\brack\ell-t-1}}&=\frac{q^{n-t}-1}{(q^{\ell-t}-1)(q^{n-\ell+1}-1)}\cdot(q^{t+1}-1)\cdot\prod_{i=1}^{\ell-k-1}\frac{q^{\ell-i+1}-1}{q^{n-k-i+1}-1}\\
			&\leq \frac{1}{q(1-\frac{1}{q^{\ell-t}})(1-\frac{1}{q^{n-\ell+1}})}\cdot \frac{q^{t+1}-1}{q^{(n-k-\ell)(\ell-k-1)}}\leq \frac{128}{217}.
		\end{aligned}
	\end{equation*}
	For the second term, we know 
	$\frac{{k-t+1\brack 1}}{{k+1\brack t+1}}=\prod_{i=1}^{t}\frac{q^{t-i+2}-1}{q^{k-i+2}-1}\leq \frac{1}{q^{(k-t)t}}\leq \frac{1}{4}$. 
	 By (\ref{2606011}), we get (i).
	
	(ii) By the definition of $m_{1}(n,\ell,k,t)$, we know $m_{1}(n,\ell,k,t)\leq {\ell+1\brack t+1}{n-t-1\brack k-t-1}+{\ell+1\brack 1}$.  From Lemma \ref{2604282}, we obtain
	\begin{equation*}
		\begin{aligned}
			\frac{m_{1}(n,\ell,k,t)}{{k+1\brack t+1}{n-t-1\brack \ell-t-1}}&\leq \left(\prod_{i=1}^{t+1}\frac{q^{\ell-i+2}-1}{q^{k-i+2}-1}\right)\left(\prod_{i=1}^{\ell-k}\frac{q^{\ell-t-i}-1}{q^{n-k-i+1}-1}\right)+\frac{{\ell+1\brack 1}}{{k+1\brack t+1}{n-t-1\brack \ell-t-1}}\\
			&\leq  \frac{q^{(\ell-k+1)(t+1)}}{q^{(n-k-\ell+t+1)(\ell-k)}}+\frac{q^{\ell+1}}{q^{(t+1)(k-t)+(\ell-t-1)(n-\ell)}}\\
			&=\frac{q^{t+1}}{q^{(n-k-\ell)(\ell-k-1)+n-k-\ell}}+ \frac{q}{q^{(t+1)(k-t-1)+(\ell-t-1)(n-\ell-1)}}\leq \frac{1}{q}+\frac{1}{q^{5}}.
		\end{aligned}
	\end{equation*}
	It follows from (\ref{2606011}) that (ii) holds.	
\end{proof}

\begin{lem}\label{2604221}
	Let $k\geq 2$.  For any $0\leq x,y\leq k-1$ with $x+y\leq k$, we have
	$${x\brack 1}{k\brack 1}+{y\brack 1}{k\brack 1}+{k\brack 1}-{x+y\brack 1}\leq {k-1\brack 1}\left({k\brack 1}+q\right)+1.$$
	Moreover, equality holds if and only if $(x,y)=(1,k-1)$ or $(x,y)=(k-1,1)$.
\end{lem}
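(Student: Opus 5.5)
Write $[m]$ for ${m\brack 1}=1+q+\cdots+q^{m-1}$, so $[0]=0$ and $[m]=q^{m-1}+[m-1]$. Define
$$\Phi(x,y)=[k]\bigl([x]+[y]+1\bigr)-[x+y],$$
which is symmetric in $x$ and $y$. We want to show $\Phi(x,y)\le [k-1]\bigl([k]+q\bigr)+1$ on the region $0\le x,y\le k-1$, $x+y\le k$, with equality exactly at $(1,k-1)$ and $(k-1,1)$. The plan is to maximize $\Phi$ over this finite region by moving mass between $x$ and $y$ and then pushing $x+y$ up to $k$.

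\textbf{Step 1 (the boundary $x+y=k$).} When $x+y=k$ the last term equals $[k]$, so
$$\Phi(x,k-x)=[k]\bigl([x]+[k-x]\bigr),\qquad 1\le x\le k-1.$$
Since $[x]+[k-x]=\bigl(q^{x}+q^{k-x}-2\bigr)/(q-1)$ and $q^{x}+q^{k-x}$ is convex in $x$, this quantity is maximized at the endpoints $x=1$ and $x=k-1$. The maximum there is $[k]\bigl(1+[k-1]\bigr)$. For $2\le x\le k-2$ the inequality is strict, because $q^{x}+q^{k-x}<q+q^{k-1}$ in that range. Next, $[k]=q[k-1]+1$ gives
$$[k](1+[k-1])=[k]+[k-1][k]=[k-1]\bigl([k]+q\bigr)+1,$$
so the boundary maximum is exactly the claimed bound, attained only at $(1,k-1)$ and $(k-1,1)$.

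\textbf{Step 2 (interior points $x+y<k$).} Suppose $x+y\le k-1$. By symmetry take $x\le y$, so $y\le k-2$ since $y\le x+y\le k-1$. Replacing $y$ by $y+1$ keeps us inside the region. It increases $[k][y]$ by $[k]q^{y}$ and increases $[x+y]$ by $q^{x+y}$, so
$$\Phi(x,y+1)-\Phi(x,y)=q^{y}[k]-q^{x+y}=q^{y}\bigl([k]-q^{x}\bigr)>0,$$
where the last inequality uses $x\le k-1$, hence $q^{x}\le q^{k-1}<[k]$. Iterating this step raises $x+y$ to $k$ with strict increase at every step. Each resulting point satisfies $x\le k-1$, and since $k\ge2$ we may assume $x\ge1$ or otherwise check the point directly.

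\textbf{Step 3 (the point $x=0$).} If the iteration ends at $(0,k)$, the constraint $y\le k-1$ is violated, so that case needs separate handling. For $x=0$ we have
$$\Phi(0,y)=[k]\bigl([y]+1\bigr)-[y]=[y]\bigl([k]-1\bigr)+[k],$$
which is increasing in $y$. Taking $y=k-1$ gives $[k-1]\bigl([k]-1\bigr)+[k]$. Comparing with the bound from Step 1,
$$[k-1]\bigl([k]+q\bigr)+1-[k-1]\bigl([k]-1\bigr)-[k]=[k-1](q+1)+1-[k]=[k-1]>0,$$
using $[k]=q[k-1]+1$ again; positivity holds because $k\ge2$. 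So every point with $x=0$ is strictly below the bound.

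Hence every interior point with $x,y\ge1$ is strictly dominated by a boundary point with $x+y=k$ and $x,y\ge1$. Steps 1–3 together give the inequality, with equality exactly at $(1,k-1)$ and $(k-1,1)$.

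The main obstacle is ensuring the monotonicity path in Step 2 stays within the constraints $x,y\le k-1$. This is why the $x=0$ edge is handled separately in Step 3.
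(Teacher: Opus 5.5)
Your proof is correct. The boundary case $x+y=k$ is handled exactly as in the paper, via $q^{x}+q^{k-x}\le q^{k-1}+q$ with equality only at $x\in\{1,k-1\}$. The interior case $x+y\le k-1$ is where your route differs.

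The paper uses the single inequality $q^{x}+q^{y}\le q^{x+y}+1$, i.e. $(q^{x}-1)(q^{y}-1)\ge 0$. This bounds the left-hand side by $q{x+y\brack 1}{k-1\brack 1}+{k\brack 1}\le q{k-1\brack 1}^{2}+{k\brack 1}\le {k-1\brack 1}\bigl({k\brack 1}+q\bigr)$. In your notation it says $\Phi(x,y)\le\Phi(0,x+y)\le\Phi(0,k-1)$. So the paper moves mass in the opposite direction to yours: it collapses every interior point onto the edge $x=0$, and the value it ends with is exactly the one you compute in Step 3. This handles all interior points, including $x=0$, in one line, and shows they lie at least $1$ below the bound.

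You instead push each interior point outward to the line $x+y=k$ using $\Phi(x,y+1)-\Phi(x,y)=q^{y}\bigl({k\brack 1}-q^{x}\bigr)>0$. That is natural, but it fails on the edge $x=0$, where the path would exit the region at $(0,k)$. Hence you need Step 3 as a separate case.

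One small slip: the sentence ``so $y\le k-2$ since $y\le x+y\le k-1$'' is false when $x=0$, $y=k-1$. It is harmless because Step 3 covers $x=0$, but you should state Step 2 for $x\ge 1$ from the outset.
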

\begin{proof}
	One can check that the LHS is equal to 
	${k\brack 1}\frac{q^{x}+q^{y}-2}{q-1}+{k\brack 1}-{x+y\brack 1}$. 
	If $x+y\leq k-1$, then by $q^{x}+q^{y}\leq q^{x+y}+1$ for any $0\leq x,y\leq k-1$, the LHS is no more than
	\begin{equation*}
		{k\brack 1}\frac{q^{x+y}-1}{q-1}+{k\brack 1}-{x+y\brack 1}
		=q{x+y\brack 1}{k-1\brack 1}+{k\brack 1}\leq {k-1\brack 1}\left({k\brack 1}+q\right).
	\end{equation*}
	If  $x+y=k$, then  $x\geq 1$. Note that $q^{x}+q^{k-x}\leq q^{k-1}+q$ for any $1\leq x\leq k-1$, and equality holds if and only if $x=1$ or $x=k-1$.  Hence the LHS is no more than $ {k\brack 1}\frac{q^{k-1}+q-2}{q-1} ={k-1\brack 1}\left({k\brack 1}+q\right)+1$, and equality holds if and only if $(x,y)=(1,k-1)$ or $(x,y)=(k-1,1)$. The desired result follows.
\end{proof}

\begin{lem}\label{2605292}
	Let $k\geq s+2$. For any $0\leq w\leq s-1$ and $s\leq x,y\leq k-1$ with $x+y-w\leq k$, the function  
	$$q^{w+s}{k-s\brack 1}\left({x-w\brack 1}+{y-w\brack 1}\right)+q^{s}\left( {k\brack 1}-{x\brack 1}-{y\brack 1}+{w\brack 1}\right)$$
	is no more than $q^{2s-1}{k-s\brack 1}\left({k-s\brack 1}+q \right)$.
\end{lem}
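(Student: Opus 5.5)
The plan is to reparametrize and reduce the claim to an identity at one extreme point. Write $[m]={m\brack 1}=\frac{q^{m}-1}{q-1}$ and $K={k-s\brack 1}$. Set $a=x-w$ and $b=y-w$. The hypotheses give $a,b\geq s-w\geq 1$, $a,b\leq k-1-w$ and $a+b\leq k-w$.

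\emph{Step 1: rewrite the function.} Use the identities $[x]=[w]+q^{w}[a]$ and $[y]=[w]+q^{w}[b]$. Then
$$[k]-[x]-[y]+[w]=[k]-[w]-q^{w}\bigl([a]+[b]\bigr),$$
so the function becomes
$$g=q^{w+s}(K-1)\bigl([a]+[b]\bigr)+q^{s}\bigl([k]-[w]\bigr).$$
Since $K\geq 1$, for fixed $w$ the quantity $g$ is nondecreasing in $[a]+[b]$.

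\emph{Step 2: maximize over $a,b$.} By convexity of $m\mapsto [m]$, for fixed $w$ the sum $[a]+[b]$ under the constraints is largest at an extreme point. Take one variable as small as possible and the other as large as allowed, namely $a=s-w$ and $b=k-s$. This is admissible, because $b=k-s\leq k-1-w$ follows from $s-w\geq 1$. Since $[\cdot]$ is increasing, we may first assume $a+b=k-w$, and then this extreme choice applies. Hence
$$g\leq G(w):=q^{w+s}(K-1)\bigl([s-w]+K\bigr)+q^{s}\bigl([k]-[w]\bigr).$$

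\emph{Step 3: monotonicity in $w$.} Use $q^{w}[s-w]=\frac{q^{s}-q^{w}}{q-1}$ and $[w]=\frac{q^{w}-1}{q-1}$. Then $G(w)$ equals a constant plus
$$q^{s+w}\Bigl((K-1)\bigl(K-\tfrac{1}{q-1}\bigr)-\tfrac{1}{q-1}\Bigr).$$
Since $k-s\geq 2$ we have $K\geq q+1$, so the bracket is at least $q\bigl(q+1-\frac{1}{q-1}\bigr)-\frac{1}{q-1}>0$. Thus $G$ is increasing in $w$, and $G(w)\leq G(s-1)$ for $0\leq w\leq s-1$.

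\emph{Step 4: evaluate at the endpoint.} At $w=s-1$,
$$G(s-1)=q^{2s-1}(K^{2}-1)+q^{s}\bigl([k]-[s-1]\bigr).$$
Moreover $[k]-[s-1]=q^{s-1}[k-s+1]=q^{s-1}(qK+1)$. Therefore
$$G(s-1)=q^{2s-1}\bigl(K^{2}-1+qK+1\bigr)=q^{2s-1}K(K+q),$$
which is exactly the claimed bound.

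The only step needing real care is the convexity maximization in Step 2. One must check that the extreme point respects all the box constraints $s-w\leq a,b\leq k-1-w$, not just $a+b\leq k-w$. The remaining steps are routine algebra.
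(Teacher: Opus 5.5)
Your proposal is correct and follows essentially the same route as the paper. The paper also rewrites the function as $q^{w+s}{k-w\brack 1}+q^{w+s}\bigl({k-s\brack 1}-1\bigr)\frac{q^{x-w}+q^{y-w}-2}{q-1}$, bounds $q^{x-w}+q^{y-w}\leq q^{k-s}+q^{s-w}$ by the same endpoint/convexity argument, and then shows the resulting bound $q^{s}{k-s\brack 1}\bigl(q^{w}{k-s\brack 1}+q^{w+1}{s-w\brack 1}\bigr)$ is increasing in $w$ (increment $q^{w}(q^{k-s}-q-1)\geq 0$, the same computation as your coefficient of $q^{s+w}$) and equals $q^{2s-1}{k-s\brack 1}\bigl({k-s\brack 1}+q\bigr)$ at $w=s-1$.
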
 
\begin{proof}
	In the remaining of this proof, write the function as $g(k,s,w,x,y)$. Then 
	$$g(k,s,w,x,y)= q^{w+s}{k-w\brack 1}+q^{w+s}\left({k-s\brack 1}-1\right)\left(\frac{q^{x-w}+q^{y-w}-2}{q-1}\right).$$
 Note that $(x-w)+(y-w)\leq k-w$ and  $s-w\leq x-w\leq k-s$.  We  conclude   
	$$q^{x-w}+q^{y-w}\leq q^{x-w}+q^{(k-w)-(x-w)}\leq q^{k-s}+q^{s-w}. $$
This together with ${k-w\brack 1}=q^{k-s}{s-w\brack 1}+{k-s\brack 1}$ yields 
	\begin{equation*}
			\begin{aligned}
			g(k,s,w,x,y)&\leq q^{w+s}{k-w\brack 1}+q^{w+s}\left({k-s\brack 1}-1\right)\left({k-s\brack 1}+{s-w\brack 1}\right)\\
			&=q^{s}{k-s\brack 1}\left( q^{w}{k-s\brack 1}+q^{w+1}{s-w\brack 1}\right).
		\end{aligned}
	\end{equation*}
It is sufficient to show $q^{w}{k-s\brack 1}+q^{w+1}{s-w\brack 1}\leq q^{s-1}{k-s\brack 1}+q^{s}$. 
	
	If $s=1$, then the desired result is clear. Suppose  $s\geq 2$. For any $0\leq w\leq s-2$, we have
	\begin{equation*}
		\begin{aligned}
			q^{w+1}{k-s\brack 1}+q^{w+2}{s-w-1\brack 1}-q^{w}{k-s\brack 1}-q^{w+1}{s-w\brack 1}
			= q^{w}(q^{k-s}-q-1)\geq 0.
		\end{aligned}
	\end{equation*}
	It follows that $q^{w}{k-s\brack 1}+q^{w+1}{s-w\brack 1}$ attains maximum value if $w=s-1$, as required. 
\end{proof}

\medskip
\noindent{\bf Conflict of interest.}	
We have no known financial and personal relationships with other people or organizations that could potentially 
influence the work in this paper.

\medskip
\noindent{\bf Acknowledgment.}	
K. Wang is supported by the National Natural Science Foundation of \\China (12131011, 12571347) and Beijing Natural Science Foundation (1252010, 1262010). T. Yao is supported by   Natural Science Foundation of Henan (262300422621).

\medskip
\noindent{\bf Data availability.}	
No data was used for the research described in this paper.

\end{document}